\numberwithin{equation}{section}
\newtheorem{thm}{Theorem}[section]
\newtheorem{cor}{Corollary}[section]
\newtheorem{pro}{Proposition}[section]
\newtheorem{rem}{Remark}[section]
\newtheorem{lemma}{Lemma}[section]
\begin{document}
\markboth{R. Rajkumar and P. Devi}{Intersection graph of subgroups}
\title{\bf Toroidality and projective-planarity of intersection graphs of subgroups of finite groups}

\author{R. Rajkumar\footnote{e-mail: {\tt rrajmaths@yahoo.co.in}},\ \ \
P. Devi\footnote{e-mail: {\tt pdevigri@gmail.com}}\\
{\footnotesize Department of Mathematics, The Gandhigram Rural Institute -- Deemed University,}\\ \footnotesize{Gandhigram -- 624 302, Tamil Nadu, India.}\\[3mm]
}
\date{}

\maketitle

\begin{abstract}
Let $G$ be a group. \textit{The intersection graph of subgroups of $G$}, denoted by $\mathscr{I}(G)$, is a graph with all the
proper subgroups of $G$ as its vertices and two distinct vertices  in $\mathscr{I}(G)$ are adjacent if and only if
the corresponding subgroups having a non-trivial intersection in $G$.
In this paper, we classify the finite groups whose intersection graph of subgroups are toroidal or projective-planar.
In addition, we classify the finite groups whose intersection graph of subgroups are one of bipartite, complete bipartite, tree, star graph,  unicyclic, acyclic,  cycle, path or totally disconnected. Also we classify the finite groups whose intersection graph of subgroups does not contain one of  $K_5$, $K_4$, $C_5$, $C_4$, $P_4$, $P_3$,  $P_2$, $K_{1,3}$, $K_{2,3}$ or $K_{1,4}$ as a subgraph.
We estimate the  girth of the intersection graph of subgroups of finite groups. Moreover, we characterize some finite groups by using their intersection graphs. Finally, we obtain the clique cover number of the intersection graph of subgroups of groups and show that intersection graph  of subgroups of groups are weakly $\alpha$-perfect.
\paragraph{Keywords:}Intersection graph, finite groups, genus, toroidal graph, nonorientable genus, projective-planar graph.
\paragraph{2010 Mathematics Subject Classification:} 05C25,  05C10, 05E15, 20E99.

\end{abstract}

\section{Introduction} \label{sec:int}
Let $\mathcal{F}=\{S_i~|~i\in I\}$ be an arbitrary family of sets. The intersection graph of $\mathcal{F}$ is a graph having the elements of $\mathcal{F}$ an
its vertices and two vertices $S_i$ and $S_j$ are adjacent if and only if $i\neq j$ and $S_i\cap S_j\neq \{\emptyset\}$. For the properties of these graphs and some
special class of intersection graphs, we refer the reader to \cite{mckee}. In the past fifty years,
it has been an interesting topic for mathematicians, when the members of $\mathcal{F}$ have some specific algebraic
structures. Especially, they investigate on the interplay between the algebraic properties of algebraic structures, and the graph theoretic properties of their
intersection graphs. In this direction, in 1964 Bosak~\cite{bosak} initiated the study of the intersection graphs of semigroups. Later,
Cs$\acute{a}$k$\acute{a}$ny and Poll$\acute{a}$k ~\cite{csak} defined the intersection graph of subgroups of a finite group.

Let $G$ be a group.  \textit{The intersection graph of subgroups of $G$}, denoted by $\mathscr{I}(G)$, is a graph with all the
proper subgroups of $G$ as its vertices and two distinct vertices  in $\mathscr{I}(G)$ are adjacent if and only if
the corresponding subgroups have a non-trivial intersection in $G$.

In~\cite{zelinka} Zelinka
made some investigations on the intersection graphs of subgroups of finite abelian groups.  Motivated by these, many authors have defined, and studied the intersecting graphs on several algebraic structures, viz., rings, vector spaces, modules, and contributed interesting results. See, for instance \cite{ akbari_2,  akbari_1,  akbari, Chakara, laison, Shen, yaraneri} and the references therein.

Embeddability of graphs, associated with algebraic structures, on topological surfaces is considered in several recent papers \cite{hung,   zoran, zoran1, raj2015}. Planarity of intersection graphs of subgroups finite groups were studied by Sel\c{c}¸uk Kayacan \emph{et al.} in \cite{sel},  and by H. Ahmedi \emph{et al.} in \cite{hadi}. Planarity of intersection graphs of ideals of rings, and submodules of modules were studied in \cite{jafari1, yaraneri}.

  A natural question arise in this direction is the following: Which groups have their intersection graph of subgroups is of genus one, that is toroidal, or of nonorientable genus one, that is projective-planar ?. In this paper, we answer this question in the case of finite groups by classifying the finite groups whose intersection graph of subgroups is toroidal or projective-planar (see Theorem~\ref{intersecting graph t12} in Section~\ref{sec: 5} below).  As a consequence of this research, we also classify finite groups whose intersection graph of subgroups is in some class of graphs (see Theorem ~\ref{intersecting graph t233} and Corollary~\ref{intersecting graph c1} in Section~\ref{sec: 5} below) and characterize some finite groups by using their intersection graphs (see Corollary~\ref{intersecting graph c2} in Section~\ref{sec: 5} below), which are some of the main applications of these results for the group theory. Also we estimate the grith of the intersection graph of finite groups. Finally, we obtain the clique covering number of the intersection graph of subgroups of groups and show that intersection graph  of subgroups of groups are weakly $\alpha$-perfect.

\section{Preliminaries and notations}\label{sec:pre}
In this section, we first recall some notation, and results in graph theory, which are used later in the subsequent sections. We use standard basic graph theory terminology and notation (e.g., see \cite{arthur}).
Let $G$ be a simple graph with a vertex set $V$ and an edge set $E$.
 $G$ is said to be \textit{bipartite} if $V$ can be partitioned into two subsets $V_1$ and $V_2$ such that every edge of $G$ joins a vertex of $V_1$ to a vertex of $V_2$. Then $(V_1, V_2)$ is called a \textit{bipartition} of $G$.
Moreover, if every vertex of $V_1$ is adjacent to every vertex of $V_2$, then $G$ is called \textit{complete bipartite} and is denoted by $K_{m,n}$, where $|V_1|=m$, $|V_2|=n$. In particular, $K_{1,n}$ is a \emph{star graph}. $G$ is said to be \emph{complete} if each pair of distinct vertices in $G$ are adjacent.  The complete graph on $n$ vertices is denoted by $K_n$. $G$ is said to be \textit{totally disconnected} if its edge set is empty.  A \textit{path} connecting two vertices $u$ and $v$ in $G$ is a finite sequence  $(u=) v_0, v_1, \ldots, v_n (=v)$ of distinct vertices (except,
possibly, $u$ and $v$) such that $u_i$ is adjacent to $u_{i+1}$ for all $i=0, 1, \ldots , n-1$.
A path is a \emph{cycle} if $u=v$. The length of a path or a cycle is the number of edges in it. A path or a cycle of length $n$ is denoted by $P_n$ or $C_n$, respectively. A graph is \emph{unicyclic} if it has exactly one cycle, and is \emph{acyclic} if it has no cycles. The \emph{girth} of a graph $G$, denoted by $girth(G)$, is the length of the shortest cycle in $G$, if it exist; otherwise $girth(G) = \infty$.

We define a graph $G$ to be $X$-\textit{free} if $G$ has no subgraph isomorphic to a given graph $X$. An \emph{independent set} of a graph $G$ is a subset of the vertices of $G$ such that no two vertices in the subset are adjacent. The \emph{independence
number} of $G$, denoted by $\alpha(G)$, is the cardinality of a maximum independent set of $G$. A \emph{clique} of a graph $G$ is a complete subgraph of $G$. The \emph{clique cover number} of $G$, denoted by $\theta(G)$, is the minimum number of cliques in $G$ which cover all the vertices of $G$ (not necessarily all the edges of $G$). $G$ said to be \emph{weakly $\alpha$-perfect} if $\alpha(G)=\theta(G)$. For two graphs $G$ and $H$, $G \cup H$ denotes  disjoint union of $G$ and $H$, $G+H$ denotes a graph with the vertex set consist of vertices of $G$ and $H$ and edge set having all the lines
joining vertices of $G$ to vertices of $H$. $\overline{G}$ denotes the complement of a graph $G$, and, for an integer $n\ge 1$, $nG$ denotes the graph having $q$ disjoint copies of $G$.

Let $\mathbb{S}_n$ denote the surface obtained from the sphere by attaching $n$ handles. A graph is said to be \textit{embeddable} on a topological surface if it can be drawn on the surface such that no two edges cross.
The (orientable) \textit{genus} of a graph $G$, denoted by $\gamma(G)$, is the smallest non-negative integer $n$ such that $G$ can be embedded on $\mathbb{S}_n$.
$G$ is \textit{planar} if $\gamma(G)=0$ and \textit{toroidal} if $\gamma(G)=1$.
A crosscap is a circle (on the surface) such that all its pairs of opposite points are identified, and
the interior of this circuit is removed. Let $\mathbb{N}_k$ denote the sphere with $k$ added crosscaps.
For non-orientable topological surfaces (e.g., the projective plane, Klein bottle, etc.),
the \textit{nonorientable genus} of $G$ is the smallest integer $k$ such that $G$ can be embedded on $\mathbb{N}_k$, and it is denoted by $\overline{\gamma}(G)$.
$\mathbb{N}_1$ is the projective plane. Respectively, a graph $G$ is \textit{projective-planar} if $\overline{\gamma}(G) =1$.

A \textit{topological obstruction} for a surface is a graph $G$ of minimum vertex degree at least three such that $G$ does not embed on the surface,
but $G-e$ is embeddable on the surface for every edge $e$ of $G$. A \emph{minor-order obstruction} $G$ is a topological obstruction with the additional
property that, for each edge $e$ of $G$, $G$ with the edge $e$ contracted embeds on the surface.

The following results are used in the subsequent sections.

\begin{thm}\label{genus 110}(\cite[Theorems 6.37, 6.38 and 11.19, 11.23]{arthur})
\
\begin{enumerate}[{\normalfont (1)}]
\item $\gamma(K_n)=\displaystyle\left\lceil \frac{(n-3)(n-4)}{12}\right\rceil$, $n\geq 3$;

    $\gamma(K_{m,n})=\displaystyle\left\lceil \frac{(m-2)(n-2)}{4}\right\rceil$, $m$, $n\geq 2$.
\item $\overline{\gamma}(K_n)=\left\{
   	\begin{array}{ll}
   		\displaystyle\left\lceil \frac{(n-3)(n-4)}{6} \right\rceil, & \mbox{~~if~ } n\geq 3, n\neq 7;  \\
   		3, & \mbox{~~if~ } n=7;
   	\end{array}
   \right.$

$\overline{\gamma}(K_{m,n})=\displaystyle\left\lceil \frac{(m-2)(n-2)}{2}\right\rceil$, $m$, $n\geq 2$.
\end{enumerate}
\end{thm}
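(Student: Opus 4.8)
The plan is to establish each of the four formulas by a lower bound coming from Euler's formula together with a matching upper bound coming from an explicit optimal embedding; the lower bounds are elementary, while the upper bounds are the deep constructive results of Ringel, Youngs and others, and that is where essentially all the difficulty lies.

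For the lower bounds, suppose a connected graph with $V$ vertices and $E$ edges is embedded on $\mathbb{S}_\gamma$ with $F$ faces (we may take the embedding to be $2$-cell, since a minimum-genus embedding always is), so that $V-E+F=2-2\gamma$. Since each face boundary uses at least three edges and each edge lies on exactly two face boundaries, $3F\le 2E$, hence $2\gamma = 2-V+E-F \ge 2-V+E/3$, i.e. $\gamma \ge 1 - V/2 + E/6$. Putting $V=n$, $E=\binom{n}{2}$ gives $\gamma(K_n)\ge (n-3)(n-4)/12$, and as the genus is an integer we may replace the right-hand side by its ceiling. For $K_{m,n}$ the graph is bipartite and hence triangle-free, so each face boundary uses at least four edges, $4F\le 2E$, and the same manipulation with $V=m+n$, $E=mn$ gives $\gamma(K_{m,n})\ge (m-2)(n-2)/4$. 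Repeating the computation with the nonorientable Euler relation $V-E+F=2-\overline{\gamma}$ in place of $V-E+F=2-2\gamma$ yields $\overline{\gamma}(K_n)\ge (n-3)(n-4)/6$ and $\overline{\gamma}(K_{m,n})\ge (m-2)(n-2)/2$.

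For the upper bounds one must exhibit, for each admissible $n$ (resp. $m,n$), an embedding meeting the bound, which for $K_n$ amounts to a (near-)triangulation and for $K_{m,n}$ to a (near-)quadrangulation. The standard device is the rotation-system description of embeddings: an embedding on an orientable surface is encoded by a cyclic ordering of the incident edges at each vertex, face boundaries are read off by the usual face-tracing rule, and the genus is then computed from Euler's formula; on nonorientable surfaces one decorates the rotation system with edge signs. The optimal rotation systems for $K_n$ are produced by the current-graph method, splitting into the twelve residue classes of $n$ modulo $12$ (the Ringel--Youngs solution of the Heawood map-colouring problem), several of which require additional adjacencies or more intricate cascades; for $K_{m,n}$ Ringel gave a direct construction of quadrilateral embeddings, and the nonorientable cases run in parallel.

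The main obstacle is precisely this constructive part: it is not a routine calculation but the full Map Colour Theorem, and each residue class demands its own current-graph construction. One must also handle the genuine exception $n=7$ in the nonorientable case: here the Euler bound only gives $\overline{\gamma}(K_7)\ge 2$, and although Euler's formula is by itself consistent with a triangular embedding of $K_7$ on the Klein bottle ($14$ triangular faces), a finer combinatorial argument (due to Franklin) rules such an embedding out, so $\overline{\gamma}(K_7)\ge 3$, and an explicit embedding on $\mathbb{N}_3$ gives equality. Since all of this is classical and only the stated formulas are needed in the sequel, in the paper we simply invoke \cite{arthur}.
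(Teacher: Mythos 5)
The paper does not prove this theorem at all — it is quoted verbatim from White \cite{arthur} (and ultimately rests on the Ringel--Youngs solution of the Heawood problem), which is exactly what you acknowledge by deferring the constructive upper bounds to the literature. Your Euler-formula lower bounds, the identification of the current-graph constructions as the real content, and the Franklin exception at $n=7$ are all correct, so your outline is an accurate account of the standard proof and is consistent with the paper's treatment by citation.
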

By Theorem~\ref{genus 110}, one can see that $\gamma(K_n)>1$ for $n\geq 8$, $\overline{\gamma}(K_n)>1$ for $n\geq 7$, $\gamma(K_{m,n})>1$  if either $m\geq4$, $n\geq 5$ or $m\geq 3$, $n\geq 7$, and $\overline{\gamma}(K_{m,n})>1$ if either $m\geq3$, $n\geq5$ or $m=n=4$. Neufeld and Myrvold \cite{pra} have shown the following.

\begin{thm}(\cite[Figure 4, p. 578]{pra})\label{genus 501}
There are exactly three eight-vertex obstructions $\mathcal{A}_1, \mathcal{A}_2, \mathcal{A}_3$ for the torus, each of them being topological and minor-order (see Figure~\ref{fig:f2}).
\end{thm}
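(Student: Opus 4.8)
The statement is Neufeld and Myrvold's complete enumeration of the minor-minimal torus obstructions on exactly eight vertices, so the only route I see is an exhaustive, isomorphism-free search backed by a reliable torus-embeddability test; here is how I would organize it. The plan is first to bound the search space: a simple graph embeddable on the torus satisfies $|E|\le 3|V|$ (from $V-E+F=0$ together with $2E\ge 3F$), so a toroidal graph on $8$ vertices has at most $24$ edges, and hence any topological obstruction $G$ for the torus on $8$ vertices has at most $25$ edges, because $G-e$ must be toroidal for every edge $e$; the defining minimum-degree-at-least-three condition gives $|E(G)|\ge 12$. One can also assume $G$ is $2$-connected, since a cut vertex would let one treat the blocks separately and the genus is additive over blocks. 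With the vertex and edge counts bounded, only finitely many candidates remain, and they can be generated one per isomorphism class by canonical augmentation (for instance with McKay's \texttt{nauty}).

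The second step is, for each candidate $G$, to decide whether $\gamma(G)\le 1$: this can be carried out with a polynomial-time torus-embedding algorithm, or, exploiting the small number of vertices, by checking $G$ against the explicitly known forbidden substructures for the torus at these orders, or by analysing directly how a maximal planar subgraph of $G$ together with the remaining edges can be routed through a single handle. Discard every $G$ that embeds. The third step is to retain $G$ exactly when $G-e$ is toroidal for every edge $e$ (the topological-obstruction condition), and then, for each survivor, to verify in addition that $G/e$ is toroidal for every edge $e$ (the minor-order condition); the assertion is that on eight vertices these two classes coincide. Sorting the survivors into isomorphism classes should leave precisely the three graphs $\mathcal{A}_1,\mathcal{A}_2,\mathcal{A}_3$ shown in Figure~\ref{fig:f2}.

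The principal obstacle is that, unlike planarity with its Kuratowski--Wagner certificate, the torus admits no short forbidden-minor list to appeal to a priori, so the crux of the argument is a trustworthy embeddability oracle together with exhaustive, duplicate-free bookkeeping over the large family of candidate graphs --- work that genuinely requires machine assistance. This is why the result is quoted from \cite{pra} rather than reproved here.
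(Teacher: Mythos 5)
The paper gives no proof of this statement: it is imported verbatim from Neufeld and Myrvold \cite{pra}, so there is no argument of the authors' to compare yours against. Your sketch is a fair account of the only realistic route to such a result, and your bookkeeping is sound: a cellular embedding on the torus gives $V-E+F=0$, which with $2E\ge 3F$ yields $E\le 3V=24$ for a toroidal simple graph on $8$ vertices, hence at most $25$ edges for a topological obstruction (since $G-e$ must embed), and the minimum-degree condition gives $E\ge 12$; the reduction to $2$-connected candidates is justified by additivity of orientable genus over blocks. The remaining steps --- isomorph-free generation, an embeddability oracle, and the deletion/contraction checks --- are exactly the kind of machine-assisted enumeration that Neufeld and Myrvold carried out (their contribution was precisely a practical toroidality-testing algorithm, which they then ran over the candidate space). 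You correctly identify that no short Kuratowski-style certificate is available for the torus, and you rightly conclude that the theorem is cited rather than reproved; that is also how the paper treats it. In short, your proposal is correct in spirit and contains no false step, but it is a verification plan rather than a human-checkable proof, which is unavoidable for this statement.
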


\begin{figure}[ ht ]
 \begin{center}
 \includegraphics[scale=.6]{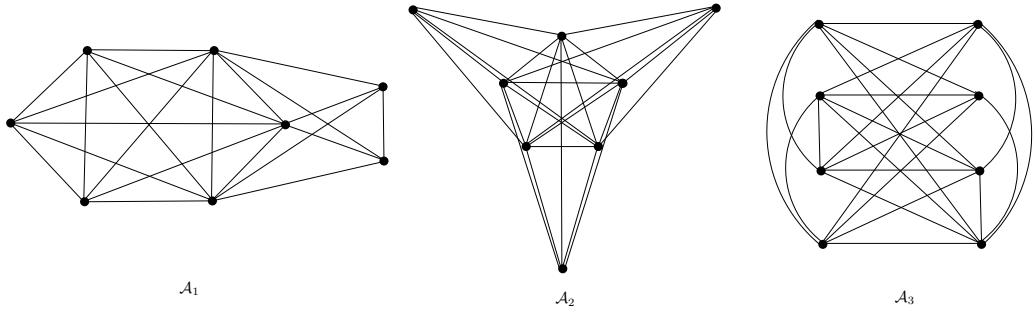}
 \caption{The eight vertex obstructions for the torus.}
\label{fig:f2}
 \end{center}
 \end{figure}

Note that the graph $\mathcal {A}_1$ in Figure~\ref{fig:f2} has $K_{3,5}$ as a subgraph, so it follows that if a graph $G$ has $\mathcal {A}_1$ as a subgraph, then $\gamma(G)>1$ and $\overline{\gamma}(G)>1$. In this paper, we repeatedly use this fact.

Gagarin \emph{et al.}~\cite{and} have found all the toroidal obstructions for the graphs containing no subdivisions of $K_{3,3}$ as a subgraph. These graphs coincide with the graphs containing no $K_{3,3}$-minors and are called \emph{with no $K_{3,3}$'s}.

\begin{thm}(\cite[Theorem~3, p. 3628]{and}) \label{genus 500}
There are exactly four minor-order obstructions with no $K_{3,3}$'s for the torus, precisely,
$\mathcal{B}_1,\mathcal{B}_2,\mathcal{B}_3,\mathcal{B}_4$ shown in Figure~\ref{fig:int f3}
as a minor.
\end{thm}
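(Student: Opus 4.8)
Since Theorem~\ref{genus 500} is quoted from Gagarin, Myrvold and Chambers, any proof rests on the structure theory of $K_{3,3}$-minor-free graphs, and the plan is to organize it around that. First I would invoke Wagner's decomposition theorem: every graph with no $K_{3,3}$-minor is obtained from planar graphs and copies of $K_5$ by repeatedly forming clique-sums over cliques of order at most $2$ (that is, $0$-sums, $1$-sums/vertex amalgamations, and $2$-sums/edge amalgamations). This exhibits any such graph as a tree of pieces, each piece being planar or a copy of $K_5$.

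Second, I would establish additivity of the orientable genus under these small clique-sums. Over $1$-sums the genus is exactly the sum of the genera of the blocks (Battle--Harary--Kodama--Youngs). Over $2$-sums one needs the sharper statement that the genus of an edge-amalgamation of $G_1$ and $G_2$ equals $\gamma(G_1)+\gamma(G_2)$; this must be handled with care, since in general an amalgamation can drop the genus, but for the pieces occurring here (planar graphs and $K_5$, with $\gamma(K_5)=1$) one can control the rotation system around the identified edge. The consequence is that for a $K_{3,3}$-minor-free graph $G$, the value $\gamma(G)$ equals the number of $K_5$-pieces appearing in its Wagner decomposition; in particular $\gamma(G)\le 1$ precisely when at most one such piece occurs, and $G$ fails to be toroidal precisely when at least two $K_5$-pieces occur.

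Third, I would turn minor-minimality into a finite search. A minor-order obstruction $G$ for the torus with no $K_{3,3}$'s must, by the previous step, have exactly two $K_5$-pieces in its decomposition (two are forced for $\gamma\ge 2$, and any further piece or superfluous internal structure could be deleted or contracted, contradicting minimality). Each planar connecting piece, being minimal, must degenerate to a single edge or vertex. Hence up to minor-equivalence $G$ lies in a short explicit list of graphs formed from two copies of $K_5$ joined by a $0$-, $1$-, or $2$-sum, possibly with a trivial planar connector. Enumerating these finitely many candidates, discarding those that do embed on the torus or that are not minor-minimal, and checking the survivors, yields exactly $\mathcal{B}_1,\mathcal{B}_2,\mathcal{B}_3,\mathcal{B}_4$; the non-embeddability and minor-minimality of each survivor is a finite verification.

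The main obstacle is the second step: genus is \emph{not} additive over $2$-amalgamations in general, so identifying precisely when additivity holds — and thereby showing that no genus is lost when two copies of $K_5$ are glued along an edge — is the delicate technical core. Once additivity in this restricted setting is secured, the reduction to a bounded list of candidates and the ensuing case analysis are routine, if tedious, and can in practice be assisted by computer, as in \cite{and}.
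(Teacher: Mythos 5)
You should first note that the paper contains no proof of this statement at all: Theorem~\ref{genus 500} is quoted, with attribution, from Gagarin, Myrvold and Chambers \cite{and}, so there is no internal argument to compare yours against. Your outline does follow the broad strategy of the cited source --- decompose $K_{3,3}$-minor-free graphs into planar pieces and copies of $K_5$ via small clique-sums, control the genus across the decomposition, and reduce to a finite check --- so the framework is the right one, and your first step (Wagner's theorem) and the additivity of genus over $0$- and $1$-sums (Battle--Harary--Kodama--Youngs) are correctly invoked.

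The genuine gap is at the step you yourself flag as the technical core, and it is not removable within your scheme. The lemma you rely on --- that $\gamma(G)$ equals the number of $K_5$-pieces in the Wagner decomposition, i.e.\ that genus is additive over the $2$-sums occurring there --- is asserted rather than proved, and it is false as a general principle: by Decker--Glover--Huneke, the genus of a $2$-amalgamation can drop below the sum, and whether two $K_5$-pieces can share a handle depends delicately on how they attach to the intervening planar structure. One can see concretely that your enumeration cannot be complete: from ``two copies of $K_5$ joined by a $0$-, $1$-, or $2$-sum with trivial planar connectors'' you obtain at most three pairwise minor-incomparable non-toroidal candidates, namely $2K_5$, the vertex-amalgamation of two $K_5$'s, and at most one of the two edge-amalgamation variants (the variant with the shared edge deleted is a subgraph of the variant with it kept, so they cannot both be minor-minimal). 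Since the theorem asserts exactly four obstructions, at least one of them must arise from two $K_5$-pieces joined through a planar piece that cannot be contracted away --- contrary to your claim that ``each planar connecting piece, being minimal, must degenerate to a single edge or vertex.'' This is visible even inside the present paper: $\mathcal{B}_3$ is exhibited (for the group $(\mathbb Z_p\times\mathbb Z_p)\rtimes\mathbb Z_p$) as a $K_5$ and a $K_4$ linked by additional edges, not as a clique-sum of two $K_5$'s. Closing the gap requires the actual structural criterion of \cite{and} (resting on the same authors' earlier characterization of toroidal graphs with no $K_{3,3}$'s), which determines precisely when two $K_5$-pieces force a second handle; your step two substitutes for that criterion an additivity statement that does not hold.
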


\begin{figure}[ ht ]
\begin{center}
\includegraphics[scale=.7]{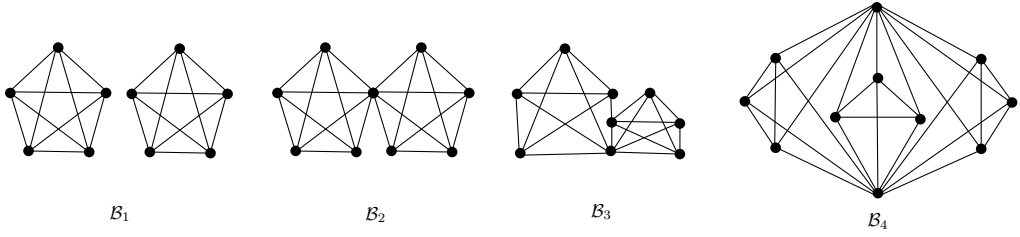}
\caption{The minor-order obstructions with no $K_{3,3}$'s for the torus.}
\label{fig:int f3}
\end{center}
\end{figure}
Notice that all the obstructions in Theorems~\ref{genus 501} and \ref{genus 500} are obstructions for toroidal graphs in general, which are very numerous (e.g., see \cite{and}).

In \cite{Chakara}, Chakarabarthy \emph{et al.}  proved the following results on the intersection graph of ideals of the ring $\mathbb Z_n$.

\begin{thm}(\cite{Chakara})\label{l1}
Let $p$, $q$, $r$ be distinct primes. Then
\begin{enumerate}[{\normalfont (1)}]
\item $\mathscr{I}(\mathbb Z_n)$ is planar if and only if $n$ is one of $p^\alpha (\alpha=2,3,4,5)$, $p^\alpha q (\alpha=1, 2)$, or $pqr$.
\item $\mathscr{I}(\mathbb Z_n)$ is $K_5$-free if and only if it is planar.
\item $\mathscr{I}(\mathbb Z_n)$ is bipartite if and only if $n$ is either $p^3$ or $pq$.
\item $\mathscr{I}(\mathbb Z_n)$ is $C_3$-free if and only if $n$ is either $p^\alpha$ $(\alpha=2,3)$ or $pq$.
\end{enumerate}
\end{thm}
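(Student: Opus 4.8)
The plan is to replace $\mathscr{I}(\mathbb{Z}_n)$ by an explicit combinatorial model and then run a short case analysis on the prime factorisation of $n$. The ideals of $\mathbb{Z}_n$ are precisely $I_d:=d\,\mathbb{Z}_n$ for $d\mid n$; $I_d$ is proper and nonzero iff $1<d<n$, and $I_d\cap I_e=I_{\operatorname{lcm}(d,e)}$, so $\mathscr{I}(\mathbb{Z}_n)$ is isomorphic to the graph $\Gamma_n$ with vertex set $\{\,d:d\mid n,\ 1<d<n\,\}$ and $d\sim e$ iff $\operatorname{lcm}(d,e)\neq n$. The single fact that does almost all of the work is: \emph{if $m\mid n$ and $m<n$, then the divisors of $m$ exceeding $1$ form a clique of $\Gamma_n$} (their pairwise least common multiples divide $m<n$), whence $\Gamma_n\supseteq K_{\tau(m)-1}$ with $\tau$ the divisor-counting function. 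I would also record outright $\Gamma_{p^2}=K_1$, $\Gamma_{p^3}=K_2$, $\Gamma_{p^4}=K_3$, $\Gamma_{p^5}=K_4$ (the powers $p^1,\dots,p^{\alpha-1}$ are pairwise adjacent), $\Gamma_{pq}=2K_1$, $\Gamma_{p^2q}$ a four-vertex graph on $\{p,p^2,q,pq\}$, and $\Gamma_{pqr}$ a six-vertex, nine-edge graph on $\{p,q,r,pq,pr,qr\}$ in which only $p,q,r$ have degree exceeding $2$.

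For (1) and (2): each of these seven graphs has at most four vertices or, in the case $\Gamma_{pqr}$, contains no subdivision of $K_5$ or $K_{3,3}$ (a branch vertex of such a subdivision needs degree $\ge 4$, resp.\ $\ge 3$, but only three vertices qualify), so all seven are planar and $K_5$-free by Kuratowski's theorem --- this is the ``if'' direction of (1) and one half of (2). Conversely I would show $K_5\subseteq\Gamma_n$ for every $n$ not in the list, which is non-planar by Theorem~\ref{genus 110} and certainly not $K_5$-free, giving the ``only if'' direction of (1) and the implication ``$K_5$-free $\Rightarrow$ planar'' of (2) (its converse being trivial). The check uses only the clique fact: $n=p^{\alpha}$ with $\alpha\ge 6$ gives $\Gamma_n=K_{\alpha-1}\supseteq K_5$; $n=p^{\alpha}q$ with $\alpha\ge 3$ gives $m=p^{\alpha-1}q<n$ with $\tau(m)=2\alpha\ge 6$; $n=p^{\alpha}q^{\beta}$ with $\alpha,\beta\ge 2$ gives $m=p^2q<n$ with $\tau(m)=6$; and $n$ with at least three distinct prime divisors and $n\neq pqr$ gives $m=pqr<n$ with $\tau(m)=8$. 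Sorting the possibilities by the number of distinct primes dividing $n$ shows these four situations cover every $n$ outside the list.

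Parts (3) and (4) go the same way. The graphs $\Gamma_{p^2}=K_1$, $\Gamma_{p^3}=K_2$, $\Gamma_{pq}=2K_1$ are triangle-free; for any other $n$ with $\Gamma_n$ nonempty I would point to an explicit triangle --- $K_{\alpha-1}\supseteq C_3$ if $n=p^{\alpha}$, $\alpha\ge 4$; the triple $\{p,q,pq\}$ if $n$ has exactly the prime divisors $p,q$ and $n\neq pq$ (all three pairwise least common multiples equal $pq<n$); and $\{p,q,r\}$ if $n$ has three or more distinct prime divisors --- which proves (4). Then (3) follows: $\Gamma_{p^3}=K_2$ and $\Gamma_{pq}=2K_1$ are bipartite, whereas for every other $n$ the graph $\Gamma_n$ either is too small to split into two nonempty colour classes (the degenerate cases $n=1,p,p^2$) or contains $C_3$ by the step just completed, and hence is not bipartite.

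The hard part will not be any single deduction but verifying that the case split in (1) and (2) is genuinely exhaustive; the tightest spots to check by hand are $n=p^3q$, where $m=p^2q$ produces exactly a $K_5$, and $n=pqr$, the one listed value whose planarity is not visible from the vertex count and must be obtained through Kuratowski's theorem. The only other delicate point is that for the degenerate graphs $\Gamma_1,\Gamma_p,\Gamma_{p^2}$ one must read ``bipartite'' as in Section~\ref{sec:pre}, i.e.\ as requiring two nonempty classes, for the statement of (3) to hold verbatim.
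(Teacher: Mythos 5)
Your proposal is correct, but note that the paper does not prove this statement at all: Theorem~\ref{l1} is imported verbatim from Chakrabarty \emph{et al.}\ \cite{Chakara} and used as a black box, so there is no internal proof to compare against. Your divisor-lattice model is the right one and your argument is sound: identifying the proper nontrivial subgroups of $\mathbb Z_n$ with divisors $1<d<n$ and adjacency with $\operatorname{lcm}(d,e)\neq n$ is exactly the standard reduction, and the single lemma that the divisors $>1$ of a proper divisor $m$ of $n$ form a clique $K_{\tau(m)-1}$ does carry the whole ``only if'' direction of (1) and (2); I checked your four cases ($p^{\alpha}$ with $\alpha\ge 6$; $p^{\alpha}q$ with $\alpha\ge 3$ via $m=p^{\alpha-1}q$; $p^{\alpha}q^{\beta}$ with $\alpha,\beta\ge 2$ via $m=p^2q$; and at least three prime divisors with $n\neq pqr$ via $m=pqr$) and they are exhaustive and each yields a genuine $K_5$ subgraph, which is what (2) requires (a subdivision would not suffice there). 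Your explicit descriptions of the seven small graphs agree with the paper's own records \eqref{intersection graph e1}--\eqref{intersection graph e2} and Figure~\ref{int f1}(a), and the degree count ruling out $K_5$- and $K_{3,3}$-subdivisions in $\Gamma_{pqr}$ is valid. The only caveats are the ones you already flag yourself: the statement silently excludes the degenerate $n\in\{1,p\}$ (empty graph), and ``bipartite'' must be read as requiring two nonempty classes so that $\Gamma_{p^2}\cong K_1$ is excluded from (3); with those conventions your proof is complete.
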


In \cite{sel} Sel\c{c}¸uk Kayacan \emph{et al.} have classified all the finite groups whose intersection graphs of subgroups are planar.
\begin{thm}(\cite{sel})\label{1000}
Let $G$ be a finite group and $p$, $q$, $r$ be distinct primes. Then $\mathscr{I}(G)$ is planar if and only if $G$ is one of the following groups:
\begin{enumerate}[\normalfont (1)]
\item $\mathbb Z_{p^\alpha} (\alpha=2,3,4,5)$, $\mathbb Z_{p^\alpha q} (\alpha=1, 2)$, $\mathbb Z_{pqr}$, $\mathbb Z_p \times \mathbb Z_p$, $\mathbb Z_4 \times \mathbb Z_2$, $\mathbb Z_6\times \mathbb Z_2$;
\item $Q_8$, $M_8$, $\mathbb Z_q \rtimes \mathbb Z_p$, $\mathbb Z_q \rtimes_2 \mathbb Z_{p^2}$, $A_4$;
\item $\mathcal{G}_1 \cong \langle a, b, c~|~ a^p=b^p=c^q=1, ab=ba, cac^{-1}=b,
cbc^{-1}= a^1b^{l} \rangle$, where $\bigl(\begin{smallmatrix}
  0 & -1\\ 1 & l
\end{smallmatrix} \bigr)$ has order $q$ in $GL_2(p)$, where $q~|~(p+1)$;
\item $\mathcal{G}_2 \cong\langle a,b,c~|~a^p=b^p=c^{q^2}=1, ab=ba, cac^{-1}=b^{-1}, cbc^{-1}=a^1b^l\rangle$,
 where $\bigl(\begin{smallmatrix}
  0 & -1\\ 1 & l
\end{smallmatrix} \bigr)$ has order $q^2$ in $GL_2(p)$, where $q^2~|~(p+1)$;
\item $\mathcal{G}_3 \cong\langle a,b,c~|~a^p=b^q=c^r=1, b^{-1}ab=a^\mu, c^{-1}ac=a^v, bc=cb\rangle$, $q$, $r$ are divisors of $(p-1)$, $v$, $\mu\neq 1$.
\end{enumerate}
\end{thm}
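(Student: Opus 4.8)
The plan is to prove the two implications separately; the substance is the forward one, that planarity of $\mathscr{I}(G)$ pins $G$ down to the list, after which the converse is a finite verification. I would begin from two reductions. \emph{(R1)}: for $H\le G$, $\mathscr{I}(H)$ is an induced subgraph of $\mathscr{I}(G)$ — when $H<G$ every subgroup of $H$ is a proper subgroup of $G$, with the same adjacency relation — so planarity of $\mathscr{I}(G)$ passes to $\mathscr{I}(H)$ for all $H\le G$. \emph{(R2)}: if $1\neq N\trianglelefteq G$ with $N\neq G$, the subgroups $L$ with $N\le L\lneq G$ are vertices of $\mathscr{I}(G)$, pairwise adjacent (all contain $N$), and number $1+c(G/N)$, where $c(X)$ is the count of proper nontrivial subgroups of $X$; hence $c(G/N)\ge 4$ yields $K_5\subseteq\mathscr{I}(G)$, which is non-planar by Theorem~\ref{genus 110}. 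As the only finite groups with $c(\cdot)\le 3$ are $\mathbb Z_p,\mathbb Z_{p^2},\mathbb Z_{p^3},\mathbb Z_{p^4},\mathbb Z_{pq}$ and $\mathbb Z_2\times\mathbb Z_2$, (R2) says every proper nontrivial quotient of a planar $G$ is one of these (abelian) groups, so $G'\le N$ for each minimal normal $N\neq G$; this forces $G$ solvable (distinct minimal normal subgroups $N_1,N_2$ give $G'\le N_1\cap N_2=1$; a unique minimal normal subgroup is either all of $G$, i.e.\ $G$ is characteristically simple and hence $\mathbb Z_p^k$ — a power of a nonabelian simple group is ruled out since its intersection graph already contains $K_5$ — or equals $G'$, which is then abelian).

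\textbf{$p$-groups.} For $\mathbb Z_{p^n}$ the proper nontrivial subgroups form a chain, so $\mathscr{I}(\mathbb Z_{p^n})=K_{n-1}\cup K_1$, planar iff $n\le 5$. For non-cyclic $G$: if $G\cong\mathbb Z_p^{m}$ with $m\ge 3$ it contains $\mathbb Z_p^3$, whose $p^2+p+1\ge 7$ subgroups of order $p^2$ pairwise intersect, giving $K_7$; if $G$ is not elementary abelian and $|G|\ge p^4$, then $\Phi(G)\neq 1$, and three of the (at least $p+1$) maximal subgroups together with $\Phi(G)$ form a clique, which extends to $K_5$ by adjoining an order-$p$ subgroup of $\Phi(G)$ when $|\Phi(G)|\ge p^2$, and which already has size $\ge 8$ when $|\Phi(G)|=p$ (then $G/\Phi(G)$ has rank $\ge 3$, so there are $\ge 7$ maximal subgroups); and for $p$ odd and $|G|=p^3$ non-cyclic, $|\Phi(G)|=p$ and there are $p+1$ maximal subgroups, so $\mathscr{I}(G)\supseteq K_{p+2}\supseteq K_5$. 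This leaves $\mathbb Z_{p^\alpha}$ ($2\le\alpha\le5$), $\mathbb Z_p\times\mathbb Z_p$, $\mathbb Z_4\times\mathbb Z_2$, $Q_8$ and $M_8$, each of which is directly checked to have planar intersection graph (edgeless, a star, or $K_{\le 4}$ with pendant vertices).

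\textbf{Non-$p$-groups and the converse.} When $G$ has $\omega(|G|)\ge 2$ distinct prime divisors, (R1) confines every Sylow subgroup of $G$ to the $p$-group list above and (R2) confines every proper quotient to the seven small abelian groups. Feeding this into Sylow's theorem (the number of Sylow $p$-subgroups is $\equiv 1\pmod p$, and two distinct ones may still meet nontrivially), and working prime-power factor by prime-power factor, one checks that these restrictions can be met only by $\mathbb Z_{pq},\mathbb Z_{p^2q},\mathbb Z_6\times\mathbb Z_2,\mathbb Z_q\rtimes\mathbb Z_p,\mathbb Z_q\rtimes_2\mathbb Z_{p^2},A_4,\mathcal G_1,\mathcal G_2$ (all with $\omega(|G|)=2$) and $\mathbb Z_{pqr},\mathcal G_3$ (with $\omega(|G|)=3$); in particular $\omega(|G|)\ge 4$ is impossible, and in every excluded case a $K_5$ or a $K_{3,3}$ is exhibited. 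The converse direction is then completed by writing down, for each of the finitely many groups in the statement, its (small) subgroup lattice together with a planar embedding of $\mathscr{I}(G)$; Theorem~\ref{l1} disposes of the cyclic cases $\mathbb Z_{p^\alpha}$, $\mathbb Z_{p^\alpha q}$ and $\mathbb Z_{pqr}$.

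\textbf{Main obstacle.} The hard, bookkeeping-heavy step is the analysis when $\omega(|G|)=2$ and $G$ is a metabelian extension $V\rtimes C$ with $V$ elementary abelian and $C$ cyclic or elementary abelian: one must count the subgroups of $G$ exactly — the Sylow subgroups and their pairwise intersections, together with the $C$-invariant subgroups of $V$ — to decide whether five pairwise-intersecting subgroups exist. The subtle point is isolating the borderline families $\mathcal G_1$ and $\mathcal G_2$, where it is precisely the arithmetic requirement that the defining matrix have order $q$ (respectively $q^2$), equivalently $q\mid p+1$ (respectively $q^2\mid p+1$), that suppresses an additional subgroup and keeps $\mathscr{I}(G)$ planar; everywhere else the obstruction appears at once.
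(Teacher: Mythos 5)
You should first be aware that the paper does not prove this statement at all: Theorem~\ref{1000} is quoted verbatim from Kayacan--Yaraneri \cite{sel}, so the only in-paper material to measure your argument against is the parallel machinery of Sections~\ref{sec:3}--\ref{sec:4}, which carries out the same kind of classification for the toroidal and projective-planar cases. Against that benchmark, your skeleton is the right one and matches both \cite{sel} and this paper's own method: (R1) and (R2) are correct (modulo the cosmetic point that under this paper's convention the trivial subgroup is not a vertex, so $\mathscr{I}(\mathbb Z_{p^n})\cong K_{n-1}$ rather than $K_{n-1}\cup K_1$ --- harmless for planarity), your list of groups with at most three proper nontrivial subgroups is complete, and the $p$-group analysis via $\Phi(G)$ and the maximal subgroups is correct and genuinely covers all cases.

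There are, however, two places where you assert what must be proved, and both are where the actual content of the theorem lies. First, your reduction to solvable groups hinges on the parenthetical claim that a minimal normal subgroup which is a power of a nonabelian simple group ``is ruled out since its intersection graph already contains $K_5$.'' That is precisely the statement needing proof, and it is not cheap: the paper's own treatment of non-solvable groups in Section~\ref{sec:4b} (Proposition~\ref{intersecting graph t11}) establishes the analogous fact only by invoking Thompson's classification of minimal simple groups \cite{thomp} and exhibiting, inside each of $L_2(q^p)$, $L_3(3)$, $L_2(p)$ and $Sz(2^q)$, a subgroup already known to have $K_5$ in its intersection graph. A one-line dismissal here is a genuine gap. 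Second, the solvable non-$p$-group case --- ``one checks that these restrictions can be met only by\ldots'' --- is deferred entirely, yet it is exactly where $\mathbb Z_q\rtimes_2\mathbb Z_{p^2}$, $A_4$, $\mathcal G_1$, $\mathcal G_2$ and $\mathcal G_3$ emerge and everything else must be killed. The necessary conditions (R1) on Sylow subgroups and (R2) on quotients do not by themselves decide these cases: for many groups (compare the groups $G_2$ and $G_{5(t)}$ of order $p^2q$ in the proof of Proposition~\ref{intersecting graph t4}, or the order-$pqr$ analysis in Proposition~\ref{intersecting graph t8}) one must exhibit a $K_5$ or $K_{3,3}$ directly from a census of subgroups, and for the borderline families one must verify that no such configuration exists --- the arithmetic conditions $q\mid(p+1)$, $q^2\mid(p+1)$ you correctly flag as the crux. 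You have identified where the work is, but as written this is a proof strategy with its two hardest steps outstanding, not a proof.
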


The intersection graph of subgroups of the groups listed in Theorem~\ref{1000}, are given below for use in the subsequent sections.
\begin{equation}\label{intersection graph e1}
\mathscr{I}(\mathbb Z_{p^\alpha})\cong K_{\alpha-1}, \alpha=2, 3, 4, 5.
\end{equation}
\begin{equation}\label{intersection graph eqn2}
\mathscr{I}(\mathbb Z_{pq})\cong \overline{K}_2.
\end{equation}
\begin{equation}\label{intersection graph e2}
\mathscr{I}(\mathbb Z_{p^2q})\cong K_1+(K_2\cup K_1).
\end{equation}
\begin{equation}\label{e4}
\mathscr{I}(\mathbb Z_p\times \mathbb Z_p) \cong \overline{K}_{p+1}.
\end{equation}
\begin{equation}\label{5}
\mathscr{I}(\mathbb Z_4\times \mathbb Z_2) \cong K_1 +(K_4 \cup \overline{K}_2).
\end{equation}
\begin{equation}\label{23}
	\mathscr{I}(Q_8)\cong K_4.
\end{equation}
\begin{equation}\label{e8}
\mathscr{I}(\mathbb Z_q \rtimes \mathbb Z_p) \cong \overline{K}_{q+1}
\end{equation}
 \begin{equation}\label{e9}
 \mathscr{I}(\mathbb Z_q \rtimes_2 \mathbb Z_{p^2}) \cong K_1+ (K_1 \cup q K_2),
 \end{equation}
 \begin{equation}\label{e11}
 \Gamma(\mathcal{G}_1)\cong K_{1,p+1}\cup \overline{K}_{p^2}.
 \end{equation}
 \begin{equation}\label{e10}
 \mathscr{I}(A_4) \cong K_{1,3} \cup \overline{K}_4.
 \end{equation}
 \begin{equation}\label{33}
 \mathscr{I}(\mathcal{G}_2)\cong K_1+(K_{1,p+1}\cup p^2K_2).
 \end{equation}
 \begin{figure}[ ht ]
 \begin{center}
  \includegraphics[scale=.8]{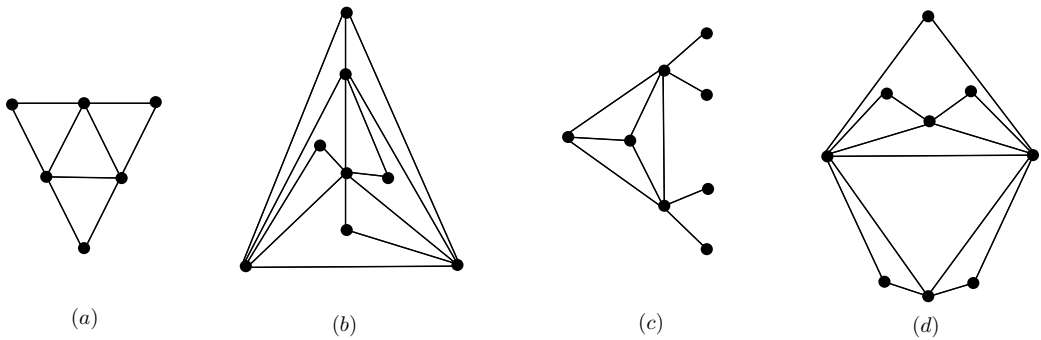}
  \caption{(a) $\mathscr{I}(\mathbb Z_{pqr})$, (b) $\mathscr{I}(\mathbb Z_6\times \mathbb Z_2)$, (c) $\mathscr{I}(M_8)$, (d) $\mathscr{I}(\mathcal{G}_3)$.}
  \label{int f1}
  \end{center}
  \end{figure}
We summarize some of the results of group theory which we use in forthcoming sections.
\begin{thm}\label{501}
\begin{enumerate}[\normalfont (i)]
\item (\cite[Theorem IV, p.129]{burn}) If $G$ is a $p$-group of order $p^n$, then the number of subgroups of order $p^s$, $1\leq s\leq n$ is congruent to 1 (mod p).
\item (\cite[Proposition 1.3]{scott}) If $G$ is a $p$-group of order $p^n$ and it has a unique subgroup of order $p^s$, $1<s\leq n$, then $G$ is cyclic or  $s=1$ and $p=2$, $G\cong Q_{2^\alpha}$.
\item (\cite[Corollary IV, p. 53]{burn}) If $G$ has a normal subgroup $H$ of order $mn$, where $m$ and $n$ are relatively prime and $N$ is a normal subgroup of $H$ of order $n$, then $N$ is also normal in $G$.
\end{enumerate}
\end{thm}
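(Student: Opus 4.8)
\medskip
\noindent\textit{Proof proposal.} All three assertions are classical facts about finite $p$-groups (and about how normality propagates along a normal Hall subgroup), which is why the paper quotes them with references; here is how one would establish each. For (i), the plan is to let $G$ act by conjugation on the set $\mathcal{S}_s$ of its subgroups of order $p^s$. As $G$ is a $p$-group, every orbit has $p$-power length, so $|\mathcal{S}_s|$ is congruent mod $p$ to the number of fixed points, i.e. to the number of \emph{normal} subgroups of order $p^s$; it therefore suffices to show the latter is $\equiv 1\pmod p$, which one would do by induction on $n$. The base case $s=1$ holds because a normal subgroup of order $p$ in a $p$-group is central, so these are precisely the order-$p$ subgroups of $Z(G)$, of which an abelian $p$-group has $\equiv 1\pmod p$ many. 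For the inductive step, use that every nontrivial normal subgroup $N$ satisfies $N\cap Z(G)\neq 1$, and double count the incident pairs $(Z,N)$ with $Z\le Z(G)$ of order $p$, $N$ normal in $G$ of order $p^s$, and $Z\le N$: for fixed $Z$ the admissible $N$ correspond to the normal subgroups of order $p^{s-1}$ of $G/Z$ (so their number is $\equiv 1\pmod p$ by induction), while for fixed $N$ the admissible $Z$ are the order-$p$ subgroups of the abelian group $N\cap Z(G)$ (so again $\equiv 1\pmod p$ in number); comparing the two counts modulo $p$ gives the result. This is in essence Burnside's original argument.

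For (ii), the strategy is to reduce to the case of a unique subgroup of order $p$. Suppose $G$ has a unique subgroup $H$ of order $p^s$ with $1\le s<n$. Using the standard fact that in a $p$-group every subgroup of order $p^i$ with $i<n$ is properly contained in one of order $p^{i+1}$ (because $H\subsetneq N_G(H)$), choose $L$ of order $p^{s+1}$ with $H\subsetneq L$; then $H$ is the unique maximal subgroup of $L$, so $L/\Phi(L)$ has order $p$ and hence $L$, and with it $H$, is cyclic. Thus $H$ has a unique subgroup of order $p$, and since every order-$p$ subgroup of $G$ lies inside some subgroup of order $p^s$ --- hence inside $H$ --- the group $G$ itself has a unique subgroup of order $p$. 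One then invokes the classical theorem that a finite $p$-group with a unique subgroup of order $p$ is cyclic, unless $p=2$ and $G\cong Q_{2^\alpha}$; that theorem one would prove by taking a maximal normal abelian subgroup $A$, observing that $A$ is cyclic and $C_G(A)=A$ so that $G/A$ embeds into $\operatorname{Aut}(A)$, and then analysing the finitely many possibilities --- which for $p=2$ is precisely where the generalized quaternion groups arise. Finally, a generalized quaternion group has more than one subgroup of order $p^2=4$, so the quaternion alternative cannot occur once $s\ge2$, and in that range $G$ must be cyclic.

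For (iii), the plan is to show that $N$ is the unique subgroup of $H$ of order $n$, so that $N$ is characteristic in $H$ and hence normal in $G$ (being characteristic in the normal subgroup $H$). Let $\pi$ be the set of primes dividing $n$; since $N$ is a normal $\pi$-subgroup of $H$, for any subgroup $N'$ of order $n$ the product $NN'$ is a subgroup, $NN'/N\cong N'/(N\cap N')$ is a $\pi$-group, so $NN'$ is a $\pi$-group whose order divides $|H|=mn$ with $\gcd(m,n)=1$; hence $|NN'|\mid n$, forcing $NN'=N$ and $N'=N$. The principal obstacle is that (i) and (ii) rely on genuinely substantial $p$-group theory --- Burnside's congruence for the number of subgroups of a prescribed order, and the classification of $p$-groups with a unique subgroup of order $p$ --- so a fully self-contained treatment would be lengthy; part (iii), by contrast, is immediate, which is why the statement as a whole is recorded here with references rather than reproved.
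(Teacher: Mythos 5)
The paper offers no proof of this theorem: all three parts are quoted as classical results with citations to Burnside and Scott, so there is no argument of the authors' to compare yours against. Your reconstructions are the standard proofs and are sound. For (i), the reduction via the conjugation action to counting \emph{normal} subgroups, followed by the double count of pairs $(Z,N)$ with $Z\le Z(G)$ of order $p$ and $Z\le N$, is exactly Burnside-style and works: both sides of the count are $\equiv 1 \pmod p$ term by term, using the induction on $|G|$ applied to $G/Z$ on one side and the nontriviality of $N\cap Z(G)$ on the other. For (ii), note that the paper's statement is garbled (with ``$1<s\le n$'' the clause ``$s=1$ and $G\cong Q_{2^\alpha}$'' is vacuous, and $s=n$ would make the claim false for $\mathbb Z_p\times\mathbb Z_p$); you have silently proved the correct statement, with $1\le s<n$. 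Your reduction is right, and the only step left slightly implicit is why the quaternion alternative is excluded for $s\ge 2$: one should say that the same ``growing'' argument places every subgroup of order $p^2$ inside the cyclic group $H$, so $G$ would have a unique subgroup of order $4$, which $Q_{2^\alpha}$ does not. Part (iii) is complete as written: uniqueness of $N$ among subgroups of order $n$ in $H$ makes it characteristic in $H$, hence normal in $G$. In short, the proposal correctly supplies proofs the paper deliberately omits.
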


\section{Finite abelian groups}\label{sec:3}
In this section, first we classify the finite abelian groups whose intersection graphs of subgroups are toroidal or projective-planar, and next we classify the finite non-cyclic abelian groups whose intersection graphs of subgroups are one of $K_5$ free, $C_3$-free, acyclic or bipartite.
\begin{pro}\label{intersecting graph t100}
Let $G$ be a finite cyclic group and $p$, $q$ be distinct primes. Then
\begin{enumerate}
\item $\mathscr{I}(G)$ is toroidal if and only if
$G$ is one of $\mathbb Z_{p^\alpha}(\alpha=6, 7, 8)$, $\mathbb Z_{p^\alpha q}(\alpha=3, 4)$ or $\mathbb Z_{p^2q^2}$;
\item $\mathscr{I}(G)$ is projective-planar if and only if $G$ is one of $\mathbb Z_{p^\alpha}$$(\alpha=6, 7)$ or $\mathbb Z_{p^3q}$.
\end{enumerate}
\end{pro}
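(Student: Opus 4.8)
The plan is to reduce everything to the divisor structure of $n=|G|$. Since $G$ is cyclic it has a unique subgroup of each order $d\mid n$, and two such subgroups intersect nontrivially iff $\gcd(d_1,d_2)>1$; hence $\mathscr{I}(G)$ is the graph on $\{d:d\mid n,\ 1<d<n\}$ with $d_1\sim d_2\iff\gcd(d_1,d_2)>1$, and in particular it depends only on the multiset of prime exponents of $n$. Two facts will carry most of the argument. First, if $m\mid n$ with $1<m<n$ then $\mathscr{I}(\mathbb Z_m)$ is an induced subgraph of $\mathscr{I}(\mathbb Z_n)$, so once $\mathscr{I}(\mathbb Z_m)$ is shown non‑toroidal (resp.\ non‑projective‑planar), the same holds for $\mathscr{I}(\mathbb Z_n)$. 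Second, $\mathscr{I}(\mathbb Z_{p^\alpha})\cong K_{\alpha-1}$ by \eqref{intersection graph e1}, and running through the divisor lattice one gets $\mathscr{I}(\mathbb Z_{p^\alpha q^\beta})\cong K_{\alpha\beta-1}+(K_\alpha\cup K_\beta)$ (agreeing with \eqref{intersection graph e2} when $(\alpha,\beta)=(2,1)$); this graph contains $K_{\alpha\beta+\alpha-1}$, and $K_{2\alpha-1}$ when $\beta=1$. Since a planar graph is neither toroidal nor projective‑planar, Theorem~\ref{1000} reduces the task to deciding, among the nonplanar cyclic groups, which have $\gamma=1$ and which have $\overline{\gamma}=1$.

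For the ``if'' direction I would go case by case. Theorem~\ref{genus 110} gives $\gamma(K_5)=\gamma(K_6)=\gamma(K_7)=1$, $\overline{\gamma}(K_5)=\overline{\gamma}(K_6)=1$ and $\overline{\gamma}(K_7)=3$, which with \eqref{intersection graph e1} settles $\mathbb Z_{p^\alpha}$ for $\alpha=6,7,8$ (and shows $\mathbb Z_{p^8}$ is toroidal but not projective‑planar). For $\mathbb Z_{p^3q}$ note $\mathscr{I}\cong K_2+(K_3\cup K_1)$ is a subgraph of $K_6$, so $\gamma,\overline{\gamma}\le1$, while it contains $K_5$, so is nonplanar, giving $\gamma=\overline{\gamma}=1$; similarly $\mathscr{I}(\mathbb Z_{p^2q^2})\cong K_3+2K_2$ is a subgraph of $K_7$ and nonplanar, so $\gamma=1$. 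The case $\mathscr{I}(\mathbb Z_{p^4q})\cong K_3+(K_4\cup K_1)$ has eight vertices, so (as $\gamma(K_8)=2$) it is not contained in any toroidal complete graph; instead I would embed it directly: a toroidal embedding of $K_7$ is necessarily a triangulation (the Euler formula forces every face to be a triangle), so relabel the vertices so that the three playing the role of the $K_3$‑part bound a face and insert the eighth vertex into that face joined to those three — this realizes $K_3+(K_4\cup K_1)$ on the torus, and it is nonplanar, so $\gamma=1$.

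For the ``only if'' direction I would use monotonicity to reduce to a short list of minimal nonplanar shapes that already fail. For toroidality these are $p^9$ ($\mathscr{I}\cong K_8$, $\gamma=2$), $p^5q$ ($\mathscr{I}\supseteq K_9$), $p^3q^2$ ($\mathscr{I}\supseteq K_8$), $p^2qr$ and $pqrs$; for the last two a direct count gives $(|V|,|E|)=(10,34)$ and $(14,66)$ respectively, so $|E|>3|V|$ and neither embeds on the torus. Every nonplanar cyclic $\mathbb Z_n$ not listed in (1) is a multiple of one of these five shapes, so (1) follows. For projective‑planarity the corresponding list is $p^8$ ($\mathscr{I}\cong K_7$, $\overline{\gamma}=3$), together with $p^4q$, $p^2qr$, $pqrs$ (all of which contain $K_7$), and $p^2q^2$, the one case below; every nonplanar $\mathbb Z_n$ not listed in (2) is a multiple of one of these, so (2) follows.

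The step I expect to be the real obstacle is showing $\mathscr{I}(\mathbb Z_{p^2q^2})\cong K_3+2K_2$ is \emph{not} projective‑planar: with seven vertices and seventeen edges it does not violate the Euler bound, its largest clique is $K_5$, and it is too small to contain any of the complete or complete bipartite graphs certified non‑projective‑planar by Theorem~\ref{genus 110}, so no ready‑made certificate applies. I would argue directly. Put $T=\{pq,p^2q,pq^2\}$; then $T\cup\{p,p^2\}$ and $T\cup\{q,q^2\}$ each induce a $K_5$, these two copies share exactly the triangle on $T$, and there are no edges between $\{p,p^2\}$ and $\{q,q^2\}$. Suppose $K_3+2K_2$ embeds in $\mathbb N_1$ and restrict the embedding to the $K_5$ on $T\cup\{p,p^2\}$; since $K_5$ is nonplanar this restriction is cellular, so its faces are open disks, and $q,q^2$ together with all their incident edges lie inside these faces. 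Because $q\sim q^2$, the two vertices lie in one common face $F$; because each of them is joined to all of $T$, the three vertices of $T$ lie on $\partial F$. But the three edges from $q$ to $T$ divide the closed disk $\overline F$ into three regions, each of which meets only two of the three vertices of $T$; since $q^2$ lies in one of these regions it cannot be joined to the third vertex of $T$ without a crossing — a contradiction. Hence $\overline{\gamma}(\mathscr{I}(\mathbb Z_{p^2q^2}))\ge 2$, and together with the two lists this completes the proof.
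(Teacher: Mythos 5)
Your overall strategy is sound and in several places genuinely different from (and cleaner than) the paper's. You work uniformly with the divisor graph, give the closed form $\mathscr{I}(\mathbb Z_{p^\alpha q^\beta})\cong K_{\alpha\beta-1}+(K_\alpha\cup K_\beta)$, and dispose of the shapes $p^2qr$ and $pqrs$ by the Euler bound $|E|\le 3|V|$ (your counts $34>30$ and $66>42$ check out), where the paper instead locates the eight-vertex toroidal obstruction $\mathcal A_1$ inside each graph. Your construction of a toroidal embedding of $K_3+(K_4\cup K_1)$ by noting that every toroidal embedding of $K_7$ is a triangulation (cellularity is guaranteed since the torus is the minimum-genus surface for $K_7$) and inserting the extra vertex into a triangular face is correct and replaces the paper's Figure~\ref{fig:int f4}. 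The reduction of the ``only if'' direction to five minimal shapes for the torus and five for the projective plane, via the induced-subgraph monotonicity for divisors, is complete.

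The one genuine gap is in your direct proof that $K_3+2K_2$ is not projective-planar, precisely the step the paper handles by citing the Glover--Huneke--Wang list of irreducible graphs (Figure~\ref{fig:int f5}). Your argument that the three edges from $q$ cut $\overline F$ into three regions ``each of which meets only two of the three vertices of $T$'' tacitly assumes that no vertex of $T$ occurs more than once on the boundary walk of the face $F$ of the $K_5$-subembedding. If some $t\in T$ appeared twice on $\partial F$, the arc of $\partial F$ bounding the region containing $q^2$ could carry a second occurrence of $t$, and $q^2$ could then reach all three vertices of $T$ without a crossing. The needed fact is true --- every face of every embedding of $K_5$ in $\mathbb N_1$ is bounded by a simple cycle --- but it is not free: one way to see it is that a vertex $v$ occurring twice on $\partial F$ yields a simple closed curve through $v$ meeting $K_5$ only at $v$ and separating the edges at $v$ into two nonempty classes; if the curve is two-sided this contradicts connectivity of $K_5-v$, and if it is one-sided, cutting along it exhibits $K_4$ together with a split copy of $v$ embedded in a disk with both copies incident to a common face, forcing all four vertices of $K_4$ onto one face of a planar $K_4$, which is impossible since every such face is a triangle. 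Either supply an argument of this kind or fall back on the paper's citation of the obstruction list; as written, the regions claim is unjustified and it is exactly the crux of the only hard nonembeddability statement in the proposition.
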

\begin{proof}
Let $|G|=p_1^{\alpha_ 1}p_2^{\alpha_ 2}\dots p_k^{\alpha_ k}$, where $p_i$'s are distinct primes, and $\alpha_i \geq 1$ are integers.
Note that to prove this result, it is enough to investigate the finite cyclic groups other than those listed in Theorem~\ref{l1}(1). We have to deal with the following cases.

\noindent \textbf{Case 1:} If $k=1$, then the number of proper subgroups of $G$ is $\alpha_1-1$ and so
\begin{equation}\label{200}
\mathscr{I}(G)\cong K_{\alpha_1-1}.
\end{equation}

\noindent It follows that
$\gamma(\mathscr{I}(G))=1$ if and only if $\alpha_1=6$, 7, 8. and  $\overline{\gamma}(\mathscr{I}(G))=1$ if and only if $\alpha_1=6$, 7.

\noindent \textbf{Case 2:} If $k=2$ and $\alpha_2= 1$, then we consider the following subcases:

\noindent \textbf{Sub case 2a:} $\alpha_1=3$. Let $H_i$, $i=1$, $\ldots$, 6 be the subgroups of $G$ of order $p_1$, $p_1^2$, $p_1^3$, $p_2$, $p_1p_2$,
$p_1^2p_2$ respectively. Here $H_1$ is a subgroup of $H_2$, $H_3$, $H_5$, $H_6$; $H_4$ is a subgroup of $H_5$, $H_6$; $H_4$ has trivial intersection with $H_i$,
$i=1$, 2, 3. Therefore,
\begin{equation}\label{201}
\mathscr{I}(G)\cong K_2+(K_3\cup K_1),
\end{equation} which is a subgraph of $K_6$ and it contains $K_5$. So $\gamma(\mathscr{I}(G))=1$ and $\overline{\gamma}(\mathscr{I}(G))=1$.

\noindent\textbf{Sub case 2b:} $\alpha_1=4$. Let $H_i$, $i=1$, $\ldots$, 8 be the subgroups of $G$ of order
$p_1$, $p_1^2$, $p_1^3$, $p_1^4$, $p_2$, $p_1p_2$,
$p_1^2p_2$, $p_1^3p_2$ respectively. Here $H_1$ is a subgroup of $H_2$, $H_3$, $H_4$, $H_6$, $H_7$, $H_8$; $H_5$ is a subgroup of $H_i$, $i=6$, 7, 8;
$H_5$ has trivial intersection with $H_i$, $i=1$, 2, 3, 4.
It follows that
\begin{equation}\label{2000}
\mathscr{I}(G)\cong K_3+(K_4\cup K_1).
\end{equation}
Also $\gamma(\mathscr{I}(G))=1$ and the corresponding toroidal embedding is shown in Figure~\ref{fig:int f4}. Since
$\mathscr{I}(G)$ contains $K_7$ as a subgraph, it follows that $\overline{\gamma}(\mathscr{I}(G))>1$.

\begin{figure}[ht]
\begin{center}
\includegraphics[scale=.8]{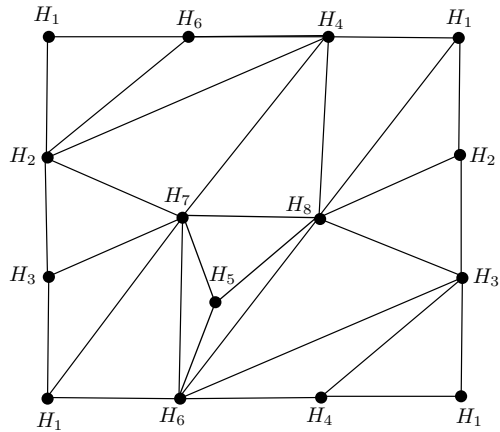}
\caption{An embedding of $\mathscr{I}(\mathbb Z_{p_1^4p_2})$ in torus.}
\label{fig:int f4}
\end{center}
\end{figure}

\noindent\textbf{Sub case 2c:} $\alpha_1\geq5$. Let $H_i$, $i=1$, $\ldots$, 8 be the subgroups of $G$ of order
$p_1$, $p_1^2$, $p_1^3$, $p_1^4$, $p_1^\alpha$, $p_1p_2$,
$p_1^2p_2$, $p_1^3p_2$ respectively. Since $H_1$ is a subgroup of $H_i$, $i=2$, $\ldots$, 8, it follows that
they form $K_8$ as a subgraph of $\mathscr{I}(G)$ and so $\gamma(\mathscr{I}(G))>1$,
$\overline{\gamma}(\mathscr{I}(G))>1$.

\noindent \textbf{Case 3:} $k=2$ and $\alpha _2\geq 2$. Now we have to deal with the following sub cases:

\noindent\textbf{Sub case 3a:} $\alpha_1=\alpha_2=2$. Let $H_i$, $i=1$, $\ldots$, 7 be the subgroups of $G$ of order
$p_1$, $p_1^2$, $p_2$, $p_2^2$, $p_1p_2$, $p_1^2p_2$,
$p_1p_2^2$ respectively. Here $H_1$ is a subgroup of $H_2$, $H_5$, $H_6$, $H_7$; $H_3$ is a subgroup of $H_4$, $H_5$, $H_6$, $H_7$; $H_1$, $H_2$ have trivial
intersection with $H_3$, $H_4$. It follows that
\begin{equation}\label{202}
\mathscr{I}(G)\cong K_3+2K_2.
\end{equation}
Moreover $\gamma(\mathscr{I}(G))=1$, since $\mathscr{I}(G)$ is a subgraph of $K_7$ and it contains $K_5$.

Also $\overline{\gamma}(\mathscr{I}(G))>1$, since $\mathscr{I}(G)$ has a subgraph isomorphic to the graph shown in Figure~\ref{fig:int f5}, which is one of the obstruction for projective-plane (e.g., see Theorem 0.1 and graph
$B_1$ of case (3.6) on p. 340 in \cite{glov}).
\begin{figure}[ ht ]
\begin{center}
\includegraphics[scale=.8]{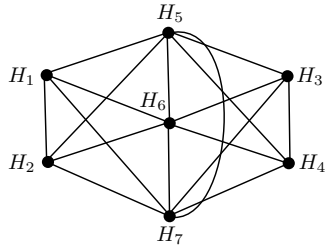}
\caption{An obstruction for the projective-plane.}
\label{fig:int f5}
\end{center}
\end{figure}

\noindent\textbf{Sub case 3b:} Either $\alpha>2$ or $\beta>2$. Let
$H_i$, $i=1$, $\ldots$, 8 be subgroups of $G$ of order
$p_1$, $p_1^2$, $p_1^3$, $p_2$, $p_2^2$, $p_1p_2$,
$p_1^2p_2$, $p_1^3p_2$ respectively. Here $H_1$ is a subgroup of $H_2$, $H_3$, $H_6$, $H_7$, $H_8$; $H_4$ is a subgroup of $H_5$, $H_6$, $H_7$, $H_8$.
It follows that $\mathscr{I}(G)$ has a subgraph $\mathcal A_1$ as shown in Figure~\ref{fig:f2}, so $\gamma(\mathscr{I}(G))>1$ and $\overline{\gamma}(\mathscr{I}(G))>1$.

Also $\mathscr{I}(G)$ contains $K_{3,5}$ as a subgraph with bipartition $X:=\{H_3$, $H_6$ $H_7\}$ and $Y:=\{H_1$, $H_2$, $H_4$, $H_5$, $H_8\}$ and so
$\overline{\gamma}(\mathscr{I}(G))>1$.

\noindent \textbf{Case 4:} If $k=3$, then we need to consider only for $\alpha_1\geq2$, $\alpha_2$, $\alpha_3\geq1$. Let $H_i$, $i=1$, $\ldots$, 8
be subgroups of $G$ of order
$p_1$, $p_1^2$, $p_1p_2p_3$, $p_1p_2$, $p_1p_3$,
$p_2p_3$, $p_1^2p_2$, $p_1^2p_3$ respectively. Here $H_1$ is a subgroup of $H_2$, $H_3$, $H_4$, $H_5$, $H_7$, $H_8$; $H_i$, $i=3$, $\ldots$, 8
intersect non-trivially with each other. It follows that $\mathscr{I}(G)$ has a subgraph $\mathcal A_1$ as shown in Figure~\ref{fig:f2}, so $\gamma(\mathscr{I}(G))>1$
 and $\overline{\gamma}(\mathscr{I}(G))>1$.

\noindent \textbf{Case 5:} If $k\geq 4$, then let $H_i$, $i=1$, $\ldots$, 8 be subgroups of $G$ of orders
$p_1$, $p_1p_2$, $p_1p_2p_3$, $p_1p_2p_4$, $p_1p_3p_4$,
$p_2p_3p_4$, $p_1p_3$, $p_1p_4$ respectively. Here $H_1$ is a subgroup of $H_2$, $H_3$, $H_4$, $H_6$, $H_7$, $H_8$; $H_5$, $H_3$, $H_4$, $H_6$, $H_7$, $H_8$
intersect non-trivially with each other. It follows that $\mathscr{I}(G)$ has a subgraph $\mathcal A_1$ as shown in Figure~\ref{fig:f2}, so $\gamma(\mathscr{I}(G))>1$ and  $\overline{\gamma}(\mathscr{I}(G))>1$

Combining  all the cases together the proof follows.
\end{proof}

\begin{pro}\label{intersecting graph t1}
 Let $G$ be a finite non-cyclic abelian group and $p$, $q$ be distinct primes. Then
\begin{enumerate}[{\normalfont (1)}]

\item $\mathscr{I}(G)$ is $K_5$-free if and only if $G$ is one of $\mathbb Z_p \times \mathbb Z_p$, $\mathbb Z_4 \times \mathbb Z_2$, or $\mathbb Z_6\times \mathbb Z_2$;


\item The following are equivalent:
\begin{enumerate}
\item $G\cong\mathbb Z_p\times \mathbb Z_p$;
\item $\mathscr{I}(G)$ is $C_3$-free;
\item $\mathscr{I}(G)$ is acyclic;
\item $\mathscr{I}(G)$ is bipartite.
\end{enumerate}
\item $\mathscr{I}(G)$ is toroidal if and only if $G$ is one of $\mathbb Z_{p^2}\times \mathbb Z_p$$(p=3,5)$ or $\mathbb Z_{3q}\times \mathbb Z_3$;
\item $\mathscr{I}(G)$ is projective-planar if and only if $G$ is either $\mathbb Z_9\times \mathbb Z_3$ or $\mathbb Z_{3q}\times \mathbb Z_3$.
\end{enumerate}
\end{pro}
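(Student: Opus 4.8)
The plan is to handle the four parts by a systematic reduction: since $G$ is a finite non-cyclic abelian group, write $G$ as a product of cyclic $p$-groups and note that the lattice of subgroups (hence $\mathscr{I}(G)$) is determined by this decomposition. For part (1), I would first show that any non-cyclic abelian $p$-group of order $\geq p^3$ other than $\mathbb{Z}_4\times\mathbb{Z}_2$ contains $\mathbb{Z}_p\times\mathbb{Z}_p$ together with enough extra subgroups (the subgroups of order $p$ all sit inside a common order-$p^2$ subgroup only when the group is $\mathbb{Z}_p\times\mathbb{Z}_p$ itself) to force a $K_5$; one computes $\mathscr{I}(\mathbb{Z}_p\times\mathbb{Z}_p)\cong\overline{K}_{p+1}$ from (2.4), $\mathscr{I}(\mathbb{Z}_4\times\mathbb{Z}_2)\cong K_1+(K_4\cup\overline{K}_2)$ from (2.5) (so $K_5$-free), and $\mathscr{I}(\mathbb{Z}_6\times\mathbb{Z}_2)$ from Figure 3(b). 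For the remaining groups, either there is an order-$p$ subgroup contained in $\geq 4$ other proper subgroups (giving $K_5$), or one extracts $K_5$ from a $\mathbb{Z}_p\times\mathbb{Z}_p$ factor for $p\geq 3$ (which already has $4$ subgroups of order $p$, all lying in $G$ itself if $G=\mathbb{Z}_p\times\mathbb{Z}_p$ is not the whole group). The key point is to enumerate, for each small group, the vertices lying above a fixed minimal subgroup.

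For part (2), the implications $(d)\Rightarrow(c)\Rightarrow(b)$ are immediate (a bipartite graph has no odd cycle, so no $C_3$; acyclic implies $C_3$-free). For $(a)\Rightarrow(d)$, use $\mathscr{I}(\mathbb{Z}_p\times\mathbb{Z}_p)\cong\overline{K}_{p+1}$, which is totally disconnected, hence trivially bipartite. The substantive direction is $(b)\Rightarrow(a)$: I must show that every non-cyclic abelian $G$ other than $\mathbb{Z}_p\times\mathbb{Z}_p$ has a triangle in $\mathscr{I}(G)$. If $G$ has order divisible by two distinct primes, pick a Sylow structure giving two subgroups with common nontrivial intersection plus a third overlapping both; if $G$ is a $p$-group that is non-cyclic and not $\mathbb{Z}_p\times\mathbb{Z}_p$, then $|G|\geq p^3$ and $G$ has a subgroup $H\cong\mathbb{Z}_{p^2}$ or $\mathbb{Z}_p\times\mathbb{Z}_p$ properly contained in $G$; in either case one finds three proper subgroups pairwise intersecting nontrivially (e.g. a chain, or a fan through a fixed order-$p$ subgroup), yielding $C_3$. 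This is a short case analysis once the $p$-group case is pinned down.

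For parts (3) and (4), the strategy is: (i) reduce to a short list of candidate groups, and (ii) for each, either exhibit the relevant embedding or an obstruction. For (i), observe that if $G$ has a non-cyclic abelian quotient or subgroup large enough, $\mathscr{I}(G)$ contains one of the forbidden configurations from Theorems 2.4, 2.5, 2.6 — in particular $K_8$, $K_{3,5}$, or $\mathcal{A}_1$ — so $\gamma>1$ and $\overline{\gamma}>1$. Concretely: any $\mathbb{Z}_p\times\mathbb{Z}_p$ with $p\geq 7$ is already excluded by $\overline{K}_{p+1}$ considerations once multiplied by anything, groups of order divisible by $\geq 3$ primes or with a large $p$-part produce $\mathcal{A}_1$ (mimicking Cases 3b, 4, 5 of Proposition 3.1), and one whittles the list down to $\mathbb{Z}_{p^2}\times\mathbb{Z}_p$ and $\mathbb{Z}_{pq}\times\mathbb{Z}_p$ and their small instances. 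For (ii), one computes $\mathscr{I}$ explicitly: for $\mathbb{Z}_{p^2}\times\mathbb{Z}_p$ with $p=3,5$ the graph embeds on the torus (draw the embedding), while $\mathbb{Z}_{p^2}\times\mathbb{Z}_p$ with $p\geq 7$ and $\mathbb{Z}_{27}\times\mathbb{Z}_3$ contain $K_8$ or $\mathcal{A}_1$; for projective-planarity only $\mathbb{Z}_9\times\mathbb{Z}_3$ survives among the $p$-groups (since $\mathbb{Z}_{25}\times\mathbb{Z}_5$ gives $K_7$ or worse), and $\mathbb{Z}_{3q}\times\mathbb{Z}_3$ works for both by combining the planar piece $\mathscr{I}(\mathbb{Z}_3\times\mathbb{Z}_3)$-structure with the extra prime $q$ carefully.

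The main obstacle I anticipate is the exact structural count of subgroups and their incidences for $\mathbb{Z}_{p^2}\times\mathbb{Z}_p$ and $\mathbb{Z}_{pq}\times\mathbb{Z}_3$: these groups have $\mathbb{Z}_p\times\mathbb{Z}_p$ as a subgroup (contributing $p+1$ incomparable subgroups of order $p$), plus further subgroups of order $p^2$, $pq$, etc., and deciding precisely when the resulting graph crosses from toroidal to genus $\geq 2$ (or projective-planar to $\overline{\gamma}\geq 2$) requires either producing a clean drawing on the torus/projective plane or locating a specific obstruction from Figures 1–2 or from the Glover–Huneke–Wang list used in Sub case 3a. I would spend most of the effort making those incidence tables precise and then matching them against the known obstruction lists, exactly as in the proof of Proposition 3.1.
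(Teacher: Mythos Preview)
Your overall strategy is essentially the paper's: a case-by-case run through the small non-cyclic abelian groups, computing $\mathscr{I}(G)$ explicitly for $\mathbb{Z}_p\times\mathbb{Z}_p$, $\mathbb{Z}_{p^2}\times\mathbb{Z}_p$, $\mathbb{Z}_{pq}\times\mathbb{Z}_p$, then exhibiting an $\mathcal{A}_1$ or $\mathcal{B}_1$ obstruction (or $K_8$, $K_{3,5}$) for everything larger, and finally drawing explicit torus and projective-plane embeddings for the borderline cases. The paper does exactly this, including the explicit formula $\mathscr{I}(\mathbb{Z}_{p^2}\times\mathbb{Z}_p)\cong K_1+(K_{p+1}\cup\overline{K}_p)$ and separate drawings for $\mathbb{Z}_{3q}\times\mathbb{Z}_3$ on each surface.

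One genuine slip: in part~(2) you write that ``$(d)\Rightarrow(c)\Rightarrow(b)$ are immediate,'' but $(d)\Rightarrow(c)$ (bipartite $\Rightarrow$ acyclic) is false for graphs in general---a $4$-cycle is bipartite and not acyclic---and your parenthetical only justifies $(d)\Rightarrow(b)$ and $(c)\Rightarrow(b)$. The correct trivial chain is $(a)\Rightarrow(c)\Rightarrow(d)\Rightarrow(b)$ (totally disconnected $\Rightarrow$ acyclic $\Rightarrow$ bipartite $\Rightarrow$ no odd cycle), and then $(b)\Rightarrow(a)$ is the substantive step. The paper sidesteps this by simply checking, in each case of the enumeration, that the graph either is $\overline{K}_{p+1}$ or contains a $C_3$; once that dichotomy is established, all four conditions collapse together. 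Your plan for $(b)\Rightarrow(a)$ is fine, but fix the direction of the easy implications.
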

\begin{proof}
We split the proof in to several cases:

\noindent\textbf{Case 1}: $G \cong \mathbb Z_p \times \mathbb Z_p$. Then by Theorem~\ref{1000}, $\mathscr{I}(G)$ is planar and by \eqref{e4}, it is acyclic.

\noindent\textbf{Case 2:} $G\cong{\mathbb Z_{p^2}} \times{\mathbb Z_p}$. Here $\langle(1,0)\rangle$, $\langle(1,1)\rangle$, $\ldots$, $\langle(1,p-1)\rangle$,
$\langle(p,0), (0,1)\rangle$, $\langle(p,0)\rangle$, $\langle(p,1)\rangle$, $\ldots$, $\langle(p,p-1)\rangle$, $\langle(0,1)\rangle$ are the only
proper subgroups of $G$. Note that $\langle(p,0)\rangle$ is a subgroup of $\langle(1,0)\rangle$, $\langle(1,1)\rangle$, $\ldots$, $\langle(1,p-1)\rangle$,
$\langle(p,0), (0,1)\rangle$; $\langle(p,0)\rangle$, $\langle(p,1)\rangle$, $\ldots$, $\langle(p,p-1)\rangle$, $\langle(0,1)\rangle$ are
subgroups of $\langle(p,0), (0,1)\rangle$; no two remaining subgroups intersect non-trivially. It follows that
\begin{equation}\label{e5}
\mathscr{I}(G) \cong K_1 +(K_{p+1} \cup \overline{K}_p).
\end{equation}

So $\mathscr{I}(G)$ contains $C_3$ as a subgraph. Note that $\mathscr{I}(G)$ is a graph obtained by attaching $p$ pendent edges to any one of the vertices of $K_{p+2}$. So  $\gamma(\mathscr{I}(G))=1$ if and only if $p=3,5$; $\overline{\gamma}(\mathscr{I}(G))=1$ if and only if $p=3$; $\mathscr{I}(G)$ is $K_5$-free if and only if $p=2$.

\noindent\textbf{Case 3:} $G\cong \mathbb Z_{pq}\times \mathbb Z_p$. If $p=2$, then
by Theorem~\ref{1000}, $\mathscr{I}(G)$ is planar and by Figure~3(b), it contains $C_3$.
If $p=3$, then $H:=\mathbb Z_p\times \mathbb Z_p$ is a subgroup of $G$. $H$ has four proper subgroups, let them be $H_1$, $H_2$, $H_3$, $H_4$. Now $H$ and its proper subgroups,  $H_5:=\mathbb Z_q\times \{e\}$, $H_iH_5$, $i=1,2,3,4$ are the only proper subgroups of $G$. Here $H_5$ is a subgroup of $H_iH_5$,
$i=1,2,3,4$; $H_iH_5$, $i=1,2,3,4$ has non-trivial intersection with $H$; $H_i=1,2,3,4$ are subgroups of $H$; no two remaining subgroups intersect nontrivially.
A toroidal embedding of $\mathscr{I}(G)$ is shown in Figure~\ref{fig:int f6},
and an embedding of $\mathscr{I}(G)$ in the projective-plane is shown in Figure~\ref{fig:int f7}. Note that $\mathscr{I}(G)$ contains $K_5$.

\begin{figure}[ ht ]
\begin{center}
\includegraphics[scale=.8]{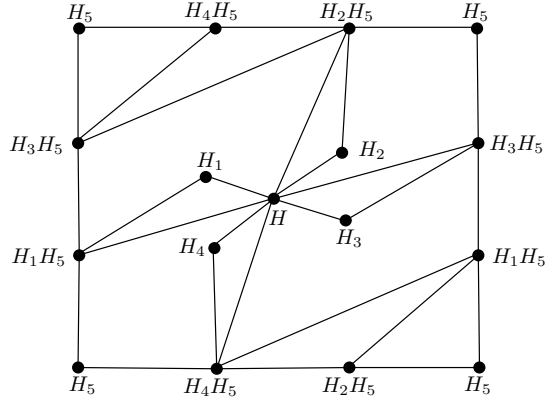}
\caption{An embedding of $\mathscr{I}(\mathbb Z_{3q}\times \mathbb Z_3)$ in the torus.}
\label{fig:int f6}
\end{center}
\end{figure}

\begin{figure}[ ht ]
\begin{center}
\includegraphics[scale=.8]{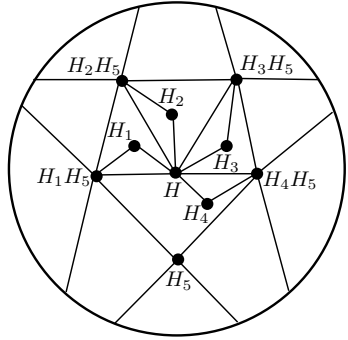}
\caption{An embedding of $\mathscr{I}(\mathbb Z_{3q}\times \mathbb Z_3)$ in projective plane.}
\label{fig:int f7}
\end{center}
\end{figure}

If $p=5$, then $H:=\mathbb Z_p\times \mathbb Z_p$ is a subgroup of $G$. $H$ has six proper subgroups, let them be $H_i$, $i=1$, $\ldots$, $6$. Now $H$ and its proper subgroups, $H_7:=\mathbb Z_q\times \{e\}$, $B_i=H_iH_7$, $i=1$, $\ldots$, 6 are the only proper subgroups of $G$. Here $H_7$ is a subgroup of $B_i$,
$i=1$, $\ldots$, 6; $A_i$, $i=1$, $\ldots$, 6 and $H$ intersect non-trivially; $H_i$, $i=1$, $\ldots$, 6 are subgroups of $H$;
no two remaining subgroups have non-trivial intersection.
It follows that $\mathscr{I}(G)$ has a subgraph $\mathcal A_1$ as shown in Figure~\ref{fig:f2}, so $\gamma(\mathscr{I}(G))>1$ and $\overline{\gamma}(\mathscr{I}(G))>1$.

If $p\geq 7$, then $H:=\mathbb Z_p\times \mathbb Z_p$, $H_1:=\langle (1,0)\rangle$, $H_2:=\langle (1,1)\rangle$, $H_3:=\langle (2,1)\rangle$,
$H_4:=\langle (3,1)\rangle$, $H_5:=\langle (4,1)\rangle$, $H_6:=\langle (5,1)\rangle$, $H_7:=\langle (0,1)\rangle$, $H_8:=\mathbb Z_q\times \{e\}$, $H_iH_8$,
$i=1$, $\ldots$, 7 are proper subgroups of $G$. Here $H_8$ is a subgroup of $H_iH_8$,
$i=1$, $\ldots$, 7, so they form $K_8$ as a subgraph of $\mathscr{I}(G)$ and hence $\gamma(\mathscr{I}(G))>1$, $\overline{\gamma}(\mathscr{I}(G))>1$.

\noindent \textbf{Case 4:} $G\cong \mathbb Z_{p^2q}\times \mathbb Z_p$. Here $H:=\mathbb Z_{p^2}\times \mathbb Z_p$ is a subgroup of $G$.
As in Case 2,  $H$ has at least three proper subgroups of order $p^2$, say $H_i$, $i=1$, 2, 3  and has at least three subgroups of order $p$, say $H_i$, $i=4$, 5, 6; $H_j$, $j=4,5,6$ are subgroups of $H_i$ for some $i\in\{1$, 2, $3\}$, let it be $H_3$; also for some $j\in\{4$, 5, $6\}$, $H_j$ is a
subgroup of $H_i$, for every $i=1$, 2, 3,  let it be $H_4$. Let $H_7$ be a subgroup of $G$ of order $q$. Here
$H_iH_7$, $H_jH_7$, $i=1$, 2, 3 and $j=4$, 5, 6 are subgroups of $G$ and they have $H_7$ as their intersection, so $H_7$, $H_iK$, $H_jK$, $i=1$, 2,
$j=4$, 5, 6 form $K_6$ as a subgroup of $\mathscr{I}(G)$; $H$ and $H_3H_7$ intersect non-trivially and they also intersect with $H_jH_7$, $j=4$, 5, 6
non-trivially. It follows
that $\mathscr{I}(G)$ has a subgraph $\mathcal A_1$ as shown in Figure~\ref{fig:f2}, so $\gamma(\mathscr{I}(G))>1$ and $\overline{\gamma}(\mathscr{I}(G))>1$.

\noindent\textbf{Case 5:} $G \cong \mathbb Z_{p^2} \times \mathbb Z_{p^2}=\langle a,b~|~a^{p^2}=b^{p^2}=1, ab=ba\rangle$.
Then $H_1:=\langle a\rangle$, $H_2:=\langle ab^p\rangle$, $H_3:=\langle a^p, b^p\rangle$, $H_4:=\langle a,b^p\rangle$, $H_5:=\langle a^p, a^pb\rangle$,
$H_6:=\langle a^p\rangle$, $H_7:=\langle b\rangle$, $H_8:=\langle b^p\rangle$ are subgroups of $G$. Also $H_6$ is a subgroup of $H_i$, $i=1$, $\ldots$, 5;
$H_8$ is a subgroup of $H_3$, $H_4$, $H_5$, $H_7$. It follows that $\mathscr{I}(G)$ has a subgraph $\mathcal A_1$ as shown in Figure~\ref{fig:f2}, so $\gamma(\mathscr{I}(G))>1$ and $\overline{\gamma}(\mathscr{I}(G))>1$.

\noindent\textbf{Case 6:} $G \cong \mathbb Z_{p^k} \times \mathbb Z_{p^l}$, $k$, $l$ $\geq 3$. Then $\mathbb Z_{p^2} \times \mathbb Z_{p^2}$
is a proper subgroup of $G$ and so by Case 4, we have $\gamma(\mathscr{I}(G))>1$ and $\overline{\gamma}(\mathscr{I}(G))>1$.

\noindent \textbf{Case 7:} $G\cong \mathbb Z_p\times \mathbb Z_{pqr}$. Then $H:=\mathbb Z_p\times \mathbb Z_p$ is a subgroup of $G$ and
as in Case 1,
$H$ has at least three proper subgroups of order $p$, say $H_i$, $i=1,2,3$. Let $H_4$, $H_5$ be  subgroups of $G$ of orders $q$, $r$ respectively.
Here $H_iH_j$, $i=1$, 2, 3, $j=4$, 5 are also proper subgroups of $G$. So $H_iH_4$, $i=1$, 2, 3, $HH_4$, $H_4$ forms
$K_5$ as a subgraph of $\mathscr{I}(G)$; $H_iH_5$, $i=1$, 2, 3, $HH_5$, $H_5$ also forms $K_5$ as a subgraph of $\mathscr{I}(G)$. Thus $\mathscr{I}(G)$ has a subgraph $\mathcal B_1$ as shown in Figure~\ref{fig:int f3}, so $\gamma(\mathscr{I}(G))>1$.  $\mathcal B_1$ is also a topological obstruction for projective-plane embedding (e.g., see Theorem. 0.1 and the graph $A_5$ of case (3.15) on p. 343 in \cite{glov}), so $\overline{\gamma}(\mathscr{I}(G))>1$.

\noindent\textbf{Case 8:} $G \cong\mathbb Z_p\times\mathbb Z_p\times\mathbb Z_p$. Here $\langle(1,0,0)$, $(0,1,0)\rangle$, $\langle(1,0,0)$, $(0,0,1)\rangle$,
$\langle(1,0,0)$, $(1,1,0)\rangle$, $\langle(1,0,0)$, $(0,1,1)\rangle$ are proper subgroups of $G$, which have $\langle(1,0,0)\rangle$ as their intersection.
It follows that these five subgroups form $K_5$ as a subgraph of $\mathscr{I}(G)$.
Also $\langle(1,1,1)$, $(0,1,0)\rangle$, $\langle(0,1,0)$, $(0,0,1)\rangle$,
$\langle(0,1,0)$, $(1,1,0)\rangle$, $\langle(0,1,0)$, $(0,1,1)\rangle$ are proper subgroups of $G$, which have $\langle(0,1,0)\rangle$ as their intersection
 and so they also form $K_5$ as a subgraph of $\mathscr{I}(G)$. Thus $\mathscr{I}(G)$ has  a subgraph $\mathcal B_1$ as shown in Figure~\ref{fig:int f3}, so $\gamma(\mathscr{I}(G))>1$ and $\overline{\gamma}(\mathscr{I}(G))>1$.

\noindent\textbf{Case 9:} $G \cong {\mathbb Z}_{p_1^{\alpha_1}} \times {\mathbb Z}_{p_2^{\alpha_2}} \times \ldots \times {\mathbb Z_{p_k^{\alpha_k}}}$, where
$p_i$'s are primes with at least two $p_i$'s are equal and $\alpha_i\geq1$. If $k\geq 2$, then $G$ has one of the following groups as its subgroup: $\mathbb Z_{p_i^2}\times \mathbb Z_{p_i^2}$,
$\mathbb Z_{p_i^2p_j}\times \mathbb Z_{p_i}$, $\mathbb Z_{p_i}\times \mathbb Z_{p_i}\times \mathbb Z_{p_i}$, for some $i$, $j$.
So by Cases 3, 4, 7, the intersection graph of subgroups of these subgroups are non-toroidal, non-projective planar and contains $K_5$. So it follows that $\gamma(\mathscr{I}(G))>1$, $\overline{\gamma}(\mathscr{I}(G))>1$ and $\mathscr{I}(G)$ contains $K_5$ as a subgraph.

The result follows by combining all the above cases.
\end{proof}

\section{Finite non-abelian groups}\label{sec:4}
In this section, we classify the finite non-abelian groups whose intersection graphs of subgroups are one of toroidal, projective-planar, $K_5$ free, $C_3$-free, acyclic or bipartite. We first investigate the non-abelian solvable groups and then we deal with the nonsolvable groups.

For any integer $n\geq 3$, the dihedral group of order $2n$ is given by $D_{2n}=\langle a, b~|~a^n=b^2=1, ab=ba^{-1}\rangle$;
For any integer $n \geq 2$, the generalized quaternion group of order $2^n$ is given by
$Q_{2^n} = \big < a, b ~|~a^{2^{n-1}} = b^4 = 1, a^{2^{n-2}} = b^2 = 1,
bab^{-1} = a^{-1}\big >$; For any $\alpha\geq 3$ and $p$ is a prime, the modular group of order $p^\alpha$ is given by
$M_{p^\alpha}=\langle a,b~|~a^{p^{\alpha-1}}=b^p=1, bab^{-1}=a^{p^{\alpha-2}+1}\rangle$; $S_n$ and $A_n$ are symmetric and alternating groups of degree $n$
acting on $\{1,2, \ldots, n\}$ respectively.
We denote the order of an element $a \in \mathbb Z_n$ by $\text{ord}_n(a)$. The number of Sylow $p$-subgroups of a group $G$
is denoted by $n_p(G)$.
\subsection{Finite non-abelian solvable groups}\label{sec:4a}
\begin{pro}\label{intersecting graph t2}
 Let $G$ be a non-abelian group of order $p^{\alpha}$, where $p$ is a prime and $\alpha \geq 3$. Then
\begin{enumerate}[{\normalfont (1)}]
\item $\mathscr{I}(G)$ is $K_5$-free if and only if $G$ is  either $Q_8$ or $M_8$;

\item $\mathscr{I}(G)$ contains $C_3$;
\item $\mathscr{I}(G)$ is toroidal if and only if $G$ is  one of $M_{p^3}(p=3$, $5)$ or $M_{16}$.
\item $\mathscr{I}(G)$ is projective-planar if and only if $G\cong M_{27}$.
\end{enumerate}
\end{pro}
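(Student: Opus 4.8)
The plan is to analyze non-abelian $p$-groups of order $p^\alpha$ ($\alpha\ge 3$) according to $p$ and $\alpha$, producing in each case either an explicit description of $\mathscr{I}(G)$ (when $|G|$ is small) or a forbidden subgraph forcing $\gamma>1$ and $\overline\gamma>1$ (when $|G|$ is large). The backbone of the argument is Theorem~\ref{501}: by part (i) the number of subgroups of order $p^s$ is $\equiv 1\pmod p$, and by part (ii) the only non-abelian $p$-group with a unique subgroup of order $p$ is $Q_{2^\alpha}$. Everything else has many subgroups meeting non-trivially at a common small subgroup, which quickly manufactures large complete graphs.

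First I would dispose of the small orders by direct computation. For $\alpha=3$: the non-abelian groups are $Q_8$, $D_8$, $M_8$ for $p=2$ and the two extraspecial groups (Heisenberg group and $\mathbb Z_{p^2}\rtimes\mathbb Z_p=M_{p^3}$) for $p$ odd. Using \eqref{23} we have $\mathscr{I}(Q_8)\cong K_4$, which is $K_5$-free, planar, hence neither toroidal nor projective-planar but contains $C_3$. For $M_8$ one reads off $\mathscr{I}(M_8)$ from Figure~\ref{int f1}(c): it is planar, $K_5$-free, contains $C_3$. For $D_8$: it has a cyclic subgroup of order $4$, a centre $Z$ of order $2$ contained in that cyclic subgroup and in both Klein four-subgroups, and each Klein four-subgroup contains two further subgroups of order $2$ besides $Z$; listing the $8$ proper subgroups one finds a $K_5$ (the three order-$2$ subgroups inside one $V_4$ together with that $V_4$ and... ) — more carefully, $Z$ lies in five proper subgroups, giving $K_6$ minus a perfect matching or similar, in any case $K_5 \subseteq \mathscr{I}(D_8)$, but $\mathscr{I}(D_8)$ is planar, so $D_8$ is not $K_5$-free yet is planar. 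For the odd extraspecial groups of order $p^3$ with $p\ge 3$ one similarly checks: for $M_{27}$ (the group $\mathbb Z_9\rtimes\mathbb Z_3$) one gets, exactly as in \eqref{e9}–\eqref{33} with the Sylow structure, $\mathscr{I}(M_{27})\cong K_1+(K_1\cup 3K_2)$ or a close variant; this embeds in the projective plane but not the plane, giving part (4); for the Heisenberg group of order $27$, the centre of order $3$ is contained in all four subgroups of order $9$ and in four subgroups of order $3$, producing $K_8$ minus a few edges — one checks it contains $\mathcal{A}_1$ of Figure~\ref{fig:f2}, so $\gamma>1$ and $\overline\gamma>1$. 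For $p\ge 5$ and $\alpha=3$ the centre already lies in $p+1\ge 6$ subgroups of order $p$ and in several of order $p^2$, so $\mathscr{I}(G)\supseteq K_8$ and both genera exceed $1$.

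For $\alpha=4$ I would treat $M_{16}$ explicitly (its subgroup lattice gives a graph one checks to have $\gamma=1$, $\overline\gamma>1$, exhibiting $K_7$ or $K_{3,5}$ for the lower bound and a torus embedding for the upper bound — paralleling Sub case 2b of Proposition~\ref{intersecting graph t100}), likewise $M_{p^3}$ for $p=3,5$: here $\mathscr{I}(M_{27})$ already handled, and $\mathscr{I}(M_{125})$ I would show is toroidal by an explicit embedding analogous to Figure~\ref{fig:int f4} while containing $K_7$ so $\overline\gamma>1$. For all other non-abelian groups of order $p^4$ (and there I would invoke Theorem~\ref{501}(i),(ii): since $G\ne Q_{16}$ it has either $\ge p+1$ subgroups of order $p$ with a common one inside the centre, or enough subgroups of order $p^2$ and $p^3$ through a fixed order-$p$ subgroup), one produces $\mathcal{A}_1$ as a subgraph, or directly $K_8$, forcing $\gamma>1$ and $\overline\gamma>1$. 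Finally, for $\alpha\ge 5$: every such $G$ contains a non-abelian subgroup of order $p^4$ other than $Q_{16}$, or a cyclic subgroup of order $p^4$ together with enough companions, so some subgroup $H<G$ with $\gamma(\mathscr{I}(H))>1$ exists, or one directly finds $K_8$ from a fixed minimal subgroup lying in $\ge 7$ proper subgroups; in either case $\gamma(\mathscr{I}(G))>1$ and $\overline\gamma(\mathscr{I}(G))>1$. Assembling: the only $K_5$-free cases are $Q_8$ and $M_8$ (everything else has $K_5$, as shown above); $\mathscr{I}(G)$ always contains $C_3$ since some subgroup of order $p$ or $p^2$ lies in $\ge 3$ proper subgroups pairwise meeting in it (true for every non-abelian $p$-group of order $\ge p^3$, including $Q_8$); the toroidal cases are exactly $M_{p^3}$ ($p=3,5$) and $M_{16}$; and the sole projective-planar case is $M_{27}$.

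The main obstacle I anticipate is the small-order bookkeeping for odd $p$ — determining the precise subgroup lattices of the order-$p^3$ and order-$p^4$ non-abelian groups (there are several isomorphism types for $p^4$, $p$ odd) and verifying that exactly $M_{27}$ among them is projective-planar while $M_{p^3}$, $p=3,5$, are toroidal. The genus upper bounds require producing concrete embeddings (cf. Figures~\ref{fig:int f4}, \ref{fig:int f6}, \ref{fig:int f7}); the lower bounds are comparatively routine once the right $K_8$, $K_{3,5}$, $\mathcal{A}_1$, or projective-plane obstruction (as in Figure~\ref{fig:int f5}) has been located inside $\mathscr{I}(G)$. The uniform reductions for $\alpha\ge 5$ and most of $\alpha=4$ are the easy part, driven entirely by Theorem~\ref{501}.
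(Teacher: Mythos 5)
Your overall strategy (explicit graphs for the small orders, then $K_8$, $\mathcal A_1$, or $\mathcal B_1$--type obstructions driven by Theorem~\ref{501} for the rest) is the same as the paper's, but the proposal contains concrete errors that would sink parts (1) and (3) as written. First, you list three non-abelian groups of order $8$; there are only two. In this paper's notation $M_8=M_{2^3}=\langle a,b\mid a^4=b^2=1,\ bab^{-1}=a^{-1}\rangle$ \emph{is} the dihedral group $D_8$, and your claim that $\mathscr{I}(D_8)$ contains $K_5$ is false: the eight proper non-trivial subgroups of $D_8$ are three of order $4$ and five of order $2$, the centre together with the three order-$4$ subgroups gives a $K_4$, and every non-central order-$2$ subgroup lies in exactly one proper subgroup, so the clique number is $4$. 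Your claim directly contradicts part (1) of the very statement you are proving ($M_8$ is $K_5$-free).

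Second, your blanket assertion that for $p\ge 5$, $\alpha=3$ ``the centre lies in $p+1\ge 6$ subgroups \ldots so $\mathscr{I}(G)\supseteq K_8$ and both genera exceed $1$'' is false for $G\cong M_{125}$ and contradicts your own later (correct) claim that $M_{125}$ is toroidal: by \eqref{e100}, $\mathscr{I}(M_{p^3})\cong K_1+(K_{p+1}\cup\overline K_p)$ is $K_{p+2}$ with $p$ pendant edges, so for $p=5$ it contains $K_7$ but no $K_8$ and embeds in the torus. Even for the exponent-$p$ (Heisenberg) group the centre yields only the clique $K_{p+2}$ (it lies in the $p+1$ maximal subgroups and in no other order-$p$ subgroup), so $K_8$ is not available for $p=5$ either; the paper instead exhibits the two-disjoint-$K_5$ obstructions $\mathcal B_1$/$\mathcal B_3$. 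Third, your guess $\mathscr{I}(M_{27})\cong K_1+(K_1\cup 3K_2)$ is the graph of $\mathbb Z_q\rtimes_2\mathbb Z_{p^2}$ from \eqref{e9}, not of $M_{27}$; the correct graph $K_1+(K_4\cup\overline K_3)$ contains $K_5$, which is exactly what part (1) needs and what your candidate lacks. Finally, the $\alpha=4$ case is where the paper does the real work — all fifteen isomorphism types for $p$ odd and all nine for $p=2$ are handled one by one — and your appeal to a uniform counting argument via Theorem~\ref{501}(i),(ii) is not substantiated; likewise ``every non-abelian group of order $p^5$ contains a non-abelian subgroup of order $p^4$'' fails for minimal non-abelian groups of order $p^5$, so for $\alpha\ge5$ you must fall back on the direct counting of subgroups of orders $p^{\alpha-1}$ and $p^{\alpha-2}$ as the paper does.
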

\begin{proof} We prove the result in the following cases:

\noindent\textbf{Case 1:} $\alpha =3$. If $p=2$, then the only non-abelian groups of order 8 are $Q_8$ and $M_8$. By Theorem~\ref{1000}, the intersection graphs of subgroups of these two groups are planar and by \eqref{23} and Figure~\ref{int f1}(c), they contains $C_3$.

If $p \neq 2$, then up to isomorphism the only non-abelian groups of order $p^3$ are $M_{p^3}$ and $(\mathbb Z_p \times \mathbb Z_p) \rtimes \mathbb Z_p$.
\begin{enumerate}[{\normalfont (i)}]
\item If $G \cong M_{p^3}$, then the subgroup lattices of $M_{p^3}$ and $\mathbb Z_{p^2}\times \mathbb Z_p$ are isomorphic, so their intersection graphs of subgroups are also isomorphic.
By Case 2 in the proof of Proposition~\ref{intersecting graph t1}, we have
\begin{equation}\label{e100}
\mathscr{I}(G)\cong K_1+(K_{p+1}\cup \overline{K}_p).
\end{equation}
 Also, $\gamma(\mathscr{I}(G))=1$ if and only if $p=3,5$; $\overline{\gamma}(\mathscr{I}(G))=1$ if only if $p=3$; $\mathscr{I}(G)$ contains $K_5$.

\item If $G \cong (\mathbb Z_p \times \mathbb Z_p) \rtimes \mathbb Z_p$, then consider
its proper subgroups $H_1:=\langle a, b \rangle$, $H_2:=\langle a,c \rangle$, $H_3:=\langle ab, c \rangle$, $H_4:=\langle ab^2, c\rangle$,
$H_5:=\langle b, ac \rangle$, $H_6:=\langle b, a^2c\rangle$, $H_7:=\langle b, c\rangle$, $H_8:=\langle b\rangle$, $H_9:=\langle c\rangle$.
Here $H_9$ is a subgroup of $H_2$, $H_3$, $H_4$, $H_7$, so these five subgroups forms $K_5$ as a subgraph of  $\mathscr{I}(G)$; $H_8$ is a subgroup of
$H_1$, $H_5$, $H_6$, so these four subgroups forms $K_4$ as a proper subgraph of $\mathscr{I}(G)$;
$H_1$, $H_4$ intersects non-trivially; $H_8$ is a subgroup of $H_7$, $H_5$, $H_6$. It follows that
$\mathscr{I}(G)$ has a subgraph $\mathcal B_3$ as shown in Figure~\ref{fig:int f3}, so $\gamma(\mathscr{I}(G))>1$.

Also $\mathscr{I}(G)$ has a
subgraph as shown in Figure~\ref{fig:int f8}, which is an obstruction for projective-plane (e.g., see Theorem 0.1 and graph
$C_7$ of case (3.20) on p. 344 in \cite{glov}). Therefore, $\overline{\gamma}(\mathscr{I}(G))>1$.
\begin{figure}[ ht ]
\begin{center}
\includegraphics[scale=.8]{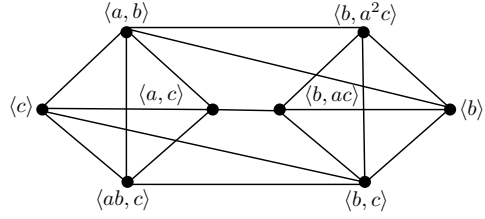}
\caption{An obstruction for the  projective-plane.}
\label{fig:int f8}
\end{center}
\end{figure}
\end{enumerate}
\noindent\textbf{Case 2:} $\alpha=4$. According to  Burnside~\cite{burn}, up to isomorphism, there are fifteen groups of order $p^4$, $p>2$ and there are nine groups
of order $2^4$.
In following we investigate the genus of the intersection graphs of subgroups of each of them:

\noindent \textbf{Sub case 2a:} $p>2$.

\begin{enumerate}[{\normalfont (i)}]
\item $G\cong M_{p^4}$. Here $H_1:=\langle a\rangle$, $H_2:=\langle ab\rangle$, $H_3:=\langle ab^2\rangle$, $H_4:=\langle a^p\rangle$,
$H_5:=\langle a^pb\rangle$, $H_6:=\langle a^pb^2\rangle$, $H_7:=\langle a^p, b\rangle$, $H_8:=\langle a^{p^2}\rangle$ are proper subgroups of $G$. Also
$H_8$ is a subgroup of $H_i$, $i=1$, $\ldots$, 7. It follows that they form $K_8$ as a
subgraph of $\mathscr{I}(G)$, so $\gamma(\mathscr{I}(G))>1$.

\item $G\cong \langle a,b,c~|~a^{p^2}=b^p=c^p=1, cb=a^pbc, ab=ba, ac=ca\rangle$. Here $H_1:=\langle a,b\rangle$,
$H_2:=\langle a,c\rangle$, $H_3:=\langle a\rangle$, $H_4:=\langle a^p, c\rangle$, $H_5:=\langle a^p,b\rangle$,
$H_6:=\langle ab\rangle$, $H_7:=\langle ac\rangle$, $H_8:=\langle a^p\rangle$ are proper subgroups of $G$. Also $H_8$ is a subgroup of $H_i$, $i=1$, $\ldots$, 7.
It follows that they form $K_8$ as a subgraph of $\mathscr{I}(G)$, so $\gamma(\mathscr{I}(G))>1$.

\item $G\cong \langle a,b~|~a^{p^2}=b^{p^2}=1, bab^{-1}=a^{1+p}\rangle$. Here $H_1:=\langle a\rangle$, $H_2:=\langle a^p\rangle$, $H_3:=\langle a^p,b\rangle$,
$H_4:=\langle a^p, b^p\rangle$, $H_5:=\langle a, b^p\rangle$, $H_6:=\langle b\rangle$, $H_7:=\langle b^p\rangle$, $H_8:=\langle ab, b^p\rangle$,
$H_9:=\langle a^2b, b^p\rangle$ are proper subgroups of $G$. Here $H_i$, $i=1$, $\ldots$, 5 intersect with each other non-trivially
and so they form $K_5$ as a subgraph of
$\mathscr{I}(G)$; $H_7$ is a subgroup of $H_4$, $H_6$, $H_8$, $H_9$; $H_6$, $H_8$, $H_9$  intersect with $H_5$ non-trivially.
It follows that
$\mathscr{I}(G)$ has a subgraph $\mathcal B_3$ as shown in Figure~\ref{fig:int f3}, so $\gamma(\mathscr{I}(G))>1$.

\item $G\cong \langle a,b,c~|~a^{p^2}=b^p=c^p=1, ca=a^{1+p}c, ab=ba, cb=bc\rangle$. We can take the proper subgroups of $G$ as in (ii), except by taking
$H_7:=\langle a, bc\rangle$ instead of $H_7=\langle ac\rangle$. Then
$\mathscr{I}(G)$ has $K_8$ as a subgraph and so $\gamma(\mathscr{I}(G))>1$.

\item $G\cong \langle a,b,c~|~a^{p^2}=b^p=c^p=1, ca=abc, ab=ba, cb=bc\rangle$. We can use a similar argument as in (iv) to show
$\mathscr{I}(G)$ has $K_8$ as a subgraph and so $\gamma(\mathscr{I}(G))>1$.

\item $G\cong \langle a,b,c~|~a^{p^2}=b^p=c^p=1, ba=a^{1+p}b, ca=abc, cb=bc\rangle$. Here $H_1:=\langle a,b\rangle$, $H_2:=\langle a,c\rangle$,
$H_3:=\langle b,c\rangle$,
$H_4:=\langle a^p, c\rangle$, $H_5:=\langle a^p,b\rangle$, $H_6:=\langle a^p\rangle$, $H_7:=\langle a\rangle$, $H_8:=\langle a^p, bc\rangle$,
$H_9:=\langle a^p, b^2c\rangle$ are proper subgroups of $G$. Also $H_i$, $i=1$, $\ldots$, 5 intersect with each other non-trivially
and so they form $K_5$ as a subgraph of
$\mathscr{I}(G)$; $H_6$ is a subgroup of $H_7$, $H_8$, $H_9$; $H_6$ is a subgroup of $H_1$; $H_7$, $H_8$, $H_9$ intersect with $H_4$ non-trivially.
It follows that
$\mathscr{I}(G)$ has a subgraph $\mathcal B_3$ as shown in Figure~\ref{fig:int f3}, so $\gamma(\mathscr{I}(G))>1$.

\item If $p=3$, then  $G\cong \langle a,b,c~|~a^{p^2}=b^p=c^{p^2}=1, c^p=a^p, ab=ba^{1+p}, ac=cab^{-1}, cb=bc\rangle$. Here
$H_1:=\langle a,b\rangle$, $H_2:=\langle a,c\rangle$, $H_3:=\langle b,c\rangle$,
$H_4:=\langle a^p, b\rangle$, $H_5:=\langle c\rangle$, $H_6:=\langle a\rangle$, $H_7:=\langle c^p\rangle$, $H_8:=\langle bc\rangle$,
$H_9:=\langle b^2c\rangle$ are subgroups of $G$. Also $H_i$, $i=1$, $\ldots$, 5 intersect with each other non-trivially and so they form $K_5$ as a subgraph of
$\mathscr{I}(G)$; $H_7$ is a subgroup of $H_6$, $H_8$, $H_9$; $H_7$ is a subgroup of $H_4$; $H_6$, $H_8$, $H_9$ intersect with $H_5$ non-trivially.
It follows that
$\mathscr{I}(G)$ has a subgraph $\mathcal B_3$ as shown in Figure~\ref{fig:int f3},
so $\gamma(\mathscr{I}(G))>1$.

If $p>3$, then $G\cong \langle a,b,c~|~a^{p^2}=b^p=c^{p^2}=1, ba=a^{1+p}b, ca=a^{1+p}bc, cb=a^pbc\rangle$.
Here $\langle a,b\rangle\cong M_{p^3}$ is a subgroup of $G$,
so by \eqref{e100}, $M_{p^3}$ together with its proper subgroups forms $K_8$ as a subgraph of $\mathscr{I}(G)$. Hence $\gamma(\mathscr{I}(G))>1$.

\item If $p=3$, then $G\cong \langle a,b,c~|~a^{p^2}=b^p=c^{p^2}=1, c^p=a^{-p}, ab=ba^{1+p}, ac=cab^{-1}, cb=bc\rangle$.
We can use a similar argument as in (vii), to show $\gamma(\mathscr{I}(G))>1$.

If $p>3$, then $G\cong \langle a,b,c~|~a^{p^2}=b^p=c^{p^2}=1, ba=a^{1+p}b, ca=a^{1+dp}bc, cb=a^{dp}bc, d\ncong 0,1 (\mbox {mod} p)\rangle$.
Here $\langle a,b\rangle\cong M_{p^3}$ is a subgroup of $G$,
so by \eqref{e100}, $M_{p^3}$ together with its proper subgroups forms $K_8$ as a subgraph of $\mathscr{I}(G)$. Thus $\gamma(\mathscr{I}(G))>1$.

\item $G\cong \langle a,b,c,d~|~a^p=b^p=c^p=d^p=1, dc=acd, bd=db, ad=da, bc=cb, ac=ca\rangle$. Here $\langle a,b,c\rangle\cong
\mathbb Z_p\times \mathbb Z_p\times \mathbb Z_p$ is a subgroup of $G$ and so by Case 7 in the proof of Proposition~\ref{intersecting graph t1},
it follows that $\gamma(\mathscr{I}(G))>1$.

\item If $p=3$, then $G\cong \langle a,b,c~|~a^{p^2}=b^p=c^p=1, ab=ba, ac=cab, cb=ca^{-p}b\rangle$.
Here $H_1:=\langle a,b\rangle$, $H_2:=\langle a,c\rangle$, $H_3:=\langle b,c\rangle$,
$H_4:=\langle a^p, b\rangle$, $H_5:=\langle a^p, c\rangle$, $H_6:=\langle a\rangle$, $H_7:=\langle a^p\rangle$, $H_8:=\langle ab\rangle$,
$H_9:=\langle a^2b\rangle$ are proper subgroups of $G$. Also $H_i$, $i=1$, $\ldots$, 5 intersect with each other non-trivially
and so they form $K_5$ as a subgraph of
$\mathscr{I}(G)$; $H_7$ is a subgroup of $H_4$, $H_6$, $H_8$, $H_9$; $H_6$, $H_8$, $H_9$ intersect with $H_5$ non-trivially.
It follows that
$\mathscr{I}(G)$ has a subgraph $\mathcal B_3$ as shown in Figure~\ref{fig:int f3}, so $\gamma(\mathscr{I}(G))>1$.

If $p>3$, then $G\cong \langle a,b,c,d~|~a^p=b^p=c^p=d^p=1, dc=acd, bd=db, ad=da, bc=cb, ac=ca\rangle$. Here $\langle a,b,c\rangle\cong
\mathbb Z_p\times \mathbb Z_p\times \mathbb Z_p$ is a subgroup of $G$ and so by Case 7 in the proof of Proposition~\ref{intersecting graph t1},
it follows that $\gamma(\mathscr{I}(G))>1$.
\end{enumerate}
\noindent \textbf{Sub case 2b:} $p=2$.
\begin{enumerate}[{\normalfont (i)}]
\item  $G\cong M_{2^4}$. Here $H_1:=\langle a\rangle$, $H_2:=\langle ab\rangle$, $H_3:=\langle a^2,b\rangle$,
$H_4:=\langle a^2\rangle$, $H_5:=\langle a^2b, c\rangle$, $H_6:=\langle a^4,b\rangle$, $H_7:=\langle a^4\rangle$, $H_8:=\langle a^4b\rangle$,
$H_9:=\langle b\rangle$ are the only subgroups of $M_{2^4}$. Further $H_7$ is a subgroup of $H_i$, $i=1$, $\ldots$, 6;
$H_8$, $H_9$ are proper subgroups of $H_6$;
no two remaining subgroups intersect non-trivially. It follows that
\begin{equation}\label{e101}
\mathscr{I}(G)\cong K_1+(K_6\cup \overline{K}_2).
\end{equation}
Note that $\mathscr{I}(G)$ is a graph obtained by attaching 2 pendent edges to any one of the vertices of $K_7$, so $\gamma(\mathscr{I}(G))=1$.

\item $G\cong \langle a,b,c~|~a^4=b^4=c^2=1, bab^{-1}=a^{-1}, a^2=b^2, bc=cb, ac=ca\rangle$. Here
$H_1:=\langle a, c\rangle$, $H_2:=\langle a\rangle$, $H_3:=\langle ac\rangle$,
$H_4:=\langle a^2\rangle$, $H_5:=\langle a^2, c\rangle$, $H_6:=\langle bc\rangle$, $H_7:=\langle b\rangle$, $H_8:=\langle b, c\rangle$
are proper subgroups of
$G$ and these subgroups intersect with each other non-trivially, so they form $K_8$ as a subgraph of $\mathscr{I}(G)$. It follows that $\gamma(\mathscr{I}(G))>1$.

\item $G\cong \langle a,b~|~a^8=b^2=1, bab^{-1}=a^{-1}\rangle$. Here
$H_1:=\langle a\rangle$, $H_2:=\langle a^2\rangle$, $H_3:=\langle a^4\rangle$,
$H_4:=\langle a^2, b\rangle$, $H_5:=\langle a^4, b\rangle$, $H_6:=\langle a^4, ab\rangle$, $H_7:=\langle a^4, a^2b\rangle$, $H_8:=\langle a^2, ab\rangle$
are proper subgroups of
$G$ and these eight subgroups intersect with each other non-trivially, so they form $K_8$ as a subgraph of $\mathscr{I}(G)$. It follows that $\gamma(\mathscr{I}(G))>1$.

\item $G\cong \langle a,b~|~a^8=b^2=1, b^{-1}ab=a^3\rangle$. Here
$H_1:=\langle a\rangle$, $H_2:=\langle a^2\rangle$, $H_3:=\langle a^4\rangle$,
$H_4:=\langle a^2, b\rangle$, $H_5:=\langle a^4, b\rangle$, $H_6:=\langle a^4, ab\rangle$, $H_7:=\langle a^4, a^2b\rangle$, $H_8:=\langle a^4, a^3b\rangle$
are proper subgroups of
$G$ and these eight subgroups intersect with each other non-trivially. Hence they form $K_8$ as a subgraph of $\mathscr{I}(G)$.
 It follows that $\gamma(\mathscr{I}(G))>1$.

\item $G\cong \langle a,b~|~a^8=b^2=1, b^{-1}ab=a^{-1}, b^2=a^4\rangle$. Here
$H_1:=\langle a\rangle$, $H_2:=\langle a^2\rangle$, $H_3:=\langle a^4\rangle$,
$H_4:=\langle b\rangle$, $H_5:=\langle a^2, b\rangle$, $H_6:=\langle ab\rangle$, $H_7:=\langle a^2b\rangle$, $H_8:=\langle a^3b\rangle$
are proper subgroups of
$G$ and these eight subgroups intersect with each other non-trivially, so they form $K_8$ as a subgraph of $\mathscr{I}(G)$. It follows that $\gamma(\mathscr{I}(G))>1$.
\end{enumerate}
 The remaining groups in this subcase are identical with the groups described in (ii), (iii), (iv), (v) of Subcase 2a.

\noindent\textbf{Case 3:} $\alpha=5$. By Theorem~\ref{501} (i),
$G$ has at least three subgroups of order $p^4$, say $H_i$, $i=1, 2, 3$  and at least three subgroups of order $p^3$, say
$H_i$, $i=4, 5, 6$. Here for each $i=1$, 2, $3$ and $j=4$, 5, $6$, $H_i$ and $H_j$ have a non-trivial intersection. For otherwise, $|H_iH_j|=p^k$, $k=7$ or
8, which is not possible. Let $H_7$ be a common  subgroup of order $p^2$ for both $H_4$ and $H_5$. Let $H_8$ be a subgroup of $H_7$ of order $p$.
Here $H_1$, $H_4$, $H_5$, $H_7$, $H_8$
intersects with each other non-trivially, so they form $K_5$ as a subgraph of $\mathscr{I}(G)$.
Now $H_2$ and $H_3$ have a common subgroup of order
$p^3$, say $H_9$; let $H_{10}$ be a subgroup of $H_9$ of order $p^2$; let $H_{11}$ be a subgroup of $H_{10}$ of order $p$.
Then $H_2$, $H_3$, $H_9$, $H_{10}$, $H_{11}$ intersect with each other non-trivially.
Therefore, $\mathscr{I}(G)$ contains a subgraph $\mathcal B_1$ as shown in Figure~\ref{fig:int f3}, so $\gamma(\mathscr{I}(G))>1$.

\noindent\textbf{Case 4:} $\alpha\geq6$. By Theorem~\ref{501} (i), (ii),
$G$ has at least two subgroups of order $p^{\alpha-1}$, say $H_1$, $H_2$. Let $H_i$, $i=3$, 4, 5, 6 be subgroups of $H_1$ of order $p^{\alpha-2}$,
$p^3$, $p^2$, $p$ respectively. Also let $H_i$, $i=7$, 8, 9, 10 be subgroups of $H_2$ of orders $p^{\alpha-2}$, $p^3$, $p^2$, $p$ respectively.
It follows that $H_1$, $H_3$, $H_4$, $H_5$, $H_6$ forms $K_5$ as a subgraph of  $\mathscr{I}(G)$ and $H_2$, $H_7$, $H_8$, $H_9$, $H_{10}$ forms another copy of $K_5$ as a subgraph of $\mathscr{I}(G)$.
Thus $\mathscr{I}(G)$ has a subgraph $\mathcal B_1$ as shown in Figure~\ref{fig:int f3}, so $\gamma(\mathscr{I}(G))>1$.

Now we investigate the projective-plane embedding of the intersection graphs of subgroups of $G$, when $\alpha\geq 4$. We need to consider the following cases:

\noindent\textbf{Case a:} $p>2$. Then by Theorem~\ref{501} (i),
$G$ has at least four subgroups of order $p^3$, let them be $H_i$, $i=1, 2, 3, 4$; and has at least
four subgroups of order $p^2$, say $B_i$, $i=1, 2, 3, 4$. Here $H_i\cap B_j\neq \emptyset$ for all $i,j=1, 2, 3, 4$ with $i \neq j$  and so they form $K_{4,4}$ as a subgraph of $\mathscr{I}(G)$ with
bipartition $X:=\{H_1$, $H_2$, $H_3$, $H_4\}$ and $Y:=\{B_1$, $B_2$, $B_3$, $B_4\}$. It follows that
$\overline{\gamma}(\mathscr{I}(G))>1$.

\noindent\textbf{Case b:} $p=2$. If $G$ has an unique subgroup of order $2$, then by Theorem~\ref{501} (i), (ii), $G\cong Q_{2^\alpha}$. Then $G$ has at least seven proper subgroups and they have a unique subgroup of order 2 in common. It follows that
$\mathscr{I}(G)$ has $K_7$ as a subgraph, so $\overline{\gamma}(\mathscr{I}(G))>1$. If $G\neq Q_{2^\alpha}$, then it has atleast three subgroups of order $p^{\alpha-1}$,
let them be $H_1$, $H_2$, $H_3$;
$G$ has atleast three subgroups, say $H_4$, $H_5$, $H_6$ of order $p^{\alpha-2}$; atleast two subgroups of order $p^{\alpha-3}$, say $H_7$, $H_8$. Here
$H_i\cap H_j\neq \emptyset$, for every $i=1$, 2, 3 and $j=4$, 5, 6, $i\neq j$; $H_7$, $H_8$ are subgroups of $H_i$, $i=1$, 2, 3. It follows that
$K_{3,5}$ is a subgraph of $\mathscr{I}(G)$ with bipartition $X:=\{H_1$, $H_2$, $H_3\}$ and $Y:=\{H_4$, $H_5$, $H_6$, $H_7$, $H_8\}$, so
$\overline{\gamma}(\mathscr{I}(G))>1$.

The proof follows by putting together all the cases.
\end{proof}

 If $G$ is the non-abelian group of order $pq$, where $p< q$ and $p$, $q$ are two distinct primes,
then by Theorem~\ref{1000}, $\gamma(\mathscr{I}(G))$ is planar and by \eqref{e8} it is acyclic.

Next we investigate the groups of order greater than $pq$.

Consider the semi-direct product $\mathbb Z_q \rtimes_{t} \mathbb Z_{p^{\alpha}} = \langle a,b | a^q= b^{p^{\alpha}}= 1, bab^{-1}= a^i,
{ord_{q}}(i)= p^t \rangle$, where $p$, $q$ are distinct primes with $p^t~|~(q-1)$, $t \geq 0$.
Then every semi-direct product $Z_q \rtimes Z_{p^{\alpha}}$ is  one of these types \cite[Lemma 2.12]{boh-reid}. So here after, when $t = 1$ we will suppress the subscript.

\begin{pro}\label{intersecting graph t4}
Let $G$ be a non-abelian group of order $p^2q$, where $p$ and $q$ are distinct primes. Then
\begin{enumerate}[{\normalfont (1)}]
\item $\mathscr{I}(G)$ is $K_5$-free if and only if $G$ is  either $\mathbb Z_q \rtimes_{2} \mathbb Z_{p^2}$, $\langle a, b, c~|~ a^p=b^p=c^q=1, ab=ba, cac^{-1}=b,
cbc^{-1}= a^1b^{l} \rangle$, where $\bigl(\begin{smallmatrix}
  0 & -1\\ 1 & l
\end{smallmatrix} \bigr)$ has order $q$ in $GL_2(p)$, $q|(p+1)$ or $A_4$.
\item The following are equivalent:
\begin{enumerate}[\normalfont (a)]
\item $G\cong \langle a, b, c~|~ a^p=b^p=c^q=1, ab=ba, cac^{-1}=b,
cbc^{-1}= a^{-1}b^{l} \rangle$, where $\bigl(\begin{smallmatrix}
  0 & -1\\ 1 & l
\end{smallmatrix} \bigr)$ has order $q$ in $GL_2(p)$, $q|(p+1)$ or $A_4$;
\item $\mathscr{I}(G)$ is $C_3$-free;
\item $\mathscr{I}(G)$ is acyclic;
\item $\mathscr{I}(G)$ is bipartite.
\end{enumerate}
\item $\mathscr{I}(G)$ is toroidal if and only if $G$ is  one of $\mathbb Z_3\rtimes \mathbb Z_4$, $\mathbb Z_5\rtimes \mathbb Z_4$ or
$\mathbb Z_9\rtimes \mathbb Z_2$, $\mathbb Z_{25}\rtimes \mathbb Z_2$.
\item $\mathscr{I}(G)$ is projective-planar if and only if $G$ is  either $\mathbb Z_3\rtimes \mathbb Z_4$ or $\mathbb Z_9\rtimes \mathbb Z_2$.
\end{enumerate}
\end{pro}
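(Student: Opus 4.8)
The plan is to run through the classical determination of the non-abelian groups of order $p^2q$ and, for each isomorphism type, describe its subgroup lattice precisely enough to determine $\mathscr{I}(G)$ or, at least, to exhibit a decisive subgraph of it. Up to isomorphism these groups fall into a short list of families according to which Sylow subgroup is normal: with the Sylow $q$-subgroup normal one has $\mathbb Z_q\rtimes\mathbb Z_{p^2}$ (with action of order $p$), $\mathbb Z_q\rtimes_2\mathbb Z_{p^2}$ (faithful action, requiring $p^2\mid q-1$) and $\mathbb Z_q\rtimes(\mathbb Z_p\times\mathbb Z_p)$, all requiring $p\mid q-1$; with the Sylow $p$-subgroup normal one has $\mathbb Z_{p^2}\rtimes\mathbb Z_q$ and the various groups $(\mathbb Z_p\times\mathbb Z_p)\rtimes\mathbb Z_q$ with a diagonalizable action (scalar, one trivial eigenvalue, or two distinct nontrivial eigenvalues), requiring $q\mid p-1$; and finally $\mathcal G_1=(\mathbb Z_p\times\mathbb Z_p)\rtimes\mathbb Z_q$ with an irreducible action, requiring $q\mid p+1$, which for $p=2$, $q=3$ is $A_4$. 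In every family I would first locate a subgroup lying inside many others — a central subgroup of order $p$ (the kernel of the action), or the unique minimal subgroup of a cyclic Sylow subgroup — since such a subgroup forces a large complete subgraph, and then record the few further subgroups that complete $\mathscr{I}(G)$.

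For the groups named in the statement the computation is explicit. A direct count gives $\mathscr{I}(\mathbb Z_q\rtimes\mathbb Z_{p^2})\cong K_{q+2}$ with one pendant vertex attached: the $q$ Sylow $p$-subgroups together with their common minimal subgroup form $K_{q+1}$, the unique subgroup of order $pq$ is adjacent to all vertices, and the normal subgroup of order $q$ is the pendant; similarly $\mathscr{I}(\mathbb Z_{p^2}\rtimes\mathbb Z_q)\cong K_{p+2}$ with $p^2$ pendant vertices, and in particular $\mathscr{I}(D_{2p^2})\cong K_{p+2}$ with pendants. Since attaching pendant edges changes neither $\gamma$ nor $\overline{\gamma}$, Theorem~\ref{genus 110} gives $\gamma(\mathscr{I}(G))=1$ exactly when $q+2\le7$ (resp. $p+2\le7$) and $\overline{\gamma}(\mathscr{I}(G))=1$ exactly when $q+2\le6$ (resp. $p+2\le6$); imposing the divisibility constraints together with $p$, $q$ prime then isolates $\mathbb Z_3\rtimes\mathbb Z_4$, $\mathbb Z_5\rtimes\mathbb Z_4$, $\mathbb Z_9\rtimes\mathbb Z_2$, $\mathbb Z_{25}\rtimes\mathbb Z_2$ for toroidality and $\mathbb Z_3\rtimes\mathbb Z_4$, $\mathbb Z_9\rtimes\mathbb Z_2$ for projective-planarity. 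For parts (1) and (2) one uses $\mathscr{I}(\mathcal G_1)\cong K_{1,p+1}\cup\overline{K}_{p^2}$ and $\mathscr{I}(A_4)\cong K_{1,3}\cup\overline{K}_4$ from \eqref{e11} and \eqref{e10}, which are forests and hence bipartite, acyclic, $C_3$-free and $K_5$-free, together with $\mathscr{I}(\mathbb Z_q\rtimes_2\mathbb Z_{p^2})\cong K_1+(K_1\cup qK_2)$ from \eqref{e9}, which is $K_5$-free but contains a triangle.

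For every remaining group one must exhibit a forbidden subgraph. In most cases this is immediate from the lattice description: as soon as eight subgroups share a common subgroup one gets $K_8$; two copies of $K_5$ (sharing at most an edge) give the minor-order obstruction $\mathcal B_1$ of Figure~\ref{fig:int f3}; a $K_{3,5}$, or a copy of $\mathcal A_1$ of Figure~\ref{fig:f2}, rules out torus and projective plane simultaneously; and for the projective-plane half of the argument in the borderline cases one instead points to the explicit obstructions of \cite{glov}, exactly as was done for $\mathbb Z_{p^2}\times\mathbb Z_p$ in Proposition~\ref{intersecting graph t1}. In this way the groups $\mathbb Z_q\rtimes_2\mathbb Z_{p^2}$, $\mathcal G_1$ and $A_4$ are seen to have planar $\mathscr{I}(G)$, while all other groups $\mathbb Z_q\rtimes\mathbb Z_{p^2}$, $\mathbb Z_q\rtimes(\mathbb Z_p\times\mathbb Z_p)$, $\mathbb Z_{p^2}\rtimes\mathbb Z_q$ and $(\mathbb Z_p\times\mathbb Z_p)\rtimes\mathbb Z_q$ fail both embedding conditions.

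The delicate point — and the main obstacle — is the small handful of genuinely tight cases, where $\mathscr{I}(G)$ has clique number only just below the critical value and contains no obvious $K_8$, $K_{3,5}$ or doubled $K_5$: chiefly $D_{12}$ of order $12$, the order-$18$ groups other than $D_{18}$, and the order-$20$ and order-$50$ groups other than $\mathbb Z_5\rtimes\mathbb Z_4$ and $\mathbb Z_{25}\rtimes\mathbb Z_2$. For these one has to analyse $\mathscr{I}(G)$ by hand: delete the pendant vertices and suppress the degree-two vertices coming from order-two subgroups, and then locate in the reduced graph one of the eight-vertex torus obstructions $\mathcal A_1$, $\mathcal A_2$, $\mathcal A_3$ of Theorem~\ref{genus 501} (and, where needed, a projective-plane obstruction of \cite{glov}). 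Assembling the planar, toroidal and non-embeddable cases, and recording along the way which of them contain $K_5$ or a triangle, then yields all four parts of the proposition.
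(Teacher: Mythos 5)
Your plan follows the paper's proof almost step for step: the same division of the non-abelian groups of order $p^2q$ according to which Sylow subgroup is normal, the same explicit descriptions of $\mathscr{I}(\mathbb Z_q\rtimes\mathbb Z_{p^2})$ as $K_{q+2}$ with a pendant vertex and of $\mathscr{I}(\mathbb Z_{p^2}\rtimes\mathbb Z_q)$ as $K_{p+2}$ with pendants (from which your genus arithmetic correctly isolates the four groups in parts (3)--(4)), the same appeal to the known planar graphs of $\mathbb Z_q\rtimes_2\mathbb Z_{p^2}$, $\mathcal G_1$ and $A_4$ for parts (1)--(2), and the same obstruction-hunting ($K_8$, $K_{3,5}$, $\mathcal A_1$, $\mathcal B_1$, or a Glover--Huneke--Wang graph) for the remaining families. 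Up to this point the proposal is sound and matches the paper.

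The genuine gap is exactly where you flag it, in the small ``delicate'' cases, and the repair you propose cannot work. Take $G=D_{12}=\mathbb Z_3\rtimes(\mathbb Z_2\times\mathbb Z_2)$, written as $\langle r,s\mid r^6=s^2=1,\ srs=r^{-1}\rangle$. Its fourteen proper nontrivial subgroups are $A=\langle r\rangle$, two copies $S_1,S_2$ of $S_3$, three Klein four-groups $V_1,V_2,V_3$, the subgroups $\langle r^2\rangle$ and $\langle r^3\rangle$, and six subgroups $\langle r^is\rangle$ of order two. One checks that $\{A,S_1,S_2,V_1,V_2,V_3\}$ is a $K_6$ (each $S_i$ meets each $V_j$ in a reflection), that $\langle r^2\rangle$ is joined exactly to $A,S_1,S_2$, that $\langle r^3\rangle$ is joined exactly to $A,V_1,V_2,V_3$, and that each $\langle r^is\rangle$ has degree two with adjacent neighbours. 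Deleting a vertex from the triangular embedding of $K_7$ in the torus gives $K_6$ embedded with one hexagonal face whose boundary visits all six vertices; labelling that hexagon $A,S_1,S_2,V_1,V_2,V_3$ in order, one may insert $\langle r^3\rangle$ in the hexagon joined to $A,V_1,V_2,V_3$, then insert $\langle r^2\rangle$ in the resulting pentagonal face $\langle r^3\rangle\mbox{--}A\mbox{--}S_1\mbox{--}S_2\mbox{--}V_1$ joined to $A,S_1,S_2$, and finally slide each degree-two vertex alongside the edge spanned by its two neighbours. This is a torus embedding, so $\mathscr{I}(D_{12})$ contains no torus obstruction and the step ``locate one of $\mathcal A_1,\mathcal A_2,\mathcal A_3$'' cannot be carried out; a similar computation for $\mathbb Z_3\times S_3$ of order $18$ reduces its intersection graph to $K_6$ minus an edge plus vertices of degree at most two, which is toroidal and even projective-planar. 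Note that the paper's own proof does not close these cases either: for $(p,q)=(2,3)$ it records only that $\mathscr{I}(D_{12})$ contains $K_5$, and its generic subgroup list for the groups $(\mathbb Z_p\times\mathbb Z_p)\rtimes\mathbb Z_q$ with diagonalizable action contains repetitions (or the whole group) when $p=3$ or when the action fixes one factor. So the missing work is not a routine verification; as the embeddings above show, completing it would actually force a revision of the lists in parts (3) and (4), and any correct write-up must treat these small groups by explicit embedding or non-embedding arguments rather than by the obstructions used elsewhere.
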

\begin{proof}
To prove the result we use the classification of groups of order $p^2q$ given in \cite[p.~76-80]{burn}. We have the following cases to consider.

\noindent\textbf{Case 1:} $p<q$:

\noindent\textbf{Case 1a:} $p \nmid (q-1)$. By Sylow's Theorem, there is no non-abelian group in this case.

\noindent\textbf{Case 1b:} $p \mid (q-1)$ but $p^2 \nmid (q-1)$. In this case, there are two non-abelian groups.

The first group is $G_1:= \mathbb Z_q \rtimes \mathbb Z_{p^2} = \langle a, b | a^q= b^{p^2}=1, bab^{-1}=a^i, ord_q(i)=p \rangle$.
We have $\langle a\rangle$, $\langle a^ib\rangle$, $i=1,2,\ldots,q$, $\langle b^p\rangle$ and
$\langle ab^p\rangle$ are the only proper subgroups of $G_1$. Here $\langle b^p\rangle$ is a subgroup of the remaining proper subgroups, except
$\langle a\rangle$. Also $\langle a\rangle$ is a subgroup of $\langle ab^p\rangle$. It follows that
\begin{equation}\label{e1}
 \mathscr{I}(G_1)\cong K_1+(K_1\cup K_{q+1}).
\end{equation}

Note that $q=5$ is not possible here, since $p=2$ is such that $p \mid (q-1)$ but $p^2 \mid (q-1)$.
Note that $\mathscr{I}(G)$ is a graph obtained by attaching one pendent edge to any one of the vertices of $K_{q+2}$. So  $\gamma(\mathscr{I}(G))=1$ if and only if $q=3$; $\overline{\gamma}(\mathscr{I}(G))=1$ if and only if $q=3$; $\mathscr{I}(G)$ contains $K_5$.

The second group is $G_2:= \langle a, b, c| a^q= b^p= c^p=1, bab^{-1}=a^i, ac=ca, bc=cb, ord_q(i)=p \rangle$. Here $H_1:=\langle a,b\rangle$,
$H_2:=\langle a,c\rangle$, $H_3:=\langle b,c\rangle$, $H_4:=\langle ab,c\rangle$, $H_5:=\langle a^2b,c\rangle$, $H_6:=\langle a^3b,c\rangle$,
$H_7:=\langle c\rangle$, $H_8:=\langle a^4b,c\rangle$ are proper subgroups of $G$. Also $H_i$, $i=3,4$,$\ldots , 8$ intersect with each other non-trivially
and so they form $K_6$
as a subgraph of $\mathscr{I}(G_2)$; $H_1$ and $H_2$ intersect non-trivially; also they intersect non-trivially with $H_3$, $H_4$, $H_5$.
It follows that $\mathscr{I}(G_2)$ has a
subgraph $\mathcal A_1$ as shown in Figure~\ref{fig:f2}, so $\gamma(\mathscr{I}(G_2))>1$ and $\overline{\gamma}(\mathscr{I}(G_2))>1$.

\noindent\textbf{Case 1c:} $p^2 | (q-1)$. In this case, we have both groups $G_1$ and $G_2$ from Case 1b together with the group $G_3:= \mathbb Z_q \rtimes_2 \mathbb Z_{p^2} = \langle a, b | a^q= b^{p^2}=1, bab^{-1}=a^i, ord_q(i)=p^2 \rangle$. Note that here $q=5$, $p=2$ is possible for the group  $G_1$. By~\eqref{e1}, $\gamma(\mathscr{I}(G_1))=1$, $\overline{\gamma}(\mathscr{I}(G_1))>1$.
$\mathscr{I}(G_2)$ is already discussed in Case 1b.  By Theorem~\ref{1000}, $\mathscr{I}(G_3)$ is planar and by \eqref{e9}, it contains $C_3$.

\noindent\textbf{Case 2:} $p > q$

\noindent\textbf{Case 2a:} $q \nmid (p^2 -1)$. Then there is no  non-abelian subgroups.

\noindent\textbf{Case 2b:} $q | (p-1)$. In this case, we have two groups.

The first one is $G_4:=\langle a,b | a^{p^2}= b^q=1, bab^{-1}, ord_{p^2}(i)=q \rangle$. By Sylow's Theorem, $G_4$ has a unique subgroup say $N$,
of order $p^2$ and has a unique subgroup $N'$ of order $p$; $G_4$ has $p^2$ Sylow $q$-subgroups of order $q$, say
$H_i$, $i=1,2\ldots,p^2$; it has $p$ subgroups of order $pq$, say $A_i=\langle a^p,a^ib\rangle,i=0,1,\ldots p-1$. These are the
only proper subgroups of $G_4$. Here $N'$ is a subgroup
of $N$ and $A_i$, $i=0,1,\ldots,p-1$; $H_i,H_{i+p},\ldots,H_{i+p(p-1)}$ are subgroups of $A_i$ for each $i=0,1,\ldots,p-1$. It follows that
$\mathscr{I}(G_4)$ is the graph obtained by attaching a single pendant edge to any $p$ vertices of $K_{p+2}$.
Thus, $\gamma(\mathscr{I}(G_4))=1$ if and only if $p=3$, $5$; $\overline{\gamma}(\mathscr{I}(G_4))=1$ if and only if $p=3$;$\mathscr{I}(G_4)$ contains $K_5$.

Next we have the family of groups $\langle a, b, c | a^p=b^p=c^q=1, cac^{-1}=a^i, cbc^{-1}=b^{i^t}, ab=ba, ord_p(i)=q \rangle$. There are $(q+3)/2$
 isomorphism types in this family (one for $t=0$ and one for each pair $\{ x, x^{-1} \}$ in ${F_p}^{\times}$. We will refer to all of these groups as $G_{5(t)}$
of order $p^2q$. Here $H_1:=\langle a,b\rangle$,
$H_2:=\langle a,c\rangle$, $H_3:=\langle a,bc\rangle$, $H_4:=\langle a,b^2c\rangle$, $H_5:=\langle ab, c\rangle$,
$H_6:=\langle b,c\rangle$, $H_7:=\langle a^2b, c\rangle$, $H_8:=\langle c\rangle$ are proper subgroups of $G$. Further $H_i$, $i=1$, $\ldots$, 6 intersect non-trivially with each other and so they form $K_6$
as a subgraph of $\mathscr{I}(G_{5(t)})$; $H_7$, $H_8$ intersect non-trivially and they intersect with $H_2$, $H_5$, $H_6$.
It follows that $\mathscr{I}(G_{5(t)})$ has a subgraph $\mathcal A_1$ as shown in Figure~\ref{fig:f2},
so $\gamma(\mathscr{I}(G_{5(t)}))>1$, $\overline{\gamma}(\mathscr{I}((G_{5(t)}))$ and $\mathscr{I}(G_{5(t)})$ contains $K_5$ as a subgraph.

\noindent\textbf{Case 2c:} $q| (p+1)$. In this case, we have only one group of order $p^2q$, given by $G_6:= (\mathbb Z_p \times \mathbb Z_p) \rtimes \mathbb Z_q
= \langle a, b,c| a^p=b^p=c^q=1, ab=ba, cac^{-1}=a^ib^j, cbc^{-1}=a^kb^l\rangle$, where $\bigl(\begin{smallmatrix}
  i & j\\ k & l
\end{smallmatrix} \bigr)$ has order $q$ in $GL_2(p)$.
\begin{itemize}
\item [(i) ] If $G_6$ has a subgroup of order $pq$, then $H_1:=\langle a,b\rangle$,
$H_2:=\langle a,c\rangle$, $H_3:=\langle a,bc\rangle$, $H_4:=\langle a,b^2c\rangle$, $H_5:=\langle ab, c\rangle$,
$H_6:=\langle b,c\rangle$, $H_7:=\langle a^2b, c\rangle$, $H_8:=\langle c\rangle$ are proper subgroups of $G$. Also $H_i$, $i=1$, $\ldots$, 6 intersect non-trivially with each other and so they form $K_6$
as a subgraph of $\mathscr{I}(G)$; $H_7$, $H_8$ intersect non-trivially and they intersect with $H_2$, $H_5$, $H_6$. Thus
$\mathscr{I}(G_6)$ has a subgraph $\mathcal A_1$ as shown in Figure~\ref{fig:f2}, so $\gamma(\mathscr{I}(G_6))>1$ and
$\overline{\gamma}(\mathscr{I}(G))>1$.
\item [(ii) ]If $G_6$ has no subgroup of order $pq$, then $G_6:=\langle a, b, c~|~ a^p=b^p=c^q=1, ab=ba, cac^{-1}=b,
cbc^{-1}= a^{1}b^{l} \rangle$, where $\bigl(\begin{smallmatrix}
  0 & -1\\ 1 & l
\end{smallmatrix} \bigr)$ has order $q$ in $GL_2(p)$. By Theorem~\ref{1000}, $\mathscr{I}(G_6)$ is planar and by \eqref{e11}, it is acyclic.
\end{itemize}

Note that if $(p, q)= (2, 3)$, the Cases 1 and 2 are not mutually exclusive. Up to isomorphism, there are three non-abelian groups of order 12:
$\mathbb Z_3 \rtimes \mathbb Z_4$, $D_{12}$ and $A_4$. Here the intersection graph of subgroups of $\mathbb Z_3 \rtimes \mathbb Z_4$ (the group $G_1$)
 and $D_{12}$ (the group $G_2$) contains $K_5$. But by Theorem~\ref{1000}, $\mathscr{I}(A_4)$ is planar and by \eqref{e10}, it is acyclic.

 Combining all the cases together, the proof follows.
\end{proof}

\begin{pro}\label{intersecting graph t5}
 If $G$ is a non-abelian group of order $p^ \alpha q$, where $p$, $q$ are distinct primes and $\alpha \geq 3$, then $\gamma(\mathscr{I}(G))>1$,
$\overline{\gamma}(\mathscr{I}(G))>1$ and $\mathscr{I}(G)$ contains $K_5$.
\end{pro}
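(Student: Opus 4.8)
The plan is to show that any non-abelian group $G$ of order $p^\alpha q$ with $\alpha\geq 3$ contains enough subgroups forming one of the known forbidden configurations. The key observation is that a Sylow $p$-subgroup $P$ of $G$ has order $p^\alpha$ with $\alpha\geq 3$, and by Theorem~\ref{501}(i) every $p$-group of order $p^\alpha$ has at least one subgroup of each order $p^s$ with $1\leq s\leq \alpha$, and in fact the number of subgroups of order $p^s$ is $\equiv 1\pmod p$. So inside $P$ I can always find a chain of subgroups $1<Q_1<Q_2<Q_3\leq P$ with $|Q_i|=p^i$; these three proper subgroups of $G$ pairwise intersect non-trivially, giving a $K_3$. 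The aim is to extend this to a $\mathcal{B}_1$ (two copies of $K_5$ sharing a vertex, or the relevant minor from Figure~\ref{fig:int f3}) or an $\mathcal{A}_1$, which by Theorems~\ref{genus 501} and~\ref{genus 500} forces $\gamma>1$ and $\overline{\gamma}>1$; and since $\mathcal{A}_1$ and $\mathcal{B}_1$ both contain $K_5$, this simultaneously handles the $K_5$-containment claim.

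First I would split into cases according to $\alpha$. When $\alpha\geq 5$, the argument essentially mirrors Case 4 in the proof of Proposition~\ref{intersecting graph t2}: a Sylow $p$-subgroup $P$ of order $p^\alpha\geq p^5$ has, by Theorem~\ref{501}(i),(ii), at least two subgroups of order $p^{\alpha-1}$, and inside each of these I can find a descending chain of four proper subgroups, producing two copies of $K_5$ glued along... actually, to get $\mathcal{B}_1$ I want the two $K_5$'s to share exactly one vertex, which I can arrange by taking the two $K_5$'s to live inside $P$ using the two distinct maximal subgroups, both containing the minimal subgroup when $P$ has a unique subgroup of order $p$ (i.e. $P$ cyclic or $P\cong Q_{2^\alpha}$), and otherwise choosing disjoint-enough chains. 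For $\alpha=4$ and $\alpha=3$ I would similarly work inside $P$: a non-abelian Sylow $p$-subgroup, or an abelian one of rank $\geq 2$, has at least $p+1\geq 3$ subgroups of order $p$ and several subgroups of order $p^2$, and combined with subgroups of the form $HK$ where $H\leq P$ and $K$ is a Sylow $q$-subgroup (or products of $q$-subgroups with $p$-subgroups), one builds the $\mathcal{A}_1$ subgraph exactly as in Cases 4, 5 of Proposition~\ref{intersecting graph t1} and Case 1b of Proposition~\ref{intersecting graph t4}. The potentially delicate sub-case is when the Sylow $p$-subgroup is cyclic and $\alpha=3$: then $P$ itself contributes only the chain $K_3$, so I must use the $q$-part, producing subgroups $P$, the subgroups of order $p$, $p^2$, $p^3$, the Sylow $q$-subgroup $Q$, and subgroups of order $pq$, $p^2q$ — reproducing the configuration from Sub case 2b of Proposition~\ref{intersecting graph t100} (where $\mathscr{I}\cong K_3+(K_4\cup K_1)$ and contains $\mathcal{A}_1$ once there are enough order-$pq$ and order-$p^2q$ subgroups). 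One must check that such subgroups of order $pq$, $p^2q$ actually exist: if $G$ has a normal Sylow $p$-subgroup this is automatic by taking products with $Q$; if the Sylow $q$-subgroup is normal, then $QH$ for $H\leq P$ of order $p$, $p^2$ are subgroups; and one of these two must be normal since $G$ is solvable (order $p^\alpha q$).

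The main obstacle I anticipate is the bookkeeping to guarantee, in every sub-case, at least eight distinct proper subgroups with the precise adjacency pattern of $\mathcal{A}_1$ or $\mathcal{B}_1$ — in particular ensuring the subgroups I name are genuinely distinct and genuinely proper, and that the required intersections are non-trivial (which for products $HK$ follows from $|HK|=|H||K|$ when $\gcd(|H|,|K|)=1$, so $HK$ contains both $H$ and $K$). The cleanest uniform route is probably: (1) dispatch $\alpha\geq 5$ by the two-$K_5$'s-inside-$P$ argument giving $\mathcal{B}_1$; (2) for $\alpha=3,4$, observe that either $P$ is non-cyclic — then $P$ alone or $P$ together with one Sylow $q$-subgroup yields $\mathcal{A}_1$ by the computations already done in Propositions~\ref{intersecting graph t1} and~\ref{intersecting graph t2} — or $P$ is cyclic, in which case I invoke the normality of one of the Sylow subgroups (solvability) to manufacture the needed subgroups of orders $p^iq$ and reduce to the $\mathbb{Z}_{p^\alpha q}$-type configuration from Proposition~\ref{intersecting graph t100}, Cases 2a–2b, which contains $\mathcal{A}_1$ for $\alpha\geq 3$. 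In all branches the exhibited subgraph contains $K_5$, and contains $\mathcal{A}_1$ or $\mathcal{B}_1$, so $\gamma(\mathscr{I}(G))>1$ and $\overline{\gamma}(\mathscr{I}(G))>1$ follow from the remark after Theorem~\ref{genus 500} and from $\mathcal{A}_1$ containing $K_{3,5}$, completing the proof.
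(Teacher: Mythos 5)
Your overall strategy (restrict attention to groups whose Sylow or normal subgroups have ``good'' intersection graphs, then exhibit $\mathcal A_1$, $\mathcal B_1$ or $K_8$) is the same as the paper's, but several of the steps you lean on are false as stated, and they occur exactly where the real work of the proof lies. First, your claim that in a solvable group of order $p^\alpha q$ ``one of these two [Sylow subgroups] must be normal'' is wrong: $S_4$ has order $2^3\cdot 3$ and neither its Sylow $2$- nor its Sylow $3$-subgroup is normal. The paper has to treat $S_4$ as a separate case (two copies of $D_8$ giving $\mathcal B_1$), and for $p<q$ it needs a genuine Sylow-counting argument, not solvability, to conclude $G\cong \mathbb Z_q\rtimes P$. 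Second, for $\alpha\geq 5$ with $P$ cyclic your construction collapses: by Theorem~\ref{501}(ii) a cyclic $P$ has a \emph{unique} subgroup of each order, hence a unique maximal subgroup, so there are no ``two distinct maximal subgroups'' and no two $K_5$'s inside $P$; indeed $\mathscr{I}(\mathbb Z_{p^\alpha})\cong K_{\alpha-1}$ is toroidal for $\alpha\leq 8$, so the obstruction must come from subgroups involving $q$, which your plan for this branch never produces. The paper handles this by induction, reducing to $\mathbb Z_{p^3}\rtimes\mathbb Z_q$ and writing down an explicit $K_8$ there.

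Third, the configuration you invoke for the cyclic case at $\alpha=3,4$ does not do what you claim: $K_3+(K_4\cup K_1)=\mathscr{I}(\mathbb Z_{p^4q})$ does \emph{not} contain $\mathcal A_1$ --- the paper exhibits a torus embedding of it (Figure~\ref{fig:int f4}), and $\mathscr{I}(\mathbb Z_{p^3q})\cong K_2+(K_3\cup K_1)$ is even planar. The entire content of the proposition in this branch is that the \emph{non-abelian} structure forces extra subgroups (e.g.\ several distinct subgroups of order $p^2q$ all containing the unique subgroup of order $p^2$), and verifying their existence and distinctness is precisely the unfinished bookkeeping you defer with ``once there are enough order-$pq$ and order-$p^2q$ subgroups.'' Similarly, for $\alpha=3,4$ with $P$ non-cyclic, ``$P$ alone \dots yields $\mathcal A_1$'' fails for $P\cong Q_8$, $M_8$, $M_{27}$, $M_{125}$, $M_{16}$, $\mathbb Z_{p^2}\times\mathbb Z_p$ ($p\leq 5$), whose intersection graphs are planar or toroidal by Propositions~\ref{intersecting graph t1} and~\ref{intersecting graph t2}; and forming the mixed subgroups $HK$ requires a normality statement you do not establish. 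The paper's proof is an induction on $\alpha$ that in each case pins $P$ (or a normal subgroup $N$ of order $p^{\alpha-1}q$) down to a short explicit list and then lists eight concrete subgroups forming $K_8$ (or a copy of $\mathcal A_1$ or $\mathcal B_1$); until you supply those explicit configurations in each branch, the argument is incomplete.
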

\begin{proof}
 Let $P$ denote a Sylow p-subgroup of $G$. We shall prove this result by induction on $\alpha$. First we prove this result when
$\alpha =3$. If $p > q$, then $n_p(G)=1$, by Sylow's theorem and our group $G \cong P \rtimes \mathbb Z_q$. Suppose $\gamma(\mathscr{I}(P))>1$,
$\overline{\gamma}(\mathscr{I}(P))>1$ and
$\mathscr{I}(P)$ contains $K_5$,
then the same holds for $\mathscr{I}(G)$ also. So it is enough to consider the cases when $\mathscr{I}(P)$ is one of planar, toroidal, projective-planar,
$K_5$-free, $C_3$-free or bipartite.
By Propositions~\ref{intersecting graph t100}, \ref{intersecting graph t1}, \ref{intersecting graph t2} and Theorem~\ref{1000},
 $P$ is isomorphic to one of $\mathbb Z_{p^3}$, $\mathbb Z_{p^2}\times \mathbb Z_p$ ($p=2$, $3$, $5$) or $M_{p^3}$ ($p=3$, $5$).
\begin{itemize}
\item If $P \cong \mathbb Z_{p^3}$, then $G \cong \mathbb Z_{p^3} \rtimes \mathbb Z_q=\langle a,b~|~a^{p^3}=b^q=1, bab^{-1}, ord_{p^3}(i)=q\rangle$.
Here $H_1:=\langle a \rangle$, $H_2:=\langle a^p, b \rangle$, $H_3:=\langle a^{p^2}, b\rangle$, $H_4:=\langle a^p \rangle$,
$H_5:=\langle a^p, ab\rangle$, $H_6:=\langle a^{p^2}, ab\rangle$, $H_7:=\langle a^p, a^2b\rangle$, $H_8:=\langle a^{p^2} \rangle$ are proper subgroups of $G$.
Further $H_8$ is a subgroup of $H_i$, for every $i=1$, $\ldots$, 7.
It follows that they form $K_8$ as a subgraph of $\mathscr{I}(G)$, so $\gamma(\mathscr{I}(G))>1$ and $\overline{\gamma}(\mathscr{I}(G))>1$.
\item If $P\cong \mathbb Z_{p^2}\times \mathbb Z_p$, $p=2$, $3$, $4$, then $G\cong (\mathbb Z_{p^2}\times \mathbb Z_p)\rtimes \mathbb Z_q$. Here $H_1:=\langle a,c\rangle$, $H_2:=\langle ab,c\rangle$, $H_3:=\langle a^p,b,c\rangle$,
$H_4:=\langle a^p,c\rangle$, $H_5:=\langle a\rangle$, $H_6:=\langle ab\rangle$, $H_7:=\langle a^p,b\rangle$, $H_8:=\langle a^p\rangle$ are subgroups
of $G$, where $\langle a, b\rangle=P$ and $\langle c\rangle=\mathbb Z_q$. Also $H_8$ is a subgroup of $H_i$, $i=1$, $\ldots$, 7. Thus $\mathscr{I}(G)$ has $K_8$
as a subgraph, so $\gamma(\mathscr{I}(G))>1$ and $\overline{\gamma}(\mathscr{I}(G))>1$.
\item If $P\cong M_{p^3}$, $p=3$, $5$,
then $G\cong M_{p^3}\rtimes \mathbb Z_q$. Here  $H_1:=\langle a,c\rangle$, $H_2:=\langle ab,c\rangle$, $H_3:=\langle a^p,b,c\rangle$,
$H_4:=\langle a^2,c\rangle$, $H_5:=\langle a\rangle$, $H_6:=\langle ab\rangle$, $H_7:=\langle a^p,b\rangle$, $H_8:=\langle a^p\rangle$ are subgroups
of $G$, where $\langle a, b\rangle=P$ and $\langle c\rangle=\mathbb Z_q$. Also $H_8$ is a subgroup of $H_i$, $i=1$, $\ldots$, 7. Therefore, $\mathscr{I}(G)$ has $K_8$
as a subgraph and so $\gamma(\mathscr{I}(G))>1$ and $\overline{\gamma}(\mathscr{I}(G))>1$.
\end{itemize}

Now, let us consider the case $p<q$ and $(p,q)\neq (2,3)$. $n_q(G)=p$ is not possible. If $n_q= p^2$, then $q|(p+1)(p-1)$, which implies that $q|(p+1)$ or
$q|(p-1)$. But this only leaves $p^3q-p^3(q-1)=p^3$ elements and our Sylow p-subgroup must be normal, a case we already considered. Therefore,
the only remaining possibility is that $G \cong \mathbb Z_q \rtimes P$. Suppose $\gamma(\mathscr{I}(P))>1$,
$\overline{\gamma}(\mathscr{I}(P))>1$ and
$\mathscr{I}(P)$ contains $K_5$,
then the same holds for $\mathscr{I}(G)$ also. So it is enough to consider the cases when $\mathscr{I}(P)$ is one of planar, toroidal, projective-planar,
$K_5$-free, $C_3$-free and bipartite.
 By Propositions~\ref{intersecting graph t100}, \ref{intersecting graph t1}, \ref{intersecting graph t2}  and Theorem~\ref{1000},
$P$ is isomorphic to one of $\mathbb Z_{p^3}$, $\mathbb Z_{p^2}\times \mathbb Z_p$ ($p=2$, $3$, $5$), $Q_8$, $M_8$ or $M_{p^3}$.
\begin{itemize}
\item If $P\cong \mathbb Z_{p^3}$, then $G \cong \mathbb Z_q \rtimes \mathbb Z_{p^3}=\langle a,b~|~a^q=b^3=1, bab^{-1}=a^i, ord_q(i)=p\rangle$.
Here $H_1:=\langle ab^p\rangle$, $H_2:=\langle b\rangle$, $H_3:=\langle ab\rangle$,
$H_4:=\langle a^2b\rangle$, $H_5:=\langle a^3b\rangle$, $H_6:=\langle ab^{p^2}\rangle$, $H_7:=\langle b^p\rangle$,
$H_8:=\langle b^{p^2}\rangle$ are proper subgroups
of $G$. Also $H_8$ is a subgroup of $H_i$, $i=1$, $\ldots$, $7$. Thus $\mathscr{I}(G)$ has $K_8$
as a subgraph and so $\gamma(\mathscr{I}(G))>1$ and $\overline{\gamma}(\mathscr{I}(G))>1$.
\item If $P\cong \mathbb Z_{p^2}\times \mathbb Z_p$ or $M_{p^3}$, then $G\cong \mathbb Z_q\rtimes (\mathbb Z_{p^2}\times \mathbb Z_p)$ or $\mathbb Z_q\rtimes M_{p^3}$.
Here $H_1:=\langle a,c\rangle$, $H_2:=\langle ab,c\rangle$, $H_3:=\langle a^p,b,c\rangle$,
$H_4:=\langle a^p,c\rangle$, $H_5:=\langle a\rangle$, $H_6:=\langle ab\rangle$, $H_7:=\langle a^p,b\rangle$, $H_8:=\langle a^p\rangle$ are subgroups
of $G$, where $\langle a, b\rangle=P$ and $\langle c\rangle=\mathbb Z_q$. Also $H_8$ is a subgroup of $H_i$, $i=1$, $\ldots$, 7. Thus $\mathscr{I}(G)$ has $K_8$
as a subgraph, so $\gamma(\mathscr{I}(G))>1$ and $\overline{\gamma}(\mathscr{I}(G))>1$.

\item If $P \cong Q_8$, then $G\cong \mathbb Z_q\rtimes Q_8$.  Here $H_1:=\langle a,c\rangle$, $H_2:=\langle b,c\rangle$, $H_3:=\langle ab,c\rangle$,
$H_4:=\langle a^2,c\rangle$, $H_5:=\langle a\rangle$, $H_6:=\langle b\rangle$, $H_7:=\langle ab\rangle$, $H_8:=\langle a^2\rangle$ are proper subgroups
of $G$, where $\langle a, b\rangle=P$ and $\langle c\rangle=\mathbb Z_q$. Also $H_8$ is a subgroup of $H_i$, $i=1$, $\ldots$, 7. Thus these eight subgroups
forms $K_8$
as a subgraph of $\mathscr{I}(G)$ and this implies that $\gamma(\mathscr{I}(G))>1$ and $\overline{\gamma}(\mathscr{I}(G))>1$.
\item If $P \cong M_8$. Here $\mathbb Z_q\rtimes M_8$,
$H_1:=\langle a,c\rangle$, $H_2:=\langle a^2,ab,c\rangle$, $H_3:=\langle a^2,b,c\rangle$,
$H_4:=\langle a^2,c\rangle$, $H_5:=\langle a\rangle$, $H_6:=\langle a^2,b\rangle$, $H_7:=\langle a^2,ab\rangle$, $H_8:=\langle a^2\rangle$ are proper subgroups
of $G$, where $\langle a, b\rangle=P$ and $\langle c\rangle=\mathbb Z_q$.
Also $H_8$ is a subgroup of $H_i$, $i=1$ to 7. Thus these eight subgroups form $K_8$
as a subgraph of $\mathscr{I}(G)$ and so $\gamma(\mathscr{I}(G))>1$, $\overline{\gamma}(\mathscr{I}(G))>1$.
\end{itemize}
If $(p,q)= (2, 3)$,
then $G \cong S_4$. In this case, $G$ has at least two copies of $D_8$. So by Figure~\ref{int f1}(c), $D_8$ together with its proper subgroups form $K_5$ as a subgraph of $\mathscr {I}(G)$. Then $\mathscr{I}(G)$ has a subgraph $\mathcal B_1$ as shown in Figure~\ref{fig:int f3}, so $\gamma(\mathscr{I}(G))>1$ and $\overline{\gamma}(\mathscr{I}(G))>1$.
Thus from the above arguments the result is true when $\alpha =3$.

Assume that the result is true for all non-abelian group of order $p^mq$, with $m< \alpha$. We prove the result when $\alpha >3$. If $n_p(G)=1$,
then $G\cong P \rtimes \mathbb Z_q$. Suppose $\gamma(\mathscr{I}(P))>1$,
$\overline{\gamma}(\mathscr{I}(P))>1$ and
$\mathscr{I}(P)$ contains $K_5$,
then the same holds for $\mathscr{I}(G)$ also. So it is enough to consider the cases when $\mathscr{I}(P)$ is one of planar, toroidal, projective-planar,
$K_5$-free, $C_3$-free and bipartite.
By Propositions~\ref{intersecting graph t100}, \ref{intersecting graph t1},
\ref{intersecting graph t2} and Theorem~\ref{1000}, $P \cong
\mathbb Z_{p^ \alpha}$ or $M_{2^4}$.
\begin{itemize}
\item If $P\cong \mathbb Z_{p^\alpha}$, then $G$ has a subgroup
$\langle a^p, b \rangle \cong \mathbb Z_{p^3} \rtimes \mathbb Z_q$. So by induction hypothesis
$\gamma(\mathscr{I}( \langle a^p, b \rangle))>1$, $\overline{\gamma}(\mathscr{I}( \langle a^p, b \rangle))>1$, and so $\gamma(\mathscr{I}(G))>1$ and $\overline{\gamma}(\mathscr{I}(G))>1$.
\item  If $P\cong M_{2^4}$, then by \eqref{e101},
$P$ together with its proper subgroups form $K_8$ as a subgraph of $\mathscr{I}(G)$ and so $\overline{\gamma}(\mathscr{I}(G))>1$ and $\gamma(\mathscr{I}(G))>1$.
\end{itemize}
Let $n_p(G) \neq 1$. Since $G$ is solvable, $G$ has a normal subgroup
$N$ of order $p^{\alpha -1}q$.
Suppose $\gamma(\mathscr{I}(N))>1$,
$\overline{\gamma}(\mathscr{I}(N))>1$ and
$\mathscr{I}(N)$ contains $K_5$,
then the same holds for $\mathscr{I}(G)$ also. So it is enough to consider the cases when $\mathscr{I}(N)$ is one of planar, toroidal, projective-planar,
$K_5$-free, $C_3$-free and bipartite.
By Propositions~\ref{intersecting graph t100}, \ref{intersecting graph t1},
\ref{intersecting graph t4} and Theorem~\ref{1000},
 $N\cong \mathbb Z_{p^3q}$.

 Suppose $G$ has two elements, say $a$, $b$ of orders $p$, $q$ respectively with $b\in N$, $a\notin N$, then
 $H:=\langle a,b\rangle$ is a proper subgroup of $G$. Consider $N$ together with its proper subgroups
$H_1$, $H_2$, $H_3$, $H_4$, $H_5$, $H_6$, $H_7$, of order $p$, $p^2$, $p^3$, $pq$, $p^2q$, $p^3q$, $q$ respectively.
Here $H_i\cap H_j\neq \{e\}$, for every $i$, $j=1$ to $6$; $H_7$ is a subgroup of $H_i$, $i=4$, 5, 6,
$H$; $H\cap H_i=H_7$, $i=4$, 5, 6. It follows that $\mathscr{I}(G)$ has a subgraph $\mathcal A_1$ as shown in Figure~\ref{fig:f2},
so $\gamma(\mathscr{I}(G))>1$ and $\overline{\gamma}(\mathscr{I}(G))>1$.

Suppose every subgroup of $G$ of order $p$ is  contained in $N$, then $G$ has at least three Sylow $p$-subgroups, let them be $P_1$, $P_2$, $P_3$. Consider $N$ together with its proper subgroups
$H_1$, $H_2$, $H_3$, $H_4$, $H_5$, $H_6$ of order $p$, $p^2$, $p^3$, $pq$, $p^2q$, $p^3q$, respectively. Here $H_1$ is a subgroup of $H_i$, $i=2$, $\ldots$, 6, $N$, $P_1$, so they form $K_8$ as a subgraph of $\mathscr{I}(G)$. Therefore, $\gamma(\mathscr{I}(G))>1$ and $\overline{\gamma}(\mathscr{I}(G))>1$.

Thus the result is true when $\alpha>3$, it follows that the result is true for all $\alpha\geq 3$.
\end{proof}

\begin{pro}\label{intersecting graph t6}
 If $G$ is a non-abelian group of order $p^2q^2$, where $p$, $q$ are distinct primes, then $\gamma(\mathscr{I}(G))>1$,  $\overline{\gamma}(\mathscr{I}(G))>1$ and $\mathscr{I}(G)$ contains $C_3$.  Also  $\mathscr{I}(G)$ is $K_5$-free if and only if $G \cong \langle a,b,c~|~a^p=b^p=c^{q^2}=1, ab=ba, cac^{-1}=ab^{-1}, cbc^{-1}=ab^l\rangle$,
 where $\bigl(\begin{smallmatrix}
  0 & -1\\ 1 & l
\end{smallmatrix} \bigr)$ has order $q^2$ in $GL_2(p)$, $q^2~|~(p+1)$.

\end{pro}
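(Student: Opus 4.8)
The plan is to follow the pattern of the proofs of Propositions~\ref{intersecting graph t4} and \ref{intersecting graph t5}: fix the classification of the non-abelian groups of order $p^2q^2$ and, isomorphism type by isomorphism type, exhibit at most eight proper subgroups realizing one of the standard obstructions. Two structural reductions come first. Since $p^2q^2$ has the form $\pi^{a}\rho^{b}$, every such $G$ is solvable; and a short Sylow count shows that, unless $\{p,q\}=\{2,3\}$, one Sylow subgroup is normal, so $G\cong Q\rtimes P$ with, say, $Q$ a normal Sylow $q$-subgroup of order $q^{2}$ and $P$ a Sylow $p$-subgroup of order $p^{2}$ acting non-trivially on $Q$ (the case $\{p,q\}=\{2,3\}$, i.e.\ order $36$, is finite and is treated by inspecting the groups of that order individually). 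Secondly, if $G$ has a proper subgroup $H$ of order $p^{2}q$ or $pq^{2}$ with $\gamma(\mathscr I(H))>1$, then at once $\gamma(\mathscr I(G))>1$ and $\overline\gamma(\mathscr I(G))>1$; and if $H$ is non-abelian of such an order but is not one of the three $K_5$-free exceptions of Proposition~\ref{intersecting graph t4}, then $\mathscr I(H)$, hence $\mathscr I(G)$, contains $K_5$. Invoking Proposition~\ref{intersecting graph t4} and Theorem~\ref{1000}, this reduces the problem to the short list of types in which every proper subgroup of order $p^{2}q$ or $pq^{2}$ is abelian or lies among the ``planar'' groups, and it is on this short list that the genuine work is done.

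The triangle is uniform: $Q$ is a non-trivial normal subgroup, so for any element $x$ of order $p$ the set $Q\langle x\rangle$ is a subgroup of order $q^{2}p$, and for any subgroup $Q_{0}\le Q$ of order $q$ the chain $Q_{0}\subsetneq Q\subsetneq Q\langle x\rangle$ is a triangle in $\mathscr I(G)$; thus $\mathscr I(G)$ always contains $C_3$. For the genus statements on the remaining types I would, in each case, write down eight subgroups of $G$ — selected from $Q$, from the subgroups of $P$ of orders $p$ and $p^{2}$, from $Q\langle x\rangle$, and from products of subgroups of $P$ with subgroups of $Q$ whenever these are again subgroups — arranged so that they either all contain a common subgroup of prime order, giving $K_8\subseteq\mathscr I(G)$ (so $\gamma>1$, and $\overline\gamma>1$ because $K_7\subseteq K_8$); or split into two quintuples of pairwise-intersecting subgroups, giving the obstruction $\mathcal B_1$ of Figure~\ref{fig:int f3}, which is both a toroidal and a projective-plane obstruction; or form a $K_6$ together with a correctly attached extra edge, giving the obstruction $\mathcal A_1$ of Figure~\ref{fig:f2}, whose $K_{3,5}$ likewise forces $\overline\gamma>1$. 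In each such configuration the ambient $K_8$ or $K_6$, or the $K_5$ sitting inside $\mathcal B_1$, is already a $K_5$ of $\mathscr I(G)$, which yields the ``only if'' half of the $K_5$-freeness statement for that type; the few types whose witness is instead a $\mathcal B_2$, $\mathcal B_3$ or $\mathcal B_4$ would need a separate $K_{3,5}$, $K_{4,4}$ or $K_7$ for $\overline\gamma>1$, as in Proposition~\ref{intersecting graph t2}.

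Finally one isolates the unique exceptional family, namely the group $G$ displayed in the statement (the group $\mathcal G_2$ of Theorem~\ref{1000}). A direct computation of its subgroup lattice — the Sylow $p$-subgroup $N\cong\mathbb Z_p\times\mathbb Z_p$ is normal and unique, the $p^{2}$ Sylow $q$-subgroups are cyclic of order $q^{2}$ with pairwise trivial intersections, there is a unique subgroup of order $p^{2}q$, and there are no subgroups of order $pq$ or $pq^{2}$ — reproduces the description $\mathscr I(G)\cong K_1+(K_{1,p+1}\cup p^{2}K_2)$ of \eqref{33}; since a single vertex joined to a forest contains no $K_4$, this graph is $K_5$-free, and its embedding behaviour is read off directly from this explicit form. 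The main obstacle I expect is not conceptual but the size of the case analysis: the classification of groups of order $p^2q^2$ splits according to $(Q,P)\in\{\mathbb Z_{q^{2}},\mathbb Z_q\times\mathbb Z_q\}\times\{\mathbb Z_{p^{2}},\mathbb Z_p\times\mathbb Z_p\}$ and the image of $P$ in $\operatorname{Aut}(Q)$, and in every branch one must pin that image down closely enough to be certain the eight chosen subgroups are distinct and intersect exactly as claimed; the overlap case $\{p,q\}=\{2,3\}$ and the strikingly sparse family $\mathcal G_2$ are where the argument demands the most care.
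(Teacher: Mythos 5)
Your proposal follows essentially the same route as the paper: a Sylow count showing one Sylow subgroup is normal except when $\{p,q\}=\{2,3\}$ (order $36$, handled by inspecting the nine groups individually), then a case split over the four types of $P\rtimes Q$ in which explicit eight-subgroup configurations realize $K_8$, $\mathcal A_1$ or $\mathcal B_1$, with the single $K_5$-free exception $\mathcal G_2$ identified via \eqref{33}. Your uniform chain $Q_0\subsetneq Q\subsetneq Q\langle x\rangle$ for the $C_3$ claim and the more systematic reduction through subgroups of order $p^2q$ or $pq^2$ via Proposition~\ref{intersecting graph t4} are minor organizational refinements of the paper's argument, not a different method.
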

\begin{proof}
 We use the classification of groups of order $p^2q^2$ given in \cite{lin}.

 Let $P$ and $Q$ denote a Sylow $p$, $q$-subgroups of $G$ respectively,
 with out loss of generality, we assume that $p > q$. By Sylow's theorem,  $n_p(G)=1, q, q^2$. But $n_p(G)= q$ is not possible,
since $p> q$. If $n_p(G)= q^2$, then $p~|~(q+1)(q-1)$, this implies that $p~|~(q+1)$, which is true only
 when $(p, q)= (3, 2)$.

\noindent \textbf{Case 1:} $(p, q) \neq (3, 2)$. Then $G \cong P \rtimes Q$.

\noindent\textbf{Subcase 1a:} If $G \cong \mathbb Z_{p^2} \rtimes \mathbb Z_{q^2}= \langle a, b| a^{p^2}=b^{q^2}=1, bab^{-1}=a^i, i^{q^2} \equiv 1 (\mbox{mod}~ p^2) \rangle$, then we have
$H:=\langle a^p, b\rangle\cong \mathbb Z_p\rtimes \mathbb Z_{q^2}$, so by \eqref{e1}, $H$ together with its proper subgroups forms $K_8$ as a subgraph of
$\mathscr{I}(G)$. Hence $\gamma(\mathscr{I}(G))>1$ and $\overline{\gamma}(\mathscr{I}(G))>1$.

\noindent \textbf{Subcase 1b:} If $G \cong \mathbb Z_{p^2} \rtimes (\mathbb Z_q \times \mathbb Z_q)$, then $H_1:=\langle a \rangle=\mathbb Z_{p^2}$, $H_2:=\langle a^p \rangle$,
$H_3:=\langle a, c \rangle$, $H_4:=\langle a^p, b \rangle$, $H_5:=\langle a, b \rangle$, $H_6:=\langle a^p, c\rangle$,
$H_7:=\langle b,c\rangle=\mathbb Z_q\times \mathbb Z_q$,
$H_8:=\langle a^p,b,c\rangle$ are proper subgroups of $G$, where $\langle a\rangle=\mathbb Z_{p^2}$ and $\langle b,c\rangle=\mathbb Z_q\times \mathbb Z_q$.
Here $H_i$, $i=3$, $\ldots$, $8$ intersect with each other nontrivially;
$H_2$ is a subgroup of $H_1$, $H_3$, $H_4$;
$H_1\cap H_4=H_2$; $H_1$ is a subgroup of $H_3$.
 It follows that $\mathscr{I}(G)$ has a subgraph $\mathcal A_1$ as shown in Figure~\ref{fig:f2}, so $\gamma(\mathscr{I}(G))>1$ and $\overline{\gamma}(\Gamma_(G))>1$.

\noindent \textbf{Subcase 1c:} If $G \cong (\mathbb Z_p \times \mathbb Z_p) \rtimes \mathbb Z_{q^2}:=\langle a,b,c~|~a^p=b^p=c^{q^2}=1, ab=ba, cac^{-1}=a^ib^j, cbc^{-1}=a^kb^l\rangle$,
 where $\bigl(\begin{smallmatrix}
  i & j\\ k & l
\end{smallmatrix} \bigr)$ has order $q^2$ in $GL_2(p)$, $q^2~|~(p+1)$.
Then we have two possibilities:
\begin{itemize}
\item Suppose $G$ has a subgroups of order $pq^2$ and $pq$, then $H_1:=\langle c \rangle=\mathbb Z_{q^2}$, $H_2:=\langle c^p \rangle$,
$H_3:=\langle a, c \rangle$, $H_4:=\langle b, c^p \rangle$, $H_5:=\langle b, c\rangle$, $H_6:=\langle a,c^p\rangle$,
$H_7:=\langle a,b\rangle=\mathbb Z_p\times \mathbb Z_p$,
$H_8:=\langle a,b,c^p\rangle$ are proper subgroups of $G$. Here $H_i$, $i=3$, $\ldots$, $8$ intersect with each other nontrivially;
$H_2$ is a subgroup of $H_1$, $H_3$, $H_4$;
$H_1$ is a subgroup of $H_3$; $H_1\cap H_4=H_2$.
 It follows that $\mathscr{I}(G)$ has a subgraph $\mathcal A_1$ as shown in Figure~\ref{fig:f2}, so $\gamma(\mathscr{I}(G))>1$, $\overline{\gamma}(\mathscr{I}(G))>1$.
\item Suppose $G$ has a no subgroups of order $pq^2$ and $pq$, then by Theorem~\ref{1000}, $\mathscr{I}(G)$ is planar and by \eqref{33}, $\mathscr{I}(G)$ contains $C_3$.
\end{itemize}

\noindent \textbf{Subcase 1d:} If $G \cong (\mathbb Z_p \times \mathbb Z_p) \rtimes (\mathbb Z_q \times \mathbb Z_q)$, then $H_1:=\langle a,b \rangle=\mathbb Z_p\times \mathbb Z_p$,
$H_2:=\langle a,c\rangle$,
$H_3:=\langle a,d\rangle$, $H_4:=\langle b,c \rangle$, $H_5:=\langle a,b,c \rangle$, $H_6:=\langle a,b,d\rangle$, $H_7:=\langle a,c,d\rangle$,
$H_8:=\langle b,c,d\rangle$ are proper subgroups of $G$, where $\langle a,b\rangle:=\mathbb Z_p\times \mathbb Z_p$ and
$\langle c,d\rangle=\mathbb Z_q\times \mathbb Z_q$.
Here $H_i$, $i=1$, $\ldots$, $8$ intersect with each other nontrivially.
 It follows that $\mathscr{I}(G)$ contains $K_8$ as a subgraph and hence $\gamma(\mathscr{I}(G))>1$ and $\overline{\gamma}(\mathscr{I}(G))>1$.

\noindent \textbf{Case 2:} $(p, q)= (3, 2)$. Up to isomorphism, there are nine groups of order 36. In the following we consider each of these groups.
\begin{enumerate}[{\normalfont (i)}]
\item $G\cong D_{18}$. Here $H_1:=\langle a\rangle$, $H_2:=\langle a^2\rangle$,
$H_3:=\langle a^6\rangle$, $H_4:=\langle a^6,b\rangle$, $H_5:=\langle a^6, ba\rangle$, $H_6:=\langle a^6, ba^2\rangle$, $H_7:=\langle a^6,ba^3\rangle$,
$H_8:=\langle a^6, ba^4\rangle$ are proper subgroups of $G$. Also $H_3$ is a subgroup of $H_i$, $i=1$, $\ldots$, $8$.
 It follows that $\mathscr{I}(G)$ contains $K_8$ as a subgraph, so $\gamma(\mathscr{I}(G))>1$ and $\overline{\gamma}(\mathscr{I}(G))>1$.

\item $G\cong S_3\times S_3$. Here $H_1:=S_3\times \{e\}$, $H_2:=S_3\times \langle (123)\rangle$,
$H_3:=S_3\times \langle (12)\rangle$, $H_4:=S_3\times \langle (13)\rangle$, $H_5:=S_3\times \langle (23)\rangle$,
$H_6:=\langle (13)\rangle\times S_3$, $H_7:=\langle (123)\rangle\times S_3$,
$H_8:=\langle (12)\rangle\times S_3$ are proper subgroups of $G$.
Also $H_i$, $i=1$, $\ldots$, $8$ intersect with each other nontrivially, it follows that $\mathscr{I}(G)$ contains $K_8$ as a subgraph and so
$\gamma(\mathscr{I}(G))>1$ and $\overline{\gamma}(\mathscr{I}(G))>1$.

\item $G\cong \mathbb Z_3\times A_4$. Here $H_1:=\mathbb Z_3\times \{e\}$, $H_2:=\mathbb Z_3\times \langle (123)\rangle$,
$H_3:=\mathbb Z_3\times \langle (124)\rangle$, $H_4:=\mathbb Z_3\times \langle (134)\rangle$, $H_5:=\mathbb Z_3\times \langle (234)\rangle$,
$H_6:=\mathbb Z_3\times \langle (12)(34)\rangle$, $H_7:=\mathbb Z_3\times \langle (13)(24)\rangle$,
$H_8:=\mathbb Z_3\times \langle (14)(23)\rangle$ are proper subgroups of $G$.
Also $H_1$ is a subgroup of $H_i$, $i=2$, $\ldots$, $8$, it follows that $\mathscr{I}(G)$ contains $K_8$ as a subgraph and so $\gamma(\mathscr{I}(G))>1$
and $\overline{\gamma}(\mathscr{I}(G))>1$.

\item $G\cong \mathbb Z_6\times S_3$. Here $H_1:=\mathbb Z_6\times \langle (123)\rangle$, $H_2:=\mathbb Z_6\times \langle (12)\rangle$,
$H_3:=\mathbb Z_6\times \langle (13)\rangle$, $H_4:=\mathbb Z_6\times \langle (23)\rangle$, $H_5:=\mathbb Z_3\times \langle (123)\rangle$,
$H_6:=\mathbb Z_3\times \langle (12)\rangle$, $H_7:=\mathbb Z_3\times \langle (13)\rangle$,
$H_8:=\mathbb Z_3\times \langle (23)\rangle$ are proper subgroups of $G$.
Also $H_i$, $i=1$, $\ldots$, $8$ intersect with each other nontrivially, it follows that $\mathscr{I}(G)$ contains $K_8$ as a subgraph and hence
$\gamma(\mathscr{I}(G))>1$ and $\overline{\gamma}(\mathscr{I}(G))>1$.

\item $G\cong \mathbb Z_9\rtimes\mathbb Z_4=\langle a,b~|~a^9=b^4=1, bab^{-1}=a^i, i^4\equiv 1(\mbox{mod}~ 9)\rangle$. Here $H_1:=\langle a\rangle$,
$H_2:=\langle a, b^2\rangle$,
$H_3:=\langle a^3\rangle$, $H_4:=\langle a^3,b\rangle$, $H_5:=\langle a^3, b^2\rangle$, $H_6:=\langle a^3, ab^2\rangle$, $H_7:=\langle a^3, a^2b^2\rangle$,
$H_8:=\langle b\rangle$ are proper subgroups of $G$. Also $H_3$ is a subgroup of $H_i$, $i=1$, $\ldots$, 7; $H_8$ intersect with $H_2$, $H_4$, $H_5$ non-trivially. It follows that $\mathscr{I}(G)$ has a subgraph $\mathcal A_1$ as shown in Figure~\ref{fig:f2}, so $\gamma(\mathscr{I}(G))>1$, $\overline{\gamma}(\mathscr{I}(G))>1$.

\item $G\cong \mathbb Z_3\times(\mathbb Z_3\rtimes \mathbb Z_4)=\langle a,b,c~|~a^3=b^3=c^4=1, ab=ba, ac=ca, cbc^{-1}=b^i,
\mbox{ord}_2(i)=3\rangle$. Here $H:=\mathbb Z_3\rtimes \mathbb Z_4$
 is a subgroup of $G$, and as in Case 1b, in the proof of Theorem~\ref{intersecting graph t4}, $H$ has a unique subgroup of order $2$, let it be $H_1$;
$H_i$, $i=2$, 3, 4 be subgroups of $H$ of order $4$; $H_5$ be a subgroup of $H$ of order $6$; it follows that $H$ together with its proper subgroups forms $K_6$
as a subgraph of $\mathscr{I}(H)$.
Moreover $H_6:=\mathbb Z_3\times H_1$ and
$H_7:=\mathbb Z_3\times H_2$ are subgroups of $G$. Thus $H_1$ is a subgroup of $H$, $H_i$, $i=1$, 2, 3, 4, 6, 7. It follows that $G$
contains $K_8$ as a subgraph, and so $\gamma(\mathscr{I}(G))>1$ and $\overline{\gamma}(\mathscr{I}(G))>1$.

\item $G\cong (\mathbb Z_3\times \mathbb Z_3)\rtimes \mathbb Z_4:=\langle a,b,c~|~a^3=b^3=c^4=1, ab=ba, cac^{-1}=a^ib^j, cbc^{-1}=a^kb^l\rangle$,
 where $\bigl(\begin{smallmatrix}
  i & j\\ k & l
\end{smallmatrix} \bigr)$ has order $4$ in $GL_2(3)$. We already discussed the intersection graph of subgroups of this group in Subcase 1c.

\item $G\cong \mathbb Z_2\times (\mathbb Z_3\times \mathbb Z_3)\rtimes \mathbb Z_2$. Here $H_1:=\langle a,b, c\rangle$,
$H_2:=\langle a, b,d\rangle$,
$H_3:=\langle a,c,d\rangle$, $H_4:=\langle b,c,d\rangle$, $H_5:=\langle a, b\rangle$, $H_6:=\langle a, c\rangle$, $H_7:=\langle a,d\rangle$,
$H_8:=\langle b,c\rangle$ are proper subgroups of $G$. Also these subgroups intersect with each other non-trivially.
 It follows that $\mathscr{I}(G)$ contains $K_8$ as a subgraph, so $\gamma(\mathscr{I}(G))>1$ and $\overline{\gamma}(\mathscr{I}(G))>1$.

\item $G\cong (\mathbb Z_2\times \mathbb Z_2)\rtimes \mathbb Z_9$. Here $H_1:=\langle a,b \rangle$, $H_2:=\langle a,c\rangle$,
$H_3:=\langle a, c^3 \rangle$, $H_4:=\langle b,c\rangle$, $H_5:=\langle b,c^3\rangle$, $H_6:=\langle a,b,c^3\rangle$, $H_7:=\langle c\rangle$,
$H_8:=\langle c^3\rangle$ are proper subgroups of $G$. Also $H_i$, $i=1$, $\ldots$, $6$ intersect with each other nontrivially;
$H_7$ is a subgroup of $H_8$, $H_3$, $H_4$;
$H_1$ intersect with $H_3$, $H_4$ non-trivially.
 It follows that $\mathscr{I}(G)$ has a subgraph $\mathcal A_1$ as shown in Figure~\ref{fig:f2},
so $\gamma(\mathscr{I}(G))>1$ and $\overline{\gamma}(\mathscr{I}(G))>1$.
\end{enumerate}

The proof follows by combining all the above cases.
\end{proof}
\begin{pro}\label{intersecting graph t7}
 If $G$ is a non-abelian group of order $p^{\alpha}q^{\beta}$, where $p$, $q$ are distinct primes and $\alpha$, $\beta \geq 2$,
then $\gamma(\mathscr{I}(G))>1$, $\overline{\gamma}(\mathscr{I}(G))>1$ and $\mathscr{I}(G)$ contains $K_5$.
\end{pro}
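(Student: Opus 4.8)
The plan is to deduce all three assertions at once from a single reduction principle: for a proper subgroup $H<G$ the graph $\mathscr{I}(H)$ is an induced subgraph of $\mathscr{I}(G)$ (its vertices are non-trivial proper subgroups of $G$ and adjacency is unchanged), so it suffices to produce a proper subgroup $H$ with $K_5\subseteq\mathscr{I}(H)$, $\gamma(\mathscr{I}(H))>1$ and $\overline{\gamma}(\mathscr{I}(H))>1$. In every case below these three facts will be certified simultaneously by a copy of $K_8$, or by a copy of $\mathcal{B}_1$ (a toroidal obstruction by Theorem~\ref{genus 500} which, as recorded above, is also a projective-plane obstruction and contains $K_5$), or by a copy of $\mathcal{A}_1$ (Theorem~\ref{genus 501}, containing $K_{3,5}$) together with an explicitly displayed $K_6$. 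Since $\max\{\alpha,\beta\}\ge 3$ we may relabel $p$ and $q$ so that $\alpha\ge 3$ and $\beta\ge 2$ (the remaining case $\alpha=\beta=2$ being Proposition~\ref{intersecting graph t6}), and we argue by induction on $\alpha+\beta$.

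First suppose a Sylow $p$-subgroup $P$ is normal in $G$. For $x\in G$ of order $q$ the subgroup $M:=P\langle x\rangle$ has order $p^{\alpha}q<|G|$, and if $M$ is non-abelian we conclude by Proposition~\ref{intersecting graph t5}. If $P$ is non-abelian and not isomorphic to one of $Q_8$, $M_8$, $M_{16}$ or $M_{p^3}$ with $p\in\{3,5\}$, then $K_5\subseteq\mathscr{I}(P)$ and $\gamma(\mathscr{I}(P)),\overline{\gamma}(\mathscr{I}(P))>1$ by Proposition~\ref{intersecting graph t2}, and again we are done. What remains is the case in which every such $M$ is abelian --- so that every element of $G$ of order $q$ centralises $P$ --- and $P$ is abelian or one of the listed small $p$-groups (which forces $\alpha\in\{3,4\}$ unless $P$ is abelian). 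In each of these finitely many configurations one exhibits eight pairwise-intersecting proper subgroups of $G$ directly: typically the at most two non-trivial subgroups of a Sylow $q$-subgroup, their products with one or two fixed subgroups of $P$, and the subgroups of $P$ lying above a fixed minimal subgroup of $P$ that is central in $G$ --- equivalently, in most of these cases, the pre-images in $G$ of the subgroups of $G/Z$ for a suitable central $Z$ of prime order --- yielding $K_8\subseteq\mathscr{I}(G)$. (For instance, when $G=P\times Q$ with $P\in\{Q_8,M_8\}$, the subgroups of $G$ containing the central involution of $P$ are easily seen to number at least eight.)

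Now suppose no Sylow $p$-subgroup is normal. As $G$ is solvable (Burnside) and $G/G'$ is a non-trivial abelian group, $G$ has a normal subgroup $N$ of prime index, of order $p^{\alpha-1}q^{\beta}$ or $p^{\alpha}q^{\beta-1}$; thus the $p$-exponent of $|N|$ is at least $2$, its $q$-exponent at least $1$, and $|N|<|G|$. If $N$ is non-abelian then $|N|$ falls under Proposition~\ref{intersecting graph t5} (when its $q$-exponent is $1$, forcing the $p$-exponent to be at least $3$), under Proposition~\ref{intersecting graph t6} (when $|N|=p^2q^2$), or under the induction hypothesis, and the conclusion for $\mathscr{I}(N)$ transfers to $\mathscr{I}(G)$ --- the only exception being the sole $K_5$-free non-abelian group of order $p^2q^2$ from Proposition~\ref{intersecting graph t6}. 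If $N$ is abelian, then Case~9 of the proof of Proposition~\ref{intersecting graph t1} settles every non-cyclic abelian $N$, while Proposition~\ref{intersecting graph t100} settles the cyclic $N$ except $N\cong\mathbb Z_{p^3q}$, $\mathbb Z_{p^4q}$ or $\mathbb Z_{p^2q^2}$, whose intersection graphs are merely toroidal. Hence the induction can fail to close only when \emph{every} normal subgroup of $G$ of prime index is isomorphic to $\mathbb Z_{p^3q}$, $\mathbb Z_{p^4q}$, $\mathbb Z_{p^2q^2}$, or the exceptional group of order $p^2q^2$, which confines $|G|$ to $p^3q^2$, $p^4q^2$ or $p^2q^3$; for these finitely many orders I would finish by hand, using a generator of $G$ modulo $N$ and its conjugation action on the (completely known) subgroup lattice of $N$ to write down eight suitable subgroups realising $K_8$, $\mathcal{A}_1$ or $\mathcal{B}_1$ in $\mathscr{I}(G)$.

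The hard part is precisely this residue of small orders, where every obvious proper subgroup has an intersection graph that is planar, toroidal, projective-planar or $K_5$-free, so the reduction machinery alone is insufficient and one must genuinely use more than one maximal subgroup of $G$ at a time. The two delicate points are: (i) when the relevant proper subgroup (or $N$) is cyclic, one must check that the outer generator of $G$ really creates enough \emph{new} subgroups of $G$ --- of orders such as $pq$, $p^2q$ or $pq^2$ --- rather than leaving the subgroup lattice essentially unchanged; and (ii) the $2$-groups $Q_8$, $M_8$, $D_8$ and $Q_{16}$ have unusually thin subgroup lattices, so reaching eight pairwise-intersecting subgroups of $G$ forces one to combine them carefully with the $q$-part. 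Once such an eight-subgroup configuration is in place, Theorems~\ref{genus 110}, \ref{genus 501} and~\ref{genus 500} deliver $\gamma(\mathscr{I}(G))>1$, $\overline{\gamma}(\mathscr{I}(G))>1$ and $K_5\subseteq\mathscr{I}(G)$, exactly as elsewhere in the paper.
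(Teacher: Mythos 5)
Your overall strategy coincides with the paper's: induct on $\alpha+\beta$, pass to a normal subgroup $N$ of prime index (or to $P\langle x\rangle$ when a Sylow subgroup is normal), and invoke Propositions~\ref{intersecting graph t100}, \ref{intersecting graph t1}, \ref{intersecting graph t5} and \ref{intersecting graph t6} to dispose of every case in which the chosen proper subgroup already carries a $K_8$, an $\mathcal A_1$ or a $\mathcal B_1$. The difficulty is that you have not actually proved the statement in the cases you yourself single out as ``the hard part''; the argument funnels everything into two places where only a gesture at a construction is given, and those are precisely the places where the content of the proposition lives.

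First, in the normal-Sylow-$p$ branch you assert that when $P$ is abelian (or one of $Q_8$, $M_8$, $M_{16}$, $M_{p^3}$) and every element of order $q$ centralises $P$, ``one exhibits eight pairwise-intersecting proper subgroups of $G$ directly'', supported by the word ``typically'' and a single example $G=P\times Q$. This is not a finite verification: $P$ ranges over all abelian $p$-groups of order $p^\alpha$, $\alpha\ge 3$, and $Q$ over all $q$-groups of order $q^\beta$, $\beta\ge 2$; moreover the hypothesis that elements of order $q$ centralise $P$ does not determine the action of the full Sylow $q$-subgroup (elements of order $q^2$ may act non-trivially), so $G$ need not be a direct product and the eight subgroups cannot be written down uniformly without further argument. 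Second, in the non-normal branch the residue you defer with ``I would finish by hand'' comprises infinitely many groups (the shapes $p^3q^2$, $p^4q^2$, $p^2q^3$ range over all prime pairs), and it is exactly here that the paper does its real work: when $N$ is the $K_5$-free group of order $p^2q^2$ of Proposition~\ref{intersecting graph t6} one needs the $K_{4,5}$ configuration built from the unique subgroups of $N$ of orders $p^2$ and $p^2q$, the $p+1$ subgroups of order $p$ and a Sylow $p$-subgroup of $G$, together with a separate analysis for $p=3$ according to whether that Sylow subgroup is $\mathbb Z_{p^2}\times\mathbb Z_p$, $M_{p^3}$ or $(\mathbb Z_p\times\mathbb Z_p)\rtimes\mathbb Z_p$; and when $N\cong\mathbb Z_{p^3q}$ one must produce a subgroup of order $q^2$ above the $q$-part of $N$ to complete $\mathcal A_1$. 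Without these explicit subgroup configurations the induction does not close, so as written the proposal is an outline of the paper's proof with its essential steps omitted rather than a proof.
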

\begin{proof}
We prove the result by induction on
$\alpha + \beta$. If $\alpha+\beta=5$, then $|G|=p^3q^2$. Since $G$ is solvable,
it has a normal subgroup $N$ of prime index.

\noindent\textbf{Case 1:} If $[G:N]=p$, then $|N|=p^2q^2$. Suppose $\gamma(\mathscr{I}(N))>1$,
$\overline{\gamma}(\mathscr{I}(N))>1$ and
$\mathscr{I}(N)$ contains $K_5$,
then the same holds for $\mathscr{I}(G)$ also. So it is enough to consider the cases when $\mathscr{I}(N)$ is one of planar, toroidal, projective-planar,
$K_5$-free, $C_3$-free and bipartite.
By Propositions~\ref{intersecting graph t100}, \ref{intersecting graph t1},  \ref{intersecting graph t6} and Theorem~\ref{1000}, $N\cong \mathbb Z_{p^2q^2}$ or $\langle a,b,c~|~a^p=b^p=c^{q^2}=1, ab=ba, cac^{-1}=b^{-1}, cbc^{-1}=a^1b^l\rangle$,
 where $\bigl(\begin{smallmatrix}
  0 & -1\\ 1 & l
\end{smallmatrix} \bigr)$ has order $q^2$ in $GL_2(p)$, $q^2~|~(p+1)$.
 If $N\cong \mathbb Z_{p^2q^2}$, then $N$ together with its proper subgroups forms a  subgraph in $\mathscr{I}(G)$, which is isomorphic to $\mathcal A_1$ as shown in Figure~\ref{fig:f2} and so $\gamma(\mathscr{I}(G))>1$, $\overline{\gamma}(\mathscr{I}(G))>1$.

If $N$ is isomorphic to the second group, then $N$ has unique subgroups of order $p^2$ and $p^2q$, let them be $H_1$, $H_2$; also there are $p+1$ subgroups of order $p$, let them be $B_i$, $i=1$, 2, $\ldots$, $p+1$. Here $p>q$. Suppose $p\geq 5$, then $\mathscr{I}(G)$ contains $K_{4,5}$ as a subgraph with bipartition $X:=\{P$, $H_1$, $H_2$, $N\}$ and $Y:=\{B_1$, $B_2$, $B_3$, $B_4$, $B_5\}$, where $P$ is a Sylow $p$-subgroup of $G$ containing $H_1$ and $H_1\cong \mathbb Z_p\times \mathbb Z_p$. Here $B_1$ is a subgroup of $N$, $H_1$, $H_2$, $P$ and so they form $K_5$ as a subgraph of $\mathscr{I}(G)$. If $p=3$, then $P\cong \mathbb Z_{p^2}\times \mathbb Z_p$, $M_{p^3}$ or $(\mathbb Z_p\times \mathbb Z_p)\rtimes \mathbb Z_p$. If $P\cong \mathbb Z_{p^2}\times \mathbb Z_p$ or $M_{p^3}$, then by \eqref{e5}, \eqref{e100}, $H_1$, $H_2$ together with the proper subgroups of $H_1$ forms a subgraph of $\gamma(\mathscr{I}(G))$, which is  isomorphic to $\mathcal A_1$ shown in Figure~\ref{fig:f2}, so $\gamma(\mathscr{I}(G))>1$ and $\overline{\gamma}(\mathscr{I}(G))>1$. If $P\cong (\mathbb Z_p\times \mathbb Z_p)\rtimes \mathbb Z_p$, then by Proposition~\ref{intersecting graph t2}, $\gamma(\mathscr{I}(G))>1$ and $\overline{\gamma}(\mathscr{I}(G))>1$.

\noindent\textbf{Case 2:} If $[G:N]=q$, then $|N|=p^3q$. Suppose $\gamma(\mathscr{I}(N))>1$,
$\overline{\gamma}(\mathscr{I}(N))>1$ and
$\mathscr{I}(N)$ contains $K_5$,
then the same holds for $\mathscr{I}(G)$ also. So it is enough to consider the cases when $\mathscr{I}(P)$ is one of planar, toroidal, projective-planar,
$K_5$-free, $C_3$-free and bipartite. By Propositions~\ref{intersecting graph t100}, \ref{intersecting graph t1},
\ref{intersecting graph t5} and Theorem~\ref{1000}, $N\cong \mathbb Z_{p^3q}$. Let $H_i$, $i=1$, 2, 3, 4, 5, 6 be subgroups of $N$ of order $p$, $p^2$, $p^3$, $pq$, $p^2q$, $q$ respectively.  Let $H$ be a subgroup of $G$ of order $q^2$ such that $H$ contains $H_6$. By \eqref{201}, $N$ together with its proper subgroups and $H$ forms a subgraph in $\mathscr{I}(G)$, which is isomorphic to $\mathcal A_1$ as shown in Figure~\ref{fig:f2}, so $\gamma(\mathscr{I}(G))>1$ and $\overline{\gamma}(\mathscr{I}(G))>1$.

Now we assume that the
result is true for all non-abelian groups of order $p^mq^n$, where $m+n < \alpha + \beta$ ($m+n\geq 5$, $m,n\geq 2$). We prove the result when $\alpha + \beta >5$.
Since $G$ is solvable, $G$ has a subgroup $H$ of prime index, with out loss of generality, say $q$, and so $|H|=p^{\alpha}q^{\beta -1}$. If $H$ is
cyclic, then $H$ together with its proper subgroups forms $K_6$ as a subgraph of $\mathscr{I}(G)$. Now let $K$ be the subgroup of $H$ of order $q^{\beta-1}$
and let $Q$ be a $q$-Sylow subgroup of $G$ containing $K$. Also let $H_i$, $i=1$, 2, 3 be subgroups of $H$ of order $pq$, $p^2q$, $p^3q$ respectively.
It follows that $\mathscr{I}(G)$ has a subgraph $\mathcal A_1$ as shown in Figure~\ref{fig:f2},  so $\gamma(\mathscr{I}(G))>1$ and $\overline{\gamma}(\mathscr{I}(G))>1$.

If $H$ is non-cyclic
abelian, then by Proposition~\ref{intersecting graph t1}, $\gamma(\mathscr{I}(H))>1$, $\overline{\gamma}(\mathscr{I}(H))>1$
and $\mathscr{I}(H)$ contains $K_5$ as a subgraph, so $\gamma(\mathscr{I}(G))>1$, $\overline{\gamma}(\mathscr{I}(G))>1$
and $\mathscr{I}(G)$ contains $K_5$.
If $H$ is non-abelian, then we have the following cases to consider:

\noindent\textbf{Case a:} If $\beta = 2$, then $\alpha > 3$. So by Proposition~\ref{intersecting graph t2}, $\gamma(\mathscr{I}(H))>1$, $\overline{\gamma}(\mathscr{I}(H))>1$
 and $\mathscr{I}(H)$ contains $K_5$ as a subgraph.

\noindent\textbf{Case b:} If $\beta > 2$, then by induction hypothesis, $\gamma(\mathscr{I}(H))>1$, $\overline{\gamma}(\mathscr{I}(H))>1$
 and $\mathscr{I}(H)$ contains $K_5$ as a subgraph.

\noindent\textbf{Case c:} If $\alpha = 2$, then $\beta > 2$. By Case b, $\gamma(\mathscr{I}(H))>1$, $\overline{\gamma}(\mathscr{I}(H))>1$
 and $\mathscr{I}(H)$ contains $K_5$ as a subgraph.

\noindent\textbf{Case d:} If $\alpha >2$, then by induction hypothesis, $\gamma(\mathscr{I}(H))>1$, $\overline{\gamma}(\mathscr{I}(H))>1$
 and $\mathscr{I}(H)$ contains $K_5$ as a subgraph.

It follows that $\gamma(\mathscr{I}(G))>1$, $\overline{\gamma}(\mathscr{I}(G))>1$ and $\mathscr{I}(G)$ contains $K_5$ as a subgraph.
\end{proof}

\begin{pro}\label{intersecting graph t8}
 Let $G$ be a non-abelian solvable group of order $pqr$, where $p$, $q$, $r$ are distinct primes, $p>q>r$. Then
 \begin{enumerate}[\normalfont (1)]
 \item $\mathscr{I}(G)$ contains $C_3$;
\item $\mathscr{I}(G)$ is $K_5$-free if and only if $G \cong \langle a,b,c~|~a^p=b^q=c^r=1, b^{-1}ab=a^\mu, c^{-1}ac=a^v, bc=cb\rangle$, where $r$, $q$ are divisor of $(p-1)$ and $v$, $\mu\neq 1$;
\item $\mathscr{I}(G)$ is non-toroidal and non-projective-planar;
\end{enumerate}
\end{pro}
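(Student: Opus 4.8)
The plan is to begin from the remark that, since $p$ is the largest prime divisor of $|G|$, a Sylow counting argument (together with solvability) forces the Sylow $p$-subgroup $P\cong\mathbb Z_p$ to be the unique, hence normal, Sylow $p$-subgroup; in particular every subgroup of $G$ of order divisible by $p$ contains $P$. Part~(1) is then immediate: if $S$ is a Sylow $q$-subgroup and $T$ a Sylow $r$-subgroup, then $PS$ and $PT$ are proper subgroups of orders $pq$ and $pr$, and $P$, $PS$, $PT$ pairwise contain $P$, hence induce a $C_3$ in $\mathscr I(G)$.

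For (2) and (3) I would work through the classification of the non-abelian groups of order $pqr$. By Schur--Zassenhaus we may write $G=P\rtimes H$ with $|H|=qr$; here $H$ is $\mathbb Z_{qr}$ or (only when $r\mid q-1$) the non-abelian group $\mathbb Z_q\rtimes\mathbb Z_r$, and the conjugation action of $H$ on $P$ factors through $H^{\mathrm{ab}}$. This produces, up to isomorphism, the group $\mathcal G_3$ of Theorem~\ref{1000}(5) (which exists exactly when $q\mid p-1$ and $r\mid p-1$), together with $(\mathbb Z_p\rtimes\mathbb Z_q)\times\mathbb Z_r$, $(\mathbb Z_p\rtimes\mathbb Z_r)\times\mathbb Z_q$, $\mathbb Z_p\times(\mathbb Z_q\rtimes\mathbb Z_r)$, and $\langle a,b,c~|~a^p=b^q=c^r=1,\ ab=ba,\ cac^{-1}=a^v,\ cbc^{-1}=b^k,\ v\neq1,\ k\neq1\rangle$. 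For $G\cong\mathcal G_3$, Theorem~\ref{1000} gives that $\mathscr I(\mathcal G_3)$ is planar (Figure~\ref{int f1}(d)), so it is $K_5$-free and has $\gamma=\overline\gamma=0$; in particular it is neither toroidal nor projective-planar, which disposes of $\mathcal G_3$ in both (2) and (3). For each of the remaining four families I would read the subgroup lattice off $H$ and the action---the subgroups of orders $pq$, $pr$, $qr$ and the Sylow $q$- and $r$-subgroups are all easily listed, and every subgroup of order divisible by $p$ contains $P$---and exhibit a $K_5$ in $\mathscr I(G)$; together with the $\mathcal G_3$ case this proves (2).

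For these four families in part (3), the plan is to display a subgraph of $\mathscr I(G)$ that is a known obstruction both for the torus and for the projective plane. In $(\mathbb Z_p\rtimes\mathbb Z_q)\times\mathbb Z_r$ and $(\mathbb Z_p\rtimes\mathbb Z_r)\times\mathbb Z_q$, the subgroup of order $pq$, the subgroup of order $pr$, the $p$ subgroups of order $qr$ and the unique central Sylow subgroup span a clique---or a clique with one edge removed---on at least eight vertices, which has $\gamma>1$ and $\overline\gamma>1$ by Theorem~\ref{genus 110} and Euler's formula (the only tight case being $p=5$, where one obtains $K_8$ minus an edge, still containing $K_7$). For $\mathbb Z_p\times(\mathbb Z_q\rtimes\mathbb Z_r)$ the analogous (near-)clique is spanned by $P$, the unique order-$pq$ subgroup, the $q$ order-$pr$ subgroups and the unique order-$qr$ subgroup. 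For the last family one instead produces two copies of $K_5$ meeting in a single vertex---$\langle a,b\rangle\cong\mathbb Z_{pq}$ with three of the order-$pr$ subgroups, and that same $\langle a,b\rangle$ with three of the order-$qr$ subgroups---which is the torus obstruction $\mathcal B_1$ of Figure~\ref{fig:int f3}, and which is also a projective-plane obstruction (cf.\ \cite{glov}).

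I expect the main obstacle to be the families of smallest admissible order, especially when $(q,r)=(3,2)$: there the cliques built around $P$ and around the normal Sylow subgroups are small and $\mathscr I(G)$ is comparatively sparse, so a forbidden subgraph must be located by a careful, case-by-case search among the eight-vertex torus obstructions $\mathcal A_1,\mathcal A_2,\mathcal A_3$ of \cite{pra}, the obstructions $\mathcal B_1,\dots,\mathcal B_4$ of \cite{and}, and the Glover--Huneke--Wang projective-plane obstructions of \cite{glov}. Making the classification of the order-$pqr$ groups genuinely exhaustive, and matching each family correctly to its subgroup lattice, is the other point that requires care.
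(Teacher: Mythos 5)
Your overall strategy (classify the non-abelian groups of order $pqr$ via the normal Sylow $p$-subgroup and the Hall $\{q,r\}$-complement, then exhibit $K_5$'s and torus/projective-plane obstructions family by family) is essentially the route the paper takes, and your argument for part (1) via the triangle $P$, $PS$, $PT$ is cleaner than the paper's case-by-case verification. Your near-clique computations for $(\mathbb Z_p\rtimes\mathbb Z_q)\times\mathbb Z_r$ and $(\mathbb Z_p\rtimes\mathbb Z_r)\times\mathbb Z_q$ check out (one gets $K_{p+3}$ minus the single edge joining the order-$pq$ or order-$pr$ subgroup to the disjoint central Sylow subgroup, and $K_8-e$ already violates the Euler bound $|E|\le 3|V|$ for the torus while containing $K_7$). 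One small slip: in the last family your two ``$K_5$'s meeting at $\langle a,b\rangle$'' as written have only four vertices each; you need $\{\langle a,b\rangle, P, PR_1,PR_2,PR_3\}$ and $\{\langle a,b\rangle, Q, QR_1,QR_2,QR_3\}$, which do work since both $P$ and $Q$ are normal.

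The genuine gap is exactly the case you defer: $\mathbb Z_p\times(\mathbb Z_q\rtimes\mathbb Z_r)$ with $(q,r)=(3,2)$, i.e.\ $G\cong\mathbb Z_p\times S_3$ with $p\ge 5$. Since $\gcd(p,6)=1$, every subgroup is a direct product, so $G$ has exactly ten proper nontrivial subgroups: $P$, $Q=1\times A_3$, three $T_i=1\times\langle\tau_i\rangle$, $S=1\times S_3$, $PQ$, and three $PT_i$. The six subgroups $P, S, PQ, PT_1,PT_2,PT_3$ induce $K_6$ minus the single edge $PS$, and each of $Q,T_1,T_2,T_3$ has degree two with its two neighbours already adjacent ($Q\sim S,PQ$ and $T_i\sim S,PT_i$). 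A degree-two vertex whose neighbours are joined by an edge can always be added alongside that edge in any embedding, so $\mathscr I(G)$ embeds wherever $K_6-e$ does --- and $K_6-e$ embeds in both the torus and the projective plane (Theorem~\ref{genus 110} gives $\gamma(K_6)=\overline{\gamma}(K_6)=1$). Hence the obstruction hunt you propose for this family cannot succeed: $\mathscr I(\mathbb Z_p\times S_3)$ is toroidal and projective-planar, so part (3) of the statement actually fails here. Note that the paper's own proof dodges this subcase only by imposing the hypothesis $r\nmid(q-1)$ in its Case b, which is incompatible with the non-abelian structure of the order-$qr$ factor appearing in that very case; so the difficulty you flagged is not a defect of your method but a real hole in the claimed result, and no amount of searching among the obstructions of \cite{pra}, \cite{and} and \cite{glov} will close it.
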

\begin{proof}
\noindent By~\cite[p. 215]{cole},
up to isomorphism there are four groups of order $pqr$. In the following we deal with each of these groups.

\noindent\textbf{Case a:} If $r\nmid (p-1)$ and $q\nmid (p-1)$, then $G\cong \mathbb Z_{pqr}$, which is not possible.

\noindent\textbf{Case b:} If $r |(p-1)$, $r \nmid (q-1)$ and $q\nmid (p-1)$, then $G\cong\langle a,b,c~|~a^p=b^q=c^r=1, ab=ba, ac=ca, c^{-1}bc=b^v\rangle$, $v\neq 1$.
Here $H_1:=\langle a\rangle$, $H_2:=\langle b\rangle$, $H_3:=\langle a,b\rangle$, $H_4:=\langle b,c\rangle$,
$I_1:=\langle c\rangle$, $I_2:=\langle bc\rangle$, $I_3:=\langle b^2c\rangle$,$\ldots$, $I_q:=\langle b^{q-1}c\rangle$, $B_1:=\langle a,c\rangle$,
 $B_2:=\langle a,bc\rangle$, $\ldots$, $B_q:=\langle a,b^{q-1}c\rangle$ are the only subgroups of $G$.
If $q=3$, then $r=2$, and so $r$ divides $(q-1)$,  which is not possible.

If $q\geq 5$, then $B_i$, $i=1 \ldots , 5$, $H_1$ intersect with each other non-trivially; $H_2$ is a subgroup of $H_3$, $H_4$;
$H_3\cap B_i=H_1$. It follows that $\mathscr{I}(G)$ has a subgraph $\mathcal A_1$ as shown in Figure~\ref{fig:f2}, so
$\gamma(\mathscr{I}(G))>1$ and $\overline{\gamma}(\mathscr{I}(G))>1$.

\noindent\textbf{Case c:} If $r |(p-1)$, $r \nmid (q-1)$ and $q\nmid (p-1)$, then the group is different from the group given in Case b only in the exchange the roles of $a$, $b$, so $G\cong\langle a,b,c~|~a^p=b^q=c^r=1, ab=ba, bc=cb, c^{-1}ac=a^\mu\rangle$, $\mu\neq 1$.
Here $H_1:=\langle a\rangle$, $H_2:=\langle b\rangle$, $H_3:=\langle a,b\rangle$, $H_4:=\langle a,c\rangle$,
$I_1:=\langle c\rangle$, $I_2:=\langle ac\rangle$, $I_3:=\langle a^2c\rangle$,$\ldots$, $I_q:=\langle a^{q-1}c\rangle$, $B_1:=\langle a,c\rangle$,
$B_2:=\langle b,ac\rangle$, $\ldots$, $B_q:=\langle b,a^{q-1}c\rangle$ are the only subgroups of $G$.
Here also $q=3$ not possible.
If $q\geq 5$, then $B_i$, $i=1$ to 5, $H_1$ intersect with each other non-trivially; $H_2$ is a subgroup of $H_3$, $H_4$;
$H_3\cap B_i=H_1$. It follows that $\mathscr{I}(G)$ has a subgraph $\mathcal A_1$ as in Figure~\ref{fig:f2}, so $\gamma(\mathscr{I}(G))>1$ and $\overline{\gamma}(\mathscr{I}(G))>1$.

\noindent\textbf{Case d:} If $r|(p-1)$, $r | (q-1)$ and  $q\nmid (p-1)$, then
$G\cong \langle a,b,c~|~a^p=b^q=c^r=1, ab=ba, c^{-1}ac=a^\mu, c^{-1}bc=b^v\rangle$, $v$, $\mu\neq 1$.
Here $H_1:=\langle a\rangle$, $H_2:=\langle a,b\rangle$,
$H_3:=\langle a,c\rangle$, $H_4:=\langle a,bc\rangle$, $H_5:=\langle a, b^2c\rangle$, $H_6:=\langle b,c\rangle$, $H_7:=\langle b,ac\rangle$,
$H_8:=\langle b,a^2c\rangle$, $H_9:=\langle b,a^4c\rangle$, $H_{10}:=\langle b,a^{p-1}c\rangle$ are proper subgroups of $G$.
Also $H_1$ is a subgroup of $H_i$, $i=2$, 3, 4, 5
and so they form $K_5$ as a subgraph of $\mathscr{I}(G)$. $\langle b\rangle$ is a subgroup of $H_i$, $i=6$, 7, 8, 9, 10, so they form $K_5$ as a subgraph
of $\mathscr{I}(G)$. It follows that $\mathscr{I}(G)$ has a subgraph $\mathcal B_1$ as in Figure~\ref{fig:int f3}, so $\gamma(\mathscr{I}(G))>1$ and
$\overline{\gamma}(\mathscr{I}(G))>1$.

\noindent\textbf{Case e:} If $q|(p-1)$, then we have the group is of essentially the same form as Case 1b and Case 1c. The elements c, a, b
here playing the same role as a, b, c in Case 1b. The toroidality and the projective-planarity of the intersection graphs of subgroups of this group are as described in Case 1b and Case 1c.

\noindent\textbf{Case f:} If $r |(p-1)$ and $q|(p-1)$, then
$G\cong \langle a,b,c~|~a^p=b^q=c^r=1, b^{-1}ab=a^\mu, c^{-1}ac=a^v, bc=cb\rangle$, $v$, $\mu\neq 1$. By Theorem~\ref{1000}, $\mathscr{I}(G)$ is planar and by Figure~\ref{int f1}(d), it contains $C_3$.

Combining together all the cases, the proof follows.
\end{proof}

\begin{pro}\label{intersecting graph t10}
 Let $G$ be a non-abelian solvable group of order $p^2qr$, where $p$, $q$, $r$ are distinct primes. Then
 \begin{enumerate}[\normalfont (1)]
 \item $\mathscr{I}(G)$ contains $C_3$;
\item $\mathscr{I}(G)$ is $K_5$-free if and only if $G \cong \langle a, b, c~|~a^p=b^p=c^{qr}=1, ab=ba, cac^{-1}=b, cbc^{-1}=ab^l\rangle$, where $l$ is any integer with $\bigl(\begin{smallmatrix}
   0 & -1\\ 1 & l
 \end{smallmatrix} \bigr)$ has order $qr$ in $GL_2(p)$, $qr|(p+1)$.

 \item $\mathscr{I}(G)$ is toroidal if and only if $G \cong \langle a, b, c~|~a^5=b^5=c^6=1, ab=ba, cac^{-1}=b, cbc^{-1}=ab^l\rangle$, where $l$ is any integer with $\bigl(\begin{smallmatrix}
   0 & -1\\ 1 & l
 \end{smallmatrix} \bigr)$ has order $6$ in $GL_2(5)$.
 \item $\mathscr{I}(G)$ is non projective-planar.
\end{enumerate}
\end{pro}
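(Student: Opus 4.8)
The plan is to get (1) from a Hall--subgroup argument, to reduce (2)--(4) to Propositions~\ref{intersecting graph t4} and \ref{intersecting graph t8} by passing to a normal subgroup of prime index, and to finish by a direct study of the one family that survives this reduction. For (1): since $G$ is solvable, choose a Sylow $p$--subgroup $P$ (of order $p^2$), and by Hall's theorem enlarge $P$ to a Hall $\{p,q\}$--subgroup $K_1$ (order $p^2q$) and to a Hall $\{p,r\}$--subgroup $K_2$ (order $p^2r$), so that $P\le K_1\cap K_2$. The subgroups $P,K_1,K_2$ are proper, pairwise distinct (their orders differ) and pairwise intersect in a subgroup containing $P$, hence non--trivially; so $\{P,K_1,K_2\}$ spans a triangle in $\mathscr{I}(G)$, giving $C_3\subseteq\mathscr{I}(G)$.

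For (2)--(4), the main device is that $G$ being solvable has a normal subgroup $H$ of prime index $s\in\{p,q,r\}$, so $|H|\in\{p^2q,\,p^2r,\,pqr\}$, and $\mathscr{I}(H)$ is a subgraph of $\mathscr{I}(G)$; hence $\gamma(\mathscr{I}(G))\ge\gamma(\mathscr{I}(H))$, $\overline{\gamma}(\mathscr{I}(G))\ge\overline{\gamma}(\mathscr{I}(H))$, and any $K_5$, any $\mathcal A_1$ of Theorem~\ref{genus 501}, or any $K_8$ inside $\mathscr{I}(H)$ survives in $\mathscr{I}(G)$. Applying Propositions~\ref{intersecting graph t100}, \ref{intersecting graph t1}, \ref{intersecting graph t4}, \ref{intersecting graph t8} and Theorem~\ref{1000} to $H$, one is left only with the finitely many types of $H$ whose intersection graph is planar, toroidal, projective--planar, $K_5$--free, $C_3$--free or bipartite --- namely $H$ abelian, or $H$ one of $\mathbb Z_q\rtimes_2\mathbb Z_{p^2}$, $A_4$, $\mathbb Z_3\rtimes\mathbb Z_4$, $\mathbb Z_5\rtimes\mathbb Z_4$, $\mathbb Z_9\rtimes\mathbb Z_2$, $\mathbb Z_{25}\rtimes\mathbb Z_2$, the $\mathcal G_1$--type group of order $p^2q$, or the $\mathcal G_3$--type group of order $pqr$. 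For each such (now explicit) $H$ I would write $G$ as an extension of $H$, list a handful of subgroups of $G$ built from the (characteristic, hence $G$--normal) subgroups of $H$, a Sylow $s$--subgroup of $G$, and their products, and exhibit in $\mathscr{I}(G)$ a copy of $\mathcal A_1$ (which contains $K_{3,5}$, so it is simultaneously an obstruction for the torus and the projective plane) or of $K_8$, together with a $K_5$. This settles every such $G$ \emph{except} those in which no line of the relevant elementary abelian normal $p$--subgroup is $G$--invariant, and those are exactly $G\cong(\mathbb Z_p\times\mathbb Z_p)\rtimes\mathbb Z_{qr}$ with $\mathbb Z_{qr}$ acting via a matrix of order $qr$ in $GL_2(p)$ (forcing $qr\mid p+1$) --- the group displayed in (2)--(3).

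It then remains to analyse that family directly. Writing $N=\langle a,b\rangle$ for the unique normal Sylow $p$--subgroup and $\langle c\rangle\cong\mathbb Z_{qr}$ for a complement, I would determine all proper subgroups of $G$: $N$, the $p+1$ subgroups $S_1,\dots,S_{p+1}$ of order $p$ of $N$, the characteristic Hall subgroups $A:=N\langle c^{r}\rangle$ and $B:=N\langle c^{q}\rangle$, the $p^2$ complements $C_i\cong\mathbb Z_{qr}$ with their Sylow subgroups $Q_i\subset A$ and $R_i\subset B$, and the further subgroups that arise when $2\in\{q,r\}$ from the order--two element's action on $N$. From this one reads off that $\{N,A,B\}$ is a triangle (re--proving (1) here), that the maximum clique of $\mathscr{I}(G)$ has order $4$ (so $\mathscr{I}(G)$ is $K_5$--free, giving (2)), and that $\{N,A,B\}$ joined to $\{S_1,\dots,S_{p+1}\}$ yields $K_{3,p+1}\subseteq\mathscr{I}(G)$. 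Since $qr$ is a product of two distinct primes with $qr\mid p+1$, necessarily $p\ge5$, so by Theorem~\ref{genus 110} one gets $\overline{\gamma}(K_{3,p+1})=\lceil (p-1)/2\rceil\ge2$ and hence $\overline{\gamma}(\mathscr{I}(G))>1$, which proves (4). The same theorem gives $\gamma(K_{3,p+1})=\lceil (p-1)/4\rceil>1$ whenever $p>5$, so the only candidate for toroidality is $p=5$, which (as $qr\mid 6$) forces $\{q,r\}=\{2,3\}$; for that single group of order $150$ I would complete (3) by exhibiting an explicit toroidal embedding of its intersection graph.

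The main obstacle is two--fold. First, the reduction, though conceptually uniform, requires running through all the extension types $G$ of a ``nice'' $H$ and locating an $\mathcal A_1$ (or $K_8$) in each; this is a long but mechanical case analysis resting on the classification of groups of order $p^2qr$. Second, and more delicate, is the endpoint of (3): when $p=5$, $qr=6$ the order--two part of the action contributes extra (order--$10$) subgroups that thicken $\mathscr{I}(G)$, and one must verify by hand --- by drawing the embedding --- that genus one is actually attained rather than exceeded.
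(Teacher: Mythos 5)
Your overall architecture --- establish $C_3$ via two Hall subgroups containing a common Sylow $p$-subgroup, reduce via a normal subgroup of prime index to the groups already classified in Propositions~\ref{intersecting graph t100}, \ref{intersecting graph t1}, \ref{intersecting graph t4}, \ref{intersecting graph t8} and Theorem~\ref{1000}, and then analyse the one surviving family $(\mathbb Z_p\times\mathbb Z_p)\rtimes\mathbb Z_{qr}$ directly --- is essentially the paper's (the paper organizes the reduction around a Sylow basis and around which of $P$, $Q$, $PQ$, $PR$ is normal, but the content is the same). Part (1), part (4), and your use of $\overline{\gamma}(K_{3,p+1})=\lceil(p-1)/2\rceil\ge 2$ together with $\gamma(K_{3,p+1})>1$ for $p>5$ all match the paper's $K_{3,5}$ and $K_{3,7}$ arguments.

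The gap is in the endgame of (2) and (3), precisely at the point you flagged but did not follow through. In the surviving family the matrix $M$ of order $qr$ must act irreducibly on $N=\mathbb Z_p\times\mathbb Z_p$ (otherwise an invariant line gives subgroups of order $pq$ or $pr$ and the uniqueness analysis collapses), so its eigenvalues are conjugate primitive $qr$-th roots of unity in $\mathbb F_{p^2}$. If $2\in\{q,r\}$ --- which is forced in the only candidate for toroidality, $p=5$, $qr=6$ --- then $M^{qr/2}$ is diagonalizable with both eigenvalues $-1$, i.e.\ $c^{qr/2}$ inverts $N$. Hence $\langle a, xc^{qr/2}\rangle$ is a dihedral subgroup of order $2p$ for every $x\in N$; these are exactly the ``extra order-$10$ subgroups'' you mention. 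But then your assertion that the maximum clique of $\mathscr{I}(G)$ has order $4$ fails: for $p=5$, $qr=6$ the subgroup $\langle a\rangle$ is properly contained in eight proper subgroups ($N$, five dihedral groups of order $10$, $N\langle c^{3}\rangle$ of order $50$, and $N\langle c^{2}\rangle$ of order $75$), so $\mathscr{I}(G)$ contains $K_9$ and is neither $K_5$-free nor toroidal; the embedding you defer to a hand drawing does not exist. The correct dichotomy is: if $q$ and $r$ are both odd the subgroup structure is as you describe ($K_5$-free with $K_{3,p+1}$, but then $qr\mid(p+1)$ forces $p\ge 29$, hence $K_{3,7}$ and non-toroidality), whereas if $2\in\{q,r\}$ one of the Hall subgroups $N\langle c^{r}\rangle$, $N\langle c^{q}\rangle$ is generalized dihedral and its intersection graph already contains $K_7$. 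I should add that the paper's own proof stumbles at the same spot: it argues that an element of order $2$ cannot act via a determinant-one matrix of the prescribed shape, concludes ``$q$ is not $2$, likewise $r$ is not $2$'', and then nevertheless exhibits the $p=5$, $qr=6$ group as toroidal, its ``complete subgroup structure'' omitting the order-$2p$ subgroups. Your instinct to re-examine the $p=5$ case by hand was the right one; carrying it out shows that parts (2) and (3) cannot both stand as written.
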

\begin{proof}

\noindent Since $G$ is solvable and it has a Sylow basis $\{P,Q,R\}$, where $P$, $Q$, $R$ denotes the Sylow p,q,r-subgroups of $G$ respectively. Then $PQ$, $PR$, $QR$ are subgroups of $G$ of order $p^2q$, $p^2r$, $qr$ respectively.

\noindent \textbf{Case 1:} A subgroup of $G$ of index $r$ is normal in $G$:
With out loss of generality, we may assume that $PQ$ is normal in $G$.

\noindent \textbf{Subcase 1a:} Suppose $P$ is normal in $PQ$,  then by Theorem~\ref{501}, $P$ is normal in $G$ . Then two possibilities arise:
\begin{enumerate}[\normalfont (i)]
\item  $P\cong \mathbb Z_{p^2}$: Let $H_1$ be the subgroup of $P$ of order $p$. Then $H_1$ is also normal in $G$. Here $H_1R$, $H_1Q$, $H_1QR$ are subgroups of $G$. Also $H_1$ is a subgroup of $H_1Q$, $H_1R$, $P$, $PQ$, $PR$; $QR$ intersect with $PQ$, $PR$, $H_1Q$, $H_1QR$ non-trivially. It follows that $\mathscr{I}(G)$ has a subgraph isomorphic to $\mathcal A_1$ as shown in Figure~\ref{fig:f2}, so $\gamma(\mathscr{I}(G))>1$ and $\overline{\gamma}(\Gamma(G))>1$.

\item  $P\cong \mathbb Z_p\times \mathbb Z_p$: Then $P$ has atleast three subgroups of order $p$, let them be $H_i$, $i= 1, 2, 3$. Suppose $\gamma(\mathscr{I}(PQ))>1$,
$\overline{\gamma}(\mathscr{I}(PQ))>1$ and
$\mathscr{I}(PQ)$ contains $K_5$,
then the same holds for $\mathscr{I}(G)$ also. So it is enough to consider the cases when $\mathscr{I}(PQ)$ is one of planar, toroidal, projective-planar,
$K_5$-free, $C_3$-free and bipartite. By Theorem~\ref{1000}, Propositions~\ref{intersecting graph t1} and \ref{intersecting graph t4}, the only possibilities for $PQ$  such that $\mathscr{I}(PQ)$ satisfying the above properties are $\mathbb Z_{pq}\times \mathbb Z_p$ or $\langle a, b, c~|~ a^p=b^p=c^q=1, ab=ba, cac^{-1}=b,
cbc^{-1}= a^{-1}b^{l} \rangle$, where  $\bigl(\begin{smallmatrix}
  0 & -1\\ 1 & l
\end{smallmatrix} \bigr)$ has order $q$ in $GL_2(p)$, $q~|~(p+1)$. By a similar argument, we can show that the only possibilities for $PR$ are $\mathbb Z_{pr}\times \mathbb Z_p$ or $\langle a, b, c~|~ a^p=b^p=c^r=1, ab=ba, cac^{-1}=b,
cbc^{-1}= a^{-1}b^{k} \rangle$, where $\bigl(\begin{smallmatrix}
  0 & -1\\ 1 & k
\end{smallmatrix} \bigr)$ has order $r$ in $GL_2(p)$, $q~|~(p+1)$.
\begin{itemize}
\item  Suppose either $PQ$ or $PR$ is isomorphic to the  respective first group, with out loss of generality, let us take $PQ$. Then $PQ$ has at least three subgroups $H_iQ$, $i=1$, 2, 3 of order $pq$. Here $PQ$, $PR$, $QR$, $H_1Q$, $H_2Q$, $H_3Q$ intersect with each other non-trivially; $H_1$ is a subgroup of $PQ$, $PR$, $H_1Q$, $P$. It follows that these eight subgroups forms a subgraph of $\mathscr{I}(G)$, which is isomorphic to $\mathcal A_1$ as shown in Figure~\ref{fig:f2}. Therefore, $\gamma(\mathscr{I}(G))>1$ and $\overline{\gamma}(\mathscr{I}(G))>1$.

\item Now assume that $PQ$ and $PR$ are isomorphic to the respective second group.

\hspace{.7cm} Suppose that $PQ$ is the only subgroup of order $p^2q$, but the subgroup  of order $p^2r$ is not unique. If $n_r(G)=1$, then $R$ is normal in $G$. Then $H_1Q$, $H_1R$, $H_1QR$  are subgroups of $G$ of order $pq$, $pr$, $pqr$ respectively, where $H_1$ is subgroup of $P$ of order $p$. Here $PQ$, $PR$, $P$, $H_1Q$, $H_1R$, $H_1QR$ intersect non-trivially; $Q$ is a subgroup of $PQ$, $H_1Q$, $H_1QR$, $QR$. It follows that $\mathscr{I}(G)$ has a subgraph isomorphic to $\mathcal A_1$ as shown in Figure~\ref{fig:f2}. If $n_r(G)\neq 1$, then by Sylow's theorem $n_r(G)= p$, $q$, $pq$, $p^2$ or $p^2q$. Here $n_r(G)=p$, $q$, $pq$ are not possible, since $G$ has at least $2p^2$ subgroups of order $r$. Assume that $n_r(G)=p^2q$. Then for $q\geq 5$, $G$ has at least four subgroups of order $p^2r$, let them be $PR_1$, $PR_2$, $PR_3$, $PR_4$. It follows that $\mathscr{I}(G)$ contains $K_{5,4}$ as a subgraph with bipartition $X:=\{PQ$, $PR_1$, $PR_2$, $PR_3$, $PR_4\}$ and $Y:=\{P$, $H_1$, $H_2$, $H_3\}$, so $\gamma(\mathscr{I}(G))>1$, $\overline{\gamma}(\mathscr{I}(G))>1$, where $H_i$, $i=2$, 3 are subgroups of $P$ of order $p$. Also $PQ$, $P$, $H_1$, $PR_1$, $PR_2$ intersect non-trivially and so they form $K_5$ as a subgraph of $\mathscr{I}(G)$. If $p>2$, $q=3$, then $G$ has at least three subgroups of order $p^2r$ and so $\mathscr{I}(G)$ contains $K_{5,4}$ as a subgraph with bipartition $X:=\{PQ$, $PR_1$, $PR_2$, $PR_3$, $P\}$ and $Y:=\{H_1$, $H_2$, $H_3$, $H_4\}$, so $\gamma(\mathscr{I}(G))>1$, $\overline{\gamma}(\mathscr{I}(G))>1$, where $H_4$ is a  subgroup of $P$ of order $p$. Also $PQ$, $P$, $H_1$, $PR_1$, $PR_2$ intersect non-trivially and so they form $K_5$ as a subgraph of $\mathscr{I}(G)$. If $p=2$, $q=3$, then $n_r(G)=12$, which implies $r=11$. Thus we have $n_p(G)=1$, $n_q(G)=4$, $n_r(G)=12$, and this implies that $G$ has subgroup $QR$ of order $33$. Suppose $QR$ is cyclic, then $n_q(G)=n_r(G)$, which is not possible. Suppose $QR$ is non-cyclic, then $G$ has at least 11 subgroups of order $3$, which is also not possible. If $q=2$, $p\geq 5$, then $\mathscr{I}(G)$ contains $K_{5,4}$ as a subgraph with bipartition $X:=\{PQ$, $PR_1$, $PR_2$, $P\}$ and $Y:=\{H_1$, $H_2$, $H_3$, $H_4$, $H_5\}$, so $\gamma(\mathscr{I}(G))>1$, $\overline{\gamma}(\mathscr{I}(G))>1$, where $H_i$, $i=2$, 3 of $P$ of order $p$. Also $PQ$, $P$, $H_1$, $PR_1$, $PR_2$ intersect non-trivially and so they form $K_5$ as a subgraph of $\mathscr{I}(G)$. If $q=2$, $p=3$, then $n_r(G)=18$, which implies $r=17$. Here $n_r(G)=18$, $n_q(G)=9$ and by a similar argument as above, we can show that such a group does not exist. If $n_r(G)=p^2$, then $G$ has a unique subgroup of order $p^2r$, which is a contradiction to our assumption.

\hspace{.7cm} Suppose the subgroup of order $p^2q$ and $p^2r$ in $G$ are not unique, then we can use the previous argument to show $\gamma(\mathscr{I}(G))>1$, $\overline{\gamma}(\mathscr{I}(G))>1$ and $\mathscr{I}(G)$ contains $K_5$.

\hspace{.7cm} Suppose the subgroup of order $p^2q$ and $p^2r$ in $G$ are unique, then the subgroup structure of  $G$ is as follows: $G$ has unique subgroup of order $p$; $G$ has $p+1$ subgroups of order $p$, let them be $H_i$, $i=1$, 2, $\ldots$, $p+1$.
 Now $PQ$ has $p^2$ subgroups of order $q$, say $Q_i$, $i=1$, $\ldots$, $p^2$ and $PR$  has $p^2$ subgroups of order $r$, say $R_i$, $i=1$, $\ldots$, $p^2$. Now we show that the subgroup of order $qr$ in $G$ is not unique. If it is  unique, then this subgroup is isomorphic to one of $\mathbb Z_{qr}$, $\mathbb Z_q\rtimes \mathbb Z_r$ or $\mathbb Z_r\rtimes \mathbb Z_q$. But in either case, $G$ has a unique subgroup of order either $q$ or $r$, which is not possible.

 Note that the normalizer in $G$ of $Q$, $N_G(Q)$ is a subgroup of $G$ of order $qr$.  Similarly consider $R$, then $N_G(R)$
 is a subgroup of G of order $qr$. If $N_G(Q)$ is non-abelian, then necessarily $r< q$.  Likewise if $N_G(R)$ is non-abelian, then necessarily $q< r$.  As these inequalities cannot both be true, it follows that $G$ has an abelian (and hence cyclic) subgroup of order $qr$, and we may choose $Q$ and $R$ so that $N_G(Q)=N_G(R)= Q\times R$ is this cyclic subgroup. It follows that $G$ has $p^2$ subgroups of order $qr$, let them be $L_i$, $i=1, \ldots , p^2$. This completes the  subgroup structure of $G$.

 By the presentation of the subgroup of order $p^2 q$, an element $c$ of order $q$ acts on the $p$-Sylow subgroup $P$
via a matrix of determinant 1 which is not  $- 1$.  Hence $q$ is not 2.  Likewise $r$ is not 2.  So, $Q \times R$ is uniquely determined up to conjugacy in $GL_2(p)$.  Hence, $G$ is uniquely determined up to isomorphism.

 \hspace{.7cm} A presentation of this group $G$ is $\langle a, b, c~|~a^p=b^p=c^{qr}=1, ab=ba, cac^{-1}=b, cbc^{-1}=ab^l\rangle$, where $l$ is an integer with $\bigl(\begin{smallmatrix}
   0 & -1\\ 1 & l
 \end{smallmatrix} \bigr)$ has order $qr$ in $GL_2(p)$, $qr~|~(p+1)$.
 The structure of  $\mathscr{I}(G)$ is shown in Figure~\ref{fig:int f111}. Note that $\mathscr{I}(G)$ is $K_5$-free. If $p\geq 7$, then $\mathscr{I}(G)$ contains $K_{3,7}$ as a subgraph with bipartition $X:=\{PQ, PR, P\}$ and $Y:=\{H_1, H_2, H_3, H_4, H_5, H_6, H_7\}$, so $\gamma(\mathscr{I}(G))>1$ and $\overline{\gamma}(\mathscr{I}(G))>1$. If $p = 5$, then $\mathscr{I}(G)$ is toroidal and the corresponding toroidal embedding is shown in Figure~\ref{fig:int f11}.
Moreover, $\mathscr{I}(G)$ contains $K_{3,5}$ as a subgraph with bipartition $X:=\{PQ, PR, P\}$ and $Y:=\{H_1, H_2, H_3, H_4, H_5\}$, so $\overline{\gamma}(\mathscr{I}(G))>1$.
 Since $qr | (p+1)$, so $p\leq 3$ is not possible.

 \begin{figure}[ ht ]
 	\begin{center}
 		\includegraphics[scale=.8]{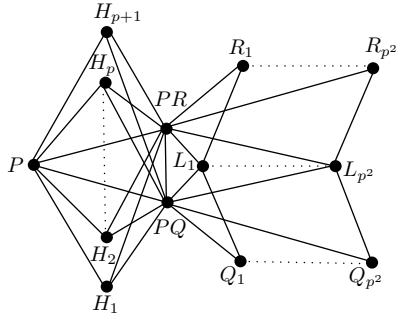}
 		\caption{The structure of $\mathscr{I}(G)$.}
 		\label{fig:int f111}
 	\end{center}
 \end{figure}

 \begin{figure}[ ht ]
 	\begin{center}
 		\includegraphics[scale=.8]{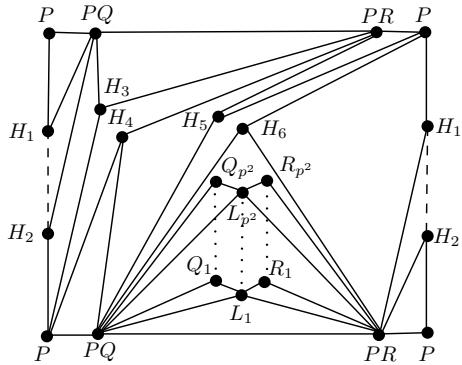}
 		\caption{An embedding of $\mathscr{I}(G)$ in the torus.}
 		\label{fig:int f11}
 	\end{center}
 \end{figure}


\end{itemize}
\end{enumerate}

\noindent \textbf{Subcase 1b:} Suppose $P$ is non-normal in $PQ$, then  $Q$ is normal in $PQ$. So, by Theorem~\ref{501}(iii), $Q$ is normal in $G$. Then $H_1Q$, $H_1R$, $H_1QR$  are subgroups of $G$ of order $pq$, $pr$, $pqr$ respectively, where $H_1$ is subgroup of $P$ of order $p$. Here $PQ$, $PR$, $P$, $H_1Q$, $H_1R$, $H_1QR$ intersect non-trivially; $Q$ is a subgroup of $PQ$, $H_1Q$, $H_1QR$, $QR$. It follows that $\mathscr{I}(G)$ has a subgraph isomorphic to $\mathcal A_1$ as shown in Figure~\ref{fig:f2}.

\noindent \textbf{Case 2:} A subgroup of $G$ of index $q$ is normal in $G$: With out loss of generality, we may assume that,  $PR$ is normal in  $G$. Here we can use a similar argument as in Subcase 1a, by taking $R$ instead of $Q$ and end up with the same group which we have obtained in Subcase 1a, whose intersection graph is toroidal, non-projective-planar and $K_5$-free.

\noindent\textbf{Case 3:} A subgroup of $G$ of index $p$ is normal in $G$: With out loss of generality, we may assume that, $H_1QR$ is normal in $G$, where $H_1$ is a subgroup of $P$ of order $p$. Then by Theorem~\ref{501}(iii), $H_1$ is normal in $G$. So, $H_1Q$, $H_1R$, $H_1QR$  are subgroups of $G$. Here $PQ$, $PR$, $P$, $H_1Q$, $H_1R$, $H_1QR$ intersect non-trivially; $Q$ is a subgroup of $PQ$, $H_1Q$, $H_1QR$, $QR$. It follows that $\mathscr{I}(G)$ has a subgraph isomorphic to $\mathcal A_1$ as shown in Figure~\ref{fig:f2}.

Putting together all the cases,  the result follows.
\end{proof}

\begin{pro}\label{intersection graph t11}
Let $G$ be a non-abelian solvable group of order $p^3qr$, where $p$, $q$, $r$ are distinct primes. Then $\gamma(\mathscr{I}(G))>1$,  $\overline{\gamma}(\mathscr{I}(G))>1$ and $\mathscr{I}(G)$ contains $K_5$.
\end{pro}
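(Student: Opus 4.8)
The plan is to reduce everything to a Hall subgroup analysis, using that $G$ is solvable. Since $G$ is solvable it has a Sylow basis $\{P,Q,R\}$ with $|P|=p^{3}$, $|Q|=q$, $|R|=r$, and then $PQ$, $PR$ (and $QR$, $G=PQR$) are subgroups of $G$, of orders $p^{3}q$ and $p^{3}r$ respectively. First I would record the auxiliary fact that every \emph{non-cyclic} group $M$ of order $p^{3}q$ satisfies $\gamma(\mathscr{I}(M))>1$, $\overline{\gamma}(\mathscr{I}(M))>1$ and $\mathscr{I}(M)\supseteq K_{5}$: if $M$ is non-abelian this is precisely Proposition~\ref{intersecting graph t5} (with $\alpha=3$, no exceptions); if $M$ is non-cyclic abelian then $M\cong\mathbb Z_{p^{2}}\times\mathbb Z_{p}\times\mathbb Z_{q}\;(\cong\mathbb Z_{p^{2}q}\times\mathbb Z_{p})$ or $M\cong\mathbb Z_{p}\times\mathbb Z_{p}\times\mathbb Z_{p}\times\mathbb Z_{q}$ (which contains $\mathbb Z_{p}\times\mathbb Z_{p}\times\mathbb Z_{p}$), and in either case the conclusion is contained in Proposition~\ref{intersecting graph t1} (Cases~4, 8 and 9 of its proof; note that none of these groups is planar, toroidal, projective-planar or $K_{5}$-free). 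Since $\mathscr{I}(PQ)$ is an induced subgraph of $\mathscr{I}(G)$ (and likewise $\mathscr{I}(PR)$), if \emph{either} $PQ$ or $PR$ is non-cyclic, the statement for $G$ follows at once from subgraph-monotonicity of $\gamma$, $\overline{\gamma}$ and of containing $K_{5}$.

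It remains to treat the case in which both $PQ\cong\mathbb Z_{p^{3}q}$ and $PR\cong\mathbb Z_{p^{3}r}$ are cyclic. Then $P=\langle a\rangle\cong\mathbb Z_{p^{3}}$, and since $P$ commutes elementwise with $Q$ (inside the abelian group $PQ$) and with $R$ (inside $PR$), while $G=\langle P,Q,R\rangle$, we obtain $P\le Z(G)$; in particular every subgroup of $P$ is normal in $G$, so $SP'$ is a subgroup of $G$ for every subgroup $S\le G$ and every $P'\le P$. Writing $Z:=\langle a^{p^{2}}\rangle$ for the subgroup of $P$ of order $p$, the eight subgroups
\[
Z,\quad \langle a^{p}\rangle,\quad \langle a\rangle,\quad ZQ,\quad \langle a^{p}\rangle Q,\quad \langle a\rangle Q,\quad ZR,\quad \langle a^{p}\rangle R
\]
have the pairwise distinct orders $p,\,p^{2},\,p^{3},\,pq,\,p^{2}q,\,p^{3}q,\,pr,\,p^{2}r$, all of which are proper divisors of $|G|=p^{3}qr$; hence they are eight distinct proper subgroups of $G$, each of which contains the central subgroup $Z$, so any two of them intersect non-trivially. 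Therefore they induce a copy of $K_{8}$ in $\mathscr{I}(G)$, and by the remark following Theorem~\ref{genus 110} we have $\gamma(K_{8})>1$ and $\overline{\gamma}(K_{8})>1$, while $K_{5}\subseteq K_{8}$; consequently $\gamma(\mathscr{I}(G))>1$, $\overline{\gamma}(\mathscr{I}(G))>1$ and $\mathscr{I}(G)$ contains $K_{5}$, which finishes the proof.

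I expect the only delicate point to be the first step: checking that the classification of non-cyclic groups of order $p^{3}q$ is complete and that each of them has genuinely already been shown (in Propositions~\ref{intersecting graph t1} and \ref{intersecting graph t5}) to have non-toroidal, non-projective-planar intersection graph containing $K_{5}$, and hence that the only configuration left to analyse is ``$PQ$ and $PR$ both cyclic''. Once that reduction is secured, everything else is routine: the centrality $P\le Z(G)$ is immediate, and the explicit $K_{8}$ settles the remaining case instantly. It is worth noting that the $K_{8}$ argument of the second step never uses non-abelianness of $G$ (it would apply verbatim to $\mathbb Z_{p^{3}qr}$), so the two cases dovetail without any friction and the non-abelian hypothesis only plays the (implicit) role of placing this result in the present section.
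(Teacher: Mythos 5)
Your proposal is correct and follows essentially the same route as the paper: use the Sylow basis $\{P,Q,R\}$, invoke the earlier propositions to reduce to the case where $PQ$ and $PR$ are both cyclic, and then exhibit a $K_8$ consisting of eight proper subgroups all containing the unique order-$p$ subgroup of $P$. The only cosmetic difference is your detour through $P\le Z(G)$ to legitimise the products $ZQ$, $\langle a^{p}\rangle Q$, etc., which is not actually needed since each of your eight subgroups already sits inside one of the cyclic groups $PQ$ or $PR$ (the paper simply takes the subgroups of each admissible order directly).
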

\begin{proof}
Since $G$ is solvable, so it has a Sylow basis $ \{P , Q, R \}$, where $p, Q, R$ are Sylow $p,q,r$-subgroup of $G$ respectively. Then $PQ$, $PR$ are subgroups of $G$. Suppose $\gamma(\mathscr{I}(PQ))>1$,
$\overline{\gamma}(\mathscr{I}(PQ))>1$ and
$\mathscr{I}(PQ)$ contains $K_5$,
then the same holds for $\mathscr{I}(G)$ also. So it is enough to consider the cases when $\mathscr{I}(PQ)$ is one of planar, toroidal, projective-planar,
$K_5$-free, $C_3$-free and bipartite. By Theorem~\ref{1000},
Propositions~\ref{intersecting graph t100}, \ref{intersecting graph t1} and \ref{intersecting graph t5}, $PQ\cong \mathbb Z_{p^3q}$. By a similar argument, we can show that the only possibility of $PR$ is $\mathbb Z_{p^2r}$.
Let $H_1$, $H_2$, $H_3$, $H_4$, $H_5$ be subgroups of $PQ$ of orders $p$, $p^2$, $p^3$, $pq$, $p^2q$ respectively; $H$ be a subgroup of $PR$ of order $p^2r$. Here $H_1$ is a
subgroup of $H$, $PQ$, $PR$, $H_i$, $i=2$ to 5. It follows that, these five subgroups intersect with each other non-trivially and so $\mathscr{I}(G)$ contains
$K_8$ as a subgraph. Hence, $\gamma(\mathscr{I}(G))>1$ and $\overline{\gamma}(\mathscr{I}(G))>1$.
\end {proof}

\begin{pro}\label{intersection graph t12}
Let $G$ be a non-abelian solvable group of order $p^2q^2r$, where $p$, $q$, $r$ are distinct primes. Then $\gamma(\mathscr{I}(G))>1$,  $\overline{\gamma}(\mathscr{I}(G))>1$ and $\mathscr{I}(G)$ contains $K_5$.
\end{pro}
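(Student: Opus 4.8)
The plan is to pass, via solvability, to a smaller group that is already understood. Since $G$ is solvable it has a normal subgroup $N$ of prime index $\ell\in\{p,q,r\}$, and $\mathscr{I}(N)$ is an induced subgraph of $\mathscr{I}(G)$ (every proper subgroup of $N$ is a proper subgroup of $G$ and intersections are unchanged); so in the generic situation the three assertions for $\mathscr{I}(G)$ follow from the corresponding ones for $\mathscr{I}(N)$. According as $\ell=r$, $q$, $p$ one has $|N|=p^2q^2$, $p^2qr$, $pq^2r$, and I would organise the proof around these three possibilities, using Propositions~\ref{intersecting graph t100}, \ref{intersecting graph t1}, \ref{intersecting graph t6} and \ref{intersecting graph t10}.

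For $|N|=p^2q^2$: a non-cyclic abelian $N$ is none of the non-cyclic abelian groups in Theorem~\ref{1000} or Proposition~\ref{intersecting graph t1} (whose orders are $p^2$, $8$, $12$, $27$, $125$, $9q$), so $\mathscr{I}(N)$ is non-planar, non-toroidal, non-projective-planar and not $K_5$-free, i.e.\ $\gamma(\mathscr{I}(N))>1$, $\overline{\gamma}(\mathscr{I}(N))>1$ and $K_5\subseteq\mathscr{I}(N)$; a non-abelian $N$ is covered by Proposition~\ref{intersecting graph t6} unless it is the $K_5$-free group there (treated below); and if $N\cong\mathbb{Z}_{p^2q^2}$ then its subgroup $C$ of order $p$ is characteristic in $N$, hence normal in $G$, and with $R$ a Sylow $r$-subgroup and $QR$ (order $q^2r$) a member of a Sylow basis of $G$, the subgroups $C$, the four over-subgroups of $C$ inside $N$, $N$, $CR$ and $C\!\cdot\!QR$ form a $K_8$ in $\mathscr{I}(G)$, which by Theorem~\ref{genus 110} gives all three conclusions at once. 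For $|N|=p^2qr$ or $pq^2r$: a cyclic $N$ is handled by Proposition~\ref{intersecting graph t100} together with Theorem~\ref{l1} ($\mathscr{I}(N)$ is non-planar, non-toroidal, non-projective-planar and not $K_5$-free); a non-cyclic abelian $N\cong\mathbb{Z}_{pqr}\times\mathbb{Z}_p$, resp.\ $\mathbb{Z}_{pqr}\times\mathbb{Z}_q$, again lies outside all of those lists, so the same holds; and a non-abelian $N$ is handled by Proposition~\ref{intersecting graph t10}, which gives $\overline{\gamma}(\mathscr{I}(N))>1$ unconditionally and $\gamma(\mathscr{I}(N))>1$, $K_5\subseteq\mathscr{I}(N)$ except when $N$ is the $K_5$-free group listed there.

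It remains to treat uniformly the exceptional groups, i.e.\ the $K_5$-free non-abelian group of order $p^2q^2$ and the $K_5$-free non-abelian groups of orders $p^2qr$ and $pq^2r$. Take the first: there $N=\langle a,b,c\mid a^p=b^p=c^{q^2}=1,\ldots\rangle$ with $q^2\mid(p+1)$, and $P_0:=\langle a,b\rangle\cong\mathbb{Z}_p\times\mathbb{Z}_p$ is the unique subgroup of order $p^2$, hence characteristic in $N$, hence normal in $G$, and is a Sylow $p$-subgroup of $G$, while $N/P_0\cong\mathbb{Z}_{q^2}$ is cyclic. Then $G/P_0$ has order $q^2r$ and contains the normal cyclic subgroup $N/P_0$ of prime index, so it has at least five proper subgroups: $1$, one of order $q$, one of order $q^2$ ($=N/P_0$), one of order $r$, and one of order $qr$ (a characteristic $\mathbb{Z}_q\le N/P_0$ times a Sylow $r$-subgroup). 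Their preimages are five proper subgroups of $G$, of orders $p^2$, $p^2q$, $p^2q^2$, $p^2r$, $p^2qr$, each containing $P_0$, hence pairwise adjacent: a $K_5$. Since $q^2\mid(p+1)$, $P_0$ contains $p+1\ge q^2\ge 4$ subgroups of order $p$, each lying in all five of those; so $\mathscr{I}(G)$ contains $K_{4,5}$, and Theorem~\ref{genus 110} gives $\gamma(\mathscr{I}(G))>1$ and $\overline{\gamma}(\mathscr{I}(G))>1$. The $K_5$-free groups of orders $p^2qr$ and $pq^2r$ are handled identically---with $P_0=\langle a,b\rangle$ of order $p^2$, resp.\ $q^2$, and $N/P_0$ cyclic of order $qr$, resp.\ $pr$---except that the presentation condition $qr\mid(p+1)$, resp.\ $pr\mid(q+1)$, now yields at least $6$ subgroups of prime order inside $P_0$, so one even obtains a $K_{5,6}$. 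Assembling the cases proves the proposition.

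The delicate point, I expect, is the extremal instance $p=3$, $q=2$ of the order-$p^2q^2$ exception, where $P_0\cong\mathbb{Z}_3\times\mathbb{Z}_3$ has only four subgroups of order $3$, so that a $K_{4,4}$ would be insufficient: there one uses that $r$ is odd, whence the Sylow $r$-subgroup of $G/P_0$ acts trivially on $N/P_0\cong\mathbb{Z}_4$, forcing $G/P_0$ cyclic of order $4r$; this produces a fifth proper subgroup of $G$ containing $P_0$ (of order $18r$) and restores the $K_{4,5}$. The remainder is bookkeeping: confirming that no group from Propositions~\ref{intersecting graph t100}, \ref{intersecting graph t1}, \ref{intersecting graph t6}, \ref{intersecting graph t10} or Theorem~\ref{1000} has order $p^2q^2$, $p^2qr$ or $pq^2r$ beyond those named, and that in each of the $K_8$, $K_5$ and $K_{4,5}$ constructions the listed subgroups are genuinely distinct and proper.
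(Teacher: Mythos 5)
Your argument is correct, and it reaches the three conclusions by a route that overlaps with, but is not the same as, the paper's. The paper first splits on whether the Hall subgroups of order $p^2r$ or $q^2r$ are unique, passing in one branch to a Hall subgroup of order $p^2qr$ or $pq^2r$ and only in the other branch to a normal subgroup of prime index, and it finishes the exceptional ($K_5$-free) groups with the $K_{3,7}$ of Figure~\ref{fig:int f111} and a $\mathcal B_2$ obstruction. You instead fix one normal subgroup $N$ of prime index from the outset, split on $|N|\in\{p^2q^2,\,p^2qr,\,pq^2r\}$, and dispose of the exceptional groups by observing that their Sylow subgroup $P_0=\langle a,b\rangle$ is characteristic in $N$, hence normal in $G$, so that five subgroups of $G/P_0$ (of orders $1$, $q$, $q^2$, $r$, $qr$, say) pull back to a $K_5$ of proper subgroups all containing $P_0$, which together with the $p+1\geq 4$ order-$p$ subgroups of $P_0$ yields a $K_{4,5}$ (indeed $K_{6,5}$ when $qr\mid p+1$); your $K_8$ through the normal order-$p$ subgroup handles the cyclic $N\cong\mathbb Z_{p^2q^2}$, whose intersection graph is itself toroidal and so cannot be used directly for the genus claim. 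What your decomposition buys is that you never need the uniqueness dichotomy for Hall subgroups, and your explicit complete and complete-bipartite subgraphs avoid appealing to the specific torus and projective-plane obstructions; what the paper's buys is a shorter appeal to Proposition~\ref{intersecting graph t10} via the order-$p^2qr$ Hall subgroup when it exists.

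Two small points of bookkeeping. First, when you convert "not toroidal" from Proposition~\ref{intersecting graph t10}(3) into $\gamma(\mathscr{I}(N))>1$ for the non-exceptional non-abelian $N$ of order $p^2qr$, you also need that $\mathscr{I}(N)$ is not planar; this does follow, since Theorem~\ref{1000} lists no non-abelian group of order $p^2qr$, but it should be said. Second, your "delicate point" for $p=3$, $q=2$ in the order-$p^2q^2$ exception is a non-issue: the general construction already produces five proper over-groups of $P_0$ (of orders $p^2$, $p^2q$, $p^2q^2$, $p^2r$, $p^2qr$, the last via the characteristic $\mathbb Z_q\leq N/P_0$ times a Sylow $r$-subgroup), so $K_{4,5}$ is obtained without the cyclicity argument for $G/P_0$; your extra argument is correct but redundant.
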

\begin{proof}
Since $G$ is a solvable group, it has subgroups of order $p^2r$ and $q^2r$. Now we split the proof in to the following cases:\\
 \noindent \textbf{Case 1:} A subgroup of $G$ of order either $p^2r$ or $q^2r$ is unique: Then $G$ has a subgroup of order either $p^2qr$ or $pq^2r$,  let it be $H$, and without loss of generality, we assume that $|H|=p^2qr$. Suppose $\gamma(\mathscr{I}(H))>1$,
$\overline{\gamma}(\mathscr{I}(H))>1$ and
$\mathscr{I}(H)$ contains $K_5$,
then the same holds for $\mathscr{I}(G)$ also. So it is enough to consider the cases when $\mathscr{I}(H)$ is one of planar, toroidal, projective-planar,
$K_5$-free, $C_3$-free and bipartite. By Theorem~\ref{1000}, Propositions~\ref{intersecting graph t100}, \ref{intersecting graph t1} and \ref{intersecting graph t10}, $H\cong \langle a, b, c~|~ a^p=b^p=c^{qr}=1, ab=ba, cac^{-1}=b, cbc^{-1}=ab^l\rangle$, where $\bigl(\begin{smallmatrix}
 0 & -1\\ 1 & l
\end{smallmatrix} \bigr)$ has order $qr$ in $GL_2(p)$, $qr|(p+1)$. So by Figure~\ref{fig:int f111}, $H$ together with its proper subgroups forms $K_{3,7}$ as a subgraph of $\mathscr{I}(G)$ and it contains $K_5$ as a subgraph.

\noindent \textbf{Case 2:} Subgroups of $G$ of order $p^2r$, $q^2r$ are not unique:\\
Since $G$ is a solvable group, so it has a subgroup, say $N$ of prime index. Now we need to consider the following subcases:

\noindent \textbf{Subcase 2a:} Let $[G:N]=p$ or $q$, without loss of generality, we assume that $[G:N]=q$, and so $|N|=p^2qr$. Then by the argument mentioned in Case 1, $\gamma(\mathscr{I}(G))>1$, $\overline{\gamma}(\mathscr{I}(G))>1$ and $\mathscr{I}(G)$ contains $K_5$ as a subgraph.

\noindent \textbf{Subcase 2b:} Let $[G:N]=r$. Then $|N|=p^2q^2$. Suppose $\gamma(\mathscr{I}(N))>1$,
$\overline{\gamma}(\mathscr{I}(N))>1$ and
$\mathscr{I}(N)$ contains $K_5$,
then the same holds for $\mathscr{I}(G)$ also. So it is enough to consider the cases when $\mathscr{I}(N)$ is one of planar, toroidal, projective-planar,
$K_5$-free, $C_3$-free and bipartite. By Theorem~\ref{1000}, Propositions~\ref{intersecting graph t100}, \ref{intersecting graph t1} and \ref{intersecting graph t6}, $N\cong \mathbb Z_{p^2q^2}$ or $\langle a, b, c~|~ a^p=b^p=c^{q^2}=1, ab=ba, cac^{-1}=b, cbc^{-1}=ab^l\rangle$, where $\bigl(\begin{smallmatrix}
 0 & -1\\ 1 & l
\end{smallmatrix} \bigr)$ has order $q^2$ in $GL_2(p)$, $q^2|(p+1)$. If $N\cong \mathbb Z_{p^2q^2}$, then $N$ together with its proper subgroups forms a subgraph isomorphic to $\mathcal A_1$ shown in Figure~\ref{fig:f2}. Therefore, $\gamma(\mathscr{I}(G))>1$ and $\overline{\gamma}(\mathscr{I}(G))>1$.

If $N$ isomorphic to the second group, then it has a subgroup of order $p^2q$, let it be $I$.
Suppose $G$ has unique subgroup of order $p^2r$, then $G$ has a subgroup of order $p^2qr$, let it be $H$. Suppose $\gamma(\mathscr{I}(H))>1$,
$\overline{\gamma}(\mathscr{I}(H))>1$ and
$\mathscr{I}(H)$ contains $K_5$,
then the same holds for $\mathscr{I}(G)$ also. So it is enough to consider the cases when $\mathscr{I}(H)$ is one of planar, toroidal, projective-planar,
$K_5$-free, $C_3$-free and bipartite. By Theorem~\ref{1000}, Propositions~\ref{intersecting graph t100}, \ref{intersecting graph t1} and \ref{intersecting graph t10}, $H\cong \langle a, b, c~|~a^p=b^p=c^{qr}=1, ab=ba, cac^{-1}=b^{-1}, cbc^{-1}=ab^l\rangle$, where $\bigl(\begin{smallmatrix}
 0 & -1\\ 1 & l
\end{smallmatrix} \bigr)$ has order $qr$ in $GL_2(p)$, $qr|(p+1)$. By Figure~\ref{fig:int f111}, $H$ together with its subgroups forms $K_{3,7}$ as a subgraph of $\mathscr{I}(G)$ and it contains $K_5$. Suppose the subgroup of order $p^2r$ is not unique, then $G$ has at least two subgroups of order $p^2r$, let them be $PR_1$, $PR_2$. Let $L_1$ be a subgroup of $Q$ of order $q$. Here $H_1$ is a subgroup of $P$,  $PR_1$, $PR_2$, $N$; $L_1$ is a subgroup of $Q$, $N$, $I$, $QR$. It follows that $\mathscr{I}(G)$ has a subgraph isomorphic to $\mathcal B_2$ as shown in Figure~\ref{fig:f2}. Therefore, $\gamma(\mathscr{I}(G))>1$, $\overline{\gamma}(\mathscr{I}(G))>1$ and $\mathscr{I}(G)$ contains $K_5$ as a subgraph.

 Proof follows by combining all the cases together.
\end{proof}

\begin{pro}\label{intersection graph t13}
Let $G$ be a non-abelian solvable group of order $p^\alpha q^\beta r^\delta$, where $p$, $q$, $r$ are distinct primes and $\alpha+\beta+\delta=6$. Then $\gamma(\mathscr{I}(G))>1$,  $\overline{\gamma}(\mathscr{I}(G))>1$ and $\mathscr{I}(G)$ contains $K_5$.
\end{pro}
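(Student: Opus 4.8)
The plan is to reduce everything to the earlier propositions by passing to a well chosen proper subgroup of $G$. Since $\alpha+\beta+\delta=6$ with $\alpha,\beta,\delta\geq 1$, the order $|G|$ has exponent multiset $\{4,1,1\}$, $\{3,2,1\}$ or $\{2,2,2\}$, so (after relabelling the primes) $|G|$ is one of $p^4qr$, $p^3q^2r$, $p^2q^2r^2$. The mechanism used repeatedly in this paper is that for any proper subgroup $H\leq G$ the graph $\mathscr{I}(H)$ is an induced subgraph of $\mathscr{I}(G)$: its vertices are the proper subgroups of $H$, which are proper in $G$, and two of them are adjacent in $\mathscr{I}(H)$ if and only if they are adjacent in $\mathscr{I}(G)$. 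Hence it suffices to find a proper subgroup $H$ with $\gamma(\mathscr{I}(H))>1$, $\overline{\gamma}(\mathscr{I}(H))>1$ and $K_5\subseteq\mathscr{I}(H)$; or, more simply, to exhibit eight pairwise intersecting proper subgroups of $G$, which span a $K_8$, so that by the remark following Theorem~\ref{genus 110} all three conclusions hold for $G$ at once.

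Since $G$ is solvable, $G/G'$ is a nontrivial finite abelian group, so $G$ has a normal subgroup $N$ of prime index, say $[G:N]=\ell\in\{p,q,r\}$; I will also use that solvable groups possess Hall $\pi$-subgroups for every set $\pi$ of primes. Suppose first that $|N|$ still involves three distinct primes. Then its exponent multiset is $\{3,1,1\}$ or $\{2,2,1\}$, i.e.\ $|N|$ has the form $p^3qr$ or $p^2q^2r$. If $N$ is non-abelian, Proposition~\ref{intersection graph t11} or Proposition~\ref{intersection graph t12} applies to $N$ and yields $\gamma(\mathscr{I}(N))>1$, $\overline{\gamma}(\mathscr{I}(N))>1$, $K_5\subseteq\mathscr{I}(N)$, hence the same for $G$. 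If $N$ is abelian, the same three conclusions follow from Proposition~\ref{intersecting graph t100} when $N$ is cyclic and from Proposition~\ref{intersecting graph t1} when $N$ is non-cyclic abelian, since none of the three-prime orders $p^3qr$, $p^2q^2r$ occur in the planar, toroidal or projective-planar lists of those statements, and a subgroup of $N$ of prime order together with enough of its overgroups already spans a $K_5$. In particular this completely disposes of $|G|=p^2q^2r^2$, where every prime-index subgroup has exponent multiset $\{2,2,1\}$.

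Now suppose $|N|$ involves only two distinct primes; this can happen only when $|G|=p^4qr$ with $\ell\in\{q,r\}$, so $|N|=p^4r$ or $p^4q$, or when $|G|=p^3q^2r$ with $\ell=r$, so $|N|=p^3q^2$. If $|N|=p^3q^2$: when $N$ is non-abelian, Proposition~\ref{intersecting graph t7} applies (both exponents are $\geq 2$); when $N$ is cyclic, $\mathbb{Z}_{p^3q^2}$ is neither planar nor toroidal nor projective-planar by Theorem~\ref{l1} and Proposition~\ref{intersecting graph t100}, and its subgroup of order $p$ together with its overgroups of orders $p^2,p^3,pq,p^2q,p^3q,pq^2,p^2q^2$ spans a $K_8$; when $N$ is non-cyclic abelian, Proposition~\ref{intersecting graph t1} gives the conclusion. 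If $|N|=p^4r$ (the case $p^4q$ is symmetric): when $N$ is non-abelian, Proposition~\ref{intersecting graph t5} applies; when $N$ is non-cyclic abelian, Proposition~\ref{intersecting graph t1} applies. The only remaining possibility is $N\cong\mathbb{Z}_{p^4r}$, and here $\mathscr{I}(N)$ is toroidal, so passing to $N$ alone is not enough; this is the main obstacle.

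To overcome it, note that when $N\cong\mathbb{Z}_{p^4r}$ the Sylow $p$-subgroup $P$ of $G$ is cyclic of order $p^4$, it is characteristic in $N$ and hence normal in $G$, and $P$ is contained in every Hall $\{p,q\}$-subgroup of $G$. Pick such a Hall subgroup $H$, of order $p^4q$. If $H$ is non-abelian, Proposition~\ref{intersecting graph t5} applies to $H$; if $H$ is non-cyclic abelian, Proposition~\ref{intersecting graph t1} applies; otherwise $H\cong\mathbb{Z}_{p^4q}$. In this last case the unique subgroup $P_1$ of $P$ of order $p$ is contained in the six subgroups of $N$ of orders $p^2,p^3,p^4,pr,p^2r,p^3r$ and in the three subgroups of $H$ of orders $pq,p^2q,p^3q$; these nine subgroups have distinct orders and all contain $P_1$, hence are pairwise intersecting and span a $K_8$ in $\mathscr{I}(G)$. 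This yields $\gamma(\mathscr{I}(G))>1$, $\overline{\gamma}(\mathscr{I}(G))>1$ and $K_5\subseteq\mathscr{I}(G)$, completing the proof. Throughout, the only point requiring care is the bookkeeping in the abelian sub-cases, namely checking that the relevant two- and three-prime orders do not appear among the exceptional groups of Propositions~\ref{intersecting graph t100} and~\ref{intersecting graph t1}; the $K_8$-counting arguments themselves are routine inspections of small subgroup lattices.
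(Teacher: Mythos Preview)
Your argument is correct. Both your proof and the paper's reduce to earlier propositions via a proper subgroup, but the organisation differs. For $|G|=p^4qr$ and $|G|=p^3q^2r$ the paper goes directly to a Hall $\{p,q\}$-subgroup $H$ of order $p^4q$ or $p^3q^2$; by the cited propositions the only problematic possibility is $H\cong\mathbb Z_{p^4q}$, and then $H$ together with its seven proper subgroups containing the order-$p$ subgroup form a $K_8$ in $\mathscr{I}(G)$. Your route---first taking a normal subgroup of prime index---works as well, but it creates the extra case $N\cong\mathbb Z_{p^4r}$ (whose intersection graph is toroidal), which you then resolve by producing exactly the same Hall $\{p,q\}$-subgroup that the paper starts with. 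So the paper's path is shorter, while yours is a bit more systematic in treating all three shapes uniformly via prime-index quotients before falling back on Hall subgroups. For $|G|=p^2q^2r^2$ the two proofs coincide.
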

\begin{proof}
Here $|G|=p^4qr$, $p^3q^2r$ or $p^2q^2r^2$. First let us assume that  $|G|$ is either $p^4qr$ or $p^3q^2r$. Since $G$ is solvable, it has a subgroup of order either $p^4q$ or $p^3q^2$ respectively, let it be $H$. Suppose $\gamma(\mathscr{I}(H))>1$,
$\overline{\gamma}(\mathscr{I}(H))>1$ and
$\mathscr{I}(H)$ contains $K_5$,
then the same holds for $\mathscr{I}(G)$ also. So it is enough to consider the cases when $\mathscr{I}(H)$ is one of planar, toroidal, projective-planar,
$K_5$-free, $C_3$-free and bipartite. By Theorem~\ref{1000}, Propositions~\ref{intersecting graph t100}, \ref{intersecting graph t1}, \ref{intersecting graph t5} and \ref{intersecting graph t7}, we have $H\cong \mathbb Z_{p^4q}$. Here $H$ together with its proper subgroups forms $K_8$, so $\gamma(\mathscr{I}(G))>1$ and $\overline{\gamma}(\mathscr{I}(G))>1$.

If $|G|=p^2q^2r^2$, then $G$ has a normal subgroup $N$ with prime index, without loss of generality, say $r$. Since $|N|=p^2q^2r$, and so by Theorem~\ref{1000}, Propositions~\ref{intersecting graph t100}, \ref{intersecting graph t1} and \ref{intersection graph t12}, $\gamma(\mathscr{I}(N))>1$, $\overline{\gamma}(\mathscr{I}(N))>1$ and $\mathscr{I}(N)$ contains $K_5$. Therefore, the same holds for $\mathscr{I}(G)$ also.
Hence $\gamma(\mathscr{I}(G))>1$, $\overline{\gamma}(\mathscr{I}(G))>1$  and  $\mathscr{I}(G)$ contains $K_5$ as a subgraph. This completes the proof.
\end{proof}

\begin{pro}\label{intersection graph t14}
Let $G$ be a non-abelian solvable group of order $p^\alpha q^\beta r^\delta$, where $p$, $q$, $r$ are distinct primes and $\alpha+\beta+\delta\geq 7$. Then $\gamma(\mathscr{I}(G))>1$,  $\overline{\gamma}(\mathscr{I}(G))>1$ and $\mathscr{I}(G)$ contains $K_5$.
\end{pro}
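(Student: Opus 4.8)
The plan is to prove the statement by induction on $n:=\alpha+\beta+\delta\geq 7$, using as the one structural input the following elementary remark. If $H$ is any proper subgroup of $G$, then every proper subgroup of $H$ is a proper subgroup of $G$, and two subgroups of $H$ intersect non-trivially in $H$ if and only if they do so in $G$; hence $\mathscr{I}(H)$ is a subgraph of $\mathscr{I}(G)$. Since both $\gamma$ and $\overline{\gamma}$ are monotone under passing to subgraphs, and ``containing $K_5$'' is inherited by supergraphs, it suffices to exhibit a proper subgroup $H\leq G$ with $\gamma(\mathscr{I}(H))>1$, $\overline{\gamma}(\mathscr{I}(H))>1$ and $K_5\subseteq\mathscr{I}(H)$.

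The right $H$ is a subgroup of prime index. Because $G$ is a non-trivial finite solvable group, $G\neq G'$, so $G/G'$ is a non-trivial finite abelian group and hence has a subgroup of prime index; pulling this back gives a normal subgroup $H\leq G$ of prime index $s\in\{p,q,r\}$. Then $H$ is solvable, $|H|=|G|/s$ has exponent sum $n-1\geq 6$, and $|H|$ is still divisible by at least two of $p,q,r$ (it loses the prime $s$ only when $s$ occurs to the first power in $|G|$).

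Now I would split into three exhaustive cases according to the isomorphism type of $H$. \textbf{Case 1: $H$ is non-abelian and $|H|$ is divisible by all of $p$, $q$, $r$.} If $n-1=6$, Proposition~\ref{intersection graph t13} applies to $H$ directly; if $n-1\geq 7$, the induction hypothesis (Proposition~\ref{intersection graph t14} for the smaller exponent sum $n-1$) applies to $H$. Either way $\gamma(\mathscr{I}(H))>1$, $\overline{\gamma}(\mathscr{I}(H))>1$ and $K_5\subseteq\mathscr{I}(H)$. \textbf{Case 2: $H$ is non-abelian and $|H|$ involves only two of the primes,} say $|H|=q^{\beta'}r^{\delta'}$ with $\beta',\delta'\geq 1$ and $\beta'+\delta'=n-1\geq 6$. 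If $\beta',\delta'\geq 2$, then Proposition~\ref{intersecting graph t7} gives the conclusion; if one of $\beta',\delta'$ equals $1$, the other is $\geq 5\geq 3$, and Proposition~\ref{intersecting graph t5} gives the conclusion. \textbf{Case 3: $H$ is abelian.} Then $|H|$ is a product of powers of two or three primes with exponent sum $\geq 6$, and $|H|$ has at least two distinct prime divisors. If $H$ is cyclic, I invoke Proposition~\ref{intersecting graph t100}; if $H$ is non-cyclic, I invoke Proposition~\ref{intersecting graph t1}. In both of these the finite abelian groups whose intersection graph is planar, toroidal or projective-planar are listed explicitly, and every such group with two or more distinct prime divisors has exponent sum at most $5$; hence $H$ is not among them, so $\gamma(\mathscr{I}(H))>1$ and $\overline{\gamma}(\mathscr{I}(H))>1$, while the constructions in the proofs of those propositions (the complete subgraph on the proper subgroups containing a fixed subgroup of prime order) show $K_5\subseteq\mathscr{I}(H)$. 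Combining the three cases completes the induction.

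The reduction to a subgroup of prime index together with the genus-monotonicity remark makes the induction essentially bookkeeping, and Cases 1 and 2 are immediate quotations of the previous propositions. The only step needing genuine care is Case 3: Propositions~\ref{intersecting graph t100} and~\ref{intersecting graph t1} are stated as ``if and only if'' classifications rather than as ``large exponent sum implies non-embeddable'', so one must read off from their statements that no finite abelian group with two or more prime divisors and exponent sum $\geq 6$ is toroidal or projective-planar, and extract the $K_5$ from the subgraphs built in their proofs. I expect this small piece of casework, together with checking that Cases 1--3 are exhaustive, to be the only obstacle.
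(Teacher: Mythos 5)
Your argument is correct, but it follows a genuinely different route from the paper's. The paper exploits solvability through a Sylow basis: ordering the exponents so that $\alpha\geq\beta\geq\delta$, it takes the Hall $\{p,q\}$-subgroup $PQ$, observes that $\alpha+\beta\geq 5$ with $\alpha\geq 3$ (and $\beta\geq 2$ unless $\alpha\geq 5$), and then quotes the two-prime results (Theorem~\ref{1000}, Propositions~\ref{intersecting graph t100}, \ref{intersecting graph t1}, \ref{intersecting graph t5}, \ref{intersecting graph t7}) directly -- no induction is needed, because dropping an entire prime at once lands immediately in territory already classified. You instead descend to a normal subgroup $H$ of prime index (via $G/G'$), which only lowers the exponent sum by one and may leave all three primes in $|H|$; this is exactly what forces your induction on $\alpha+\beta+\delta$ with Proposition~\ref{intersection graph t13} as the base case, and the three-way split on whether $H$ is abelian or non-abelian and on how many primes survive. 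Both arguments are sound: your Cases 1 and 2 are correct quotations, and your Case 3 reading of the ``if and only if'' classifications is right, since every abelian group with at least two distinct prime divisors appearing in the planar/toroidal/projective-planar/$K_5$-free lists has exponent sum at most $5$ (the extremal case being $\mathbb Z_{p^4q}$), while your $H$ has exponent sum at least $6$; the $K_5$ claim for cyclic $H$ also follows cleanly from Theorem~\ref{l1}(2). The trade-off is that the paper's Hall-subgroup reduction is shorter and induction-free, whereas your prime-index reduction is more elementary in its group-theoretic input (needing only $G\neq G'$ rather than the existence of Sylow systems) at the cost of the extra bookkeeping.
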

\begin{proof}
Without loss of generality, we assume that $\alpha\geq \beta\geq \delta$. Since $G$ is solvable, $G$ has a Sylow basis
containing $P$, $Q$, $R$, where $P$, $Q$, $R$ are Sylow $p,q,r$-subgroups of $G$ respectively and so $PQ$ is a subgroup of $G$. By Theorem~\ref{1000}, Propositions~\ref{intersecting graph t100},~\ref{intersecting graph t1},~\ref{intersecting graph t5} and \ref{intersecting graph t7},
$\gamma(\mathscr{I}(PQ))>1$, $\overline{\gamma}(\mathscr{I}(PQ))>1$, $\mathscr{I}(PQ)$ contains $K_5$, and so $\gamma(\mathscr{I}(G))>1$, $\overline{\gamma}(\mathscr{I}(G))>1$ and $\mathscr{I}(G)$ contains $K_5$ as a subgraph.
\end{proof}

\begin{pro}\label{intersecting graph t9}
 If $G$ is a solvable group, whose order has more than three distinct prime factors, then $\gamma(\mathscr{I}(G))>1$, $\overline{\gamma}(\mathscr{I}(G))>1$ and $\mathscr{I}(G)$ contains $K_5$.
\end{pro}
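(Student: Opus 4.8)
The plan is to produce, uniformly and with no case analysis on the structure of the Sylow subgroups, a copy of the torus obstruction $\mathcal{A}_1$ of Figure~\ref{fig:f2} inside $\mathscr{I}(G)$. Since $\mathcal{A}_1$ contains $K_{3,5}$ and is itself an obstruction for the torus, the remark following Theorem~\ref{genus 501} then yields at once $\gamma(\mathscr{I}(G))>1$ and $\overline{\gamma}(\mathscr{I}(G))>1$; and because $K_5\subseteq K_6\subseteq\mathcal{A}_1$, it also shows that $\mathscr{I}(G)$ contains $K_5$.

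First I would set up notation: write $|G|=p_1^{\alpha_1}\cdots p_k^{\alpha_k}$ with $k\ge 4$ and, using solvability, fix a Sylow system $\{P_1,\dots,P_k\}$, so that each $P_i$ is a non-trivial Sylow $p_i$-subgroup, the $P_i$ are pairwise permutable, and every sub-product $\prod_{i\in S}P_i$ is a subgroup of $G$ of order $\prod_{i\in S}|P_i|$ --- exactly the machinery already used in the proof of Proposition~\ref{intersecting graph t10}. Using only the primes $p_1,p_2,p_3,p_4$, I would consider the eight subgroups
\[
P_1P_2,\quad P_1P_3,\quad P_1P_4,\quad P_1P_2P_3,\quad P_1P_2P_4,\quad P_1P_3P_4,\quad P_2P_3,\quad P_2P_4 .
\]
Each of these is a proper subgroup of $G$ (the prime $p_4$, respectively $p_3$, still divides $|G|$ because $k\ge 4$), and the eight are pairwise distinct since the sets of primes dividing their orders differ.

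Next I would read off the adjacencies in $\mathscr{I}(G)$. The first six subgroups all contain $P_1\neq\{e\}$, so they span a $K_6$. The subgroups $P_2P_3$ and $P_2P_4$ both contain $P_2\neq\{e\}$, hence are adjacent to one another, and each of them meets each of $P_1P_2$, $P_1P_2P_3$, $P_1P_2P_4$ non-trivially (they share at least $P_2$, and in fact $P_2P_3\le P_1P_2P_3$, $P_2P_4\le P_1P_2P_4$). Thus these eight subgroups induce the graph $\mathcal{A}_1$: a $K_6$ together with two further mutually adjacent vertices, each joined to the same triple $\{P_1P_2,P_1P_2P_3,P_1P_2P_4\}$ of $K_6$-vertices --- precisely the configuration that the paper repeatedly identifies with $\mathcal{A}_1$ (cf.\ the proofs of Propositions~\ref{intersecting graph t4} and~\ref{intersecting graph t10}). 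The conclusion is then immediate from the remark after Theorem~\ref{genus 501} together with Theorem~\ref{genus 110}. The only point requiring care --- the main (and really rather mild) obstacle --- is the bookkeeping of this intersection pattern: one must verify that $P_2P_3$ and $P_2P_4$ are attached to a \emph{common} triple of $K_6$-vertices and not to two different triples, since an $\mathcal{A}_1$-subgraph requires this; everything else follows at once from the Sylow-system structure.
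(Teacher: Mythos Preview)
Your proof is correct and follows essentially the same approach as the paper: both exploit a Sylow basis (available by solvability) to write down eight pairwise distinct proper Hall subgroups and verify that the induced subgraph contains the torus obstruction $\mathcal{A}_1$, which simultaneously forces $\gamma>1$, $\overline{\gamma}>1$, and the presence of $K_5$. The only difference is cosmetic: the paper uses the eight subgroups $P,\,PQ,\,PR,\,PS,\,PQR,\,PQS,\,PRS,\,QRS$, whereas you use $P_1P_2,\,P_1P_3,\,P_1P_4,\,P_1P_2P_3,\,P_1P_2P_4,\,P_1P_3P_4,\,P_2P_3,\,P_2P_4$ --- but the underlying idea and the verification of the $\mathcal{A}_1$ pattern are identical.
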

\begin{proof}
 Since $G$ is solvable, it has a Sylow basis containing $P$, $Q$, $R$, $S$, where $P$, $Q$, $R$, $S$ are Sylow $p$, $q$, $r$, $s$-subgroups of $G$ respectively. Then $P$, $PQ$,
$PR$, $PS$, $PQR$, $PQS$, $PRS$, $QRS$ are subgroups of $G$. Here $P$ is a subgroup of $PQ$,
$PR$, $PS$, $PQR$, $PQS$, $PRS$; $QRS$, $PR$, $PS$, $PQR$, $PQS$, $PRS$, $QRS$ intersect with each other nontrivially.
It follows that $\mathscr{I}(G)$ has a subgraph $\mathcal A_1$ as shown in Figure~\ref{fig:f2}, so  $\gamma(\mathscr{I}(G))>1$, $\overline{\gamma}(\mathscr{I}(G))>1$ and $\mathscr{I}(G)$ contains $K_5$.
\end{proof}

\subsection{Finite non-solvable groups}\label{sec:4b}
If $G$ is a group and $N$ is a normal subgroups of $G$, then $\mathscr{I}(G/N)$ is isomorphic (as a graph) to a subgraph of $\mathscr{I}(G)$.
It is well known that any non-solvable group has a simple group as a sub-quotient and every simple group has a minimal simple group as a sub-quotient.
Therefore, if we can show that the minimal simple groups have non-toroidal (non-projective-planar) intersection graphs,
then the intersection graph of a non-solvable group is non-toroidal (resp., non-projective-planar).

Recall that $SL_m(n)$ is the group of $m \times m$ matrices having determinant 1, whose entries are lie in a field with $n$ elements and that
$L_m(n)=SL_m(n)/ H$, where $H=\{kI| k^m=1 \}$. For any prime $q > 3$, the Suzuki group is denoted by $Sz(2^q)$

%
%

\begin{lemma}\label{intersecting graph l1}
If $n>2$, then $\gamma(\mathscr{I}(D_{4n}))>1$, $\overline{\gamma}(\mathscr{I}(D_{4n}))>1$ and $\mathscr{I}(G)$ contains $K_5$.
\end{lemma}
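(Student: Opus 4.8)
The plan is to work with the standard presentation $D_{4n}=\langle a,b\mid a^{2n}=b^{2}=1,\ bab^{-1}=a^{-1}\rangle$ and to exploit the central involution $z:=a^{n}$ (it is central since $ba^{n}b^{-1}=a^{-n}=a^{n}$), which generates a proper subgroup $Z:=\langle a^{n}\rangle$ of order $2$. First I would list all proper non-trivial subgroups of $D_{4n}$ that contain $Z$. Using the classical description of the subgroups of a dihedral group: for each divisor $d$ of $n$ the cyclic subgroup $\langle a^{d}\rangle$ (of order $2n/d$) contains $a^{n}$, which yields $\tau(n)$ subgroups; and for each divisor $d\ge 2$ of $n$ and each $i\in\{0,1,\dots,d-1\}$ the subgroup $\langle a^{d},a^{i}b\rangle$ (a dihedral group of order $4n/d$, a Klein four-group when $d=n$) contains $\langle a^{d}\rangle\ni a^{n}$, which yields $\sigma(n)-1$ further subgroups (the value $d=1$ is excluded because $\langle a,b\rangle=D_{4n}$ is not proper). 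These subgroups are pairwise distinct and all proper, and any two of them intersect in a subgroup containing $Z$, hence are adjacent in $\mathscr{I}(D_{4n})$. Therefore $\mathscr{I}(D_{4n})$ contains $K_{m}$ as a subgraph with $m=\tau(n)+\sigma(n)-1$.

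Next I would bound $m$ from below. Since $1$ and $n$ both divide $n$, we have $\sigma(n)\ge n+1$ and $\tau(n)\ge 2$, so $m\ge n+2$; in particular $m\ge 8$ whenever $n\ge 6$. If $n\ge 4$ is composite then $\tau(n)\ge 3$ and $\sigma(n)\ge n+3$ (include the smallest prime divisor of $n$), so $m\ge n+5\ge 9$. Hence $m\ge 8$, i.e.\ $\mathscr{I}(D_{4n})\supseteq K_{8}$, for every $n>2$ with $n\notin\{3,5\}$; for all such $n$, Theorem~\ref{genus 110} (and the remark following it) gives $\gamma(\mathscr{I}(D_{4n}))>1$ and $\overline{\gamma}(\mathscr{I}(D_{4n}))>1$, while $K_{5}\subseteq K_{8}\subseteq\mathscr{I}(D_{4n})$.

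It remains to treat $D_{12}$ and $D_{20}$. Both are non-abelian of order $2^{2}\cdot 3$, respectively $2^{2}\cdot 5$, that is of the form $p^{2}q$, so Proposition~\ref{intersecting graph t4} applies to them. Neither is isomorphic to any of the exceptional groups listed there: $D_{12}\cong S_{3}\times\mathbb Z_{2}$ has seven involutions, unlike the dicyclic group $\mathbb Z_{3}\rtimes\mathbb Z_{4}$, while $D_{20}\cong\mathbb Z_{5}\rtimes(\mathbb Z_{2}\times\mathbb Z_{2})$ has a Klein four Sylow $2$-subgroup, unlike the Frobenius group $\mathbb Z_{5}\rtimes_{2}\mathbb Z_{4}$, and neither equals $A_{4}$; also neither appears in Theorem~\ref{1000}. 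Consequently parts (1), (3) and (4) of Proposition~\ref{intersecting graph t4} give $\gamma(\mathscr{I}(G))>1$, $\overline{\gamma}(\mathscr{I}(G))>1$ and $K_{5}\subseteq\mathscr{I}(G)$ for $G\in\{D_{12},D_{20}\}$. (As an alternative self-contained check for $D_{20}$: the eight subgroups $\langle a\rangle$, $\langle a^{5}\rangle$, the five Klein four-subgroups $\langle a^{5},a^{i}b\rangle$, and $\langle a^{2},b\rangle$ induce $K_{8}$ with a single edge removed, which has $27>3\cdot 8$ edges and $27>3\cdot 8-3$ edges, hence is neither toroidal nor projective-planar.) Combining the two ranges of $n$ proves the lemma. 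The main obstacle is precisely this pair of small cases: for $n=3$ and $n=5$ the clique of subgroups through the centre is only $K_{5}$, respectively $K_{7}$, of genus $0$, respectively $1$, so a single clique no longer forces $\gamma>1$, and one must fall back on the finer $p^{2}q$ classification (or exhibit an explicit toroidal obstruction inside $\mathscr{I}(D_{12})$ and $\mathscr{I}(D_{20})$).
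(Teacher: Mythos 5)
Your proof is correct, but it takes a genuinely different route from the paper's. The paper argues uniformly in $n$: it writes down two explicit four-element families of proper subgroups of $D_{4n}$, each family sharing a common nontrivial subgroup, and observes that the resulting configuration contains the eight-vertex toroidal obstruction $\mathcal{A}_1$ of Theorem~\ref{genus 501} (which also contains $K_{3,5}$, killing projective-planarity), with no case distinction on $n$. You instead take the single clique of all proper subgroups through the central involution $\langle a^{n}\rangle$, compute its size as $\tau(n)+\sigma(n)-1$, and show this is at least $8$ for every $n>2$ except $n=3,5$, so that Theorem~\ref{genus 110} applied to $K_8$ finishes those cases; the two exceptions $D_{12}$ and $D_{20}$ are delegated to Proposition~\ref{intersecting graph t4}. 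That delegation is legitimate: the proposition is proved earlier and independently of this lemma, and combining its part (1) ($\mathscr{I}(G)$ contains $K_5$, hence is non-planar) with parts (3) and (4) (not toroidal, not projective-planar) does yield $\gamma>1$ and $\overline{\gamma}>1$ rather than merely $\gamma\neq 1$. Your arithmetic checks out ($\sigma(n)\geq n+1$ in general, the refinement for composite $n$, and the Euler counts $27>3\cdot 8$ and $27>3\cdot 8-3$ for the $K_8$ minus an edge inside $\mathscr{I}(D_{20})$). What your approach buys is a stronger conclusion ($K_8$) for most $n$ and transparency about why $n=3,5$ are the genuinely delicate cases, where the central clique is only $K_5$, respectively $K_7$; what the paper's buys is uniformity, avoiding any appeal to the $p^2q$ classification. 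Incidentally, your argument sidesteps a defect in the paper's own subgroup list, in which several of the stated generators (powers $a^{2n}$) are trivial under the paper's convention that $a$ has order $2n$ in $D_{4n}$.
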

\begin{proof}

Here $H_1:=\langle a \rangle$, $H_2:=\langle a^n, b\rangle$, $H_3:=\langle a^n, ba\rangle$, $H_4:=\langle a^n, ba^2\rangle$, $H_5:=\langle a^n\rangle$,
$H_6:=\langle a^{2n},b\rangle$, $H_7:=\langle a^{2n},ba\rangle$, $H_8:=\langle a^{2n},ba^2\rangle$, $H_9:=\langle a^{2n},ba^3\rangle$,
$H_{10}:=\langle a^{2n}\rangle$ are proper subgroups of $D_{4n}$. Also $H_5$ is a subgroup of $H_i$, $i=1$, 2, 3, 4; $H_{10}$ is a subgroup of $H_i$,
$i=6$, 7, 8, 9. It follows that $\mathscr{I}(D_{4n})$ has a subgraph $\mathcal A_1$ as shown in Figure~\ref{fig:f2}, so $\gamma(\mathscr{I}(D_{4n}))>1$,
$\overline{\gamma}(\mathscr{I}(D_{4n}))>1$.
\end{proof}

\begin{pro}\label{intersecting graph t11}
 If $G$ is a finite non-solvable group, then $\gamma(\mathscr{I}(G))>1$, $\overline{\gamma}(\mathscr{I}(G))>1$ and $\mathscr{I}(G)$ contains $K_5$.
\end{pro}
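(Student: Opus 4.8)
The plan is to reduce the non-solvable case to a small list of minimal simple groups via the sub-quotient observation made just before Lemma~\ref{intersecting graph l1}: since $\mathscr{I}(G/N)$ embeds as a subgraph of $\mathscr{I}(G)$, and every non-solvable group has a minimal simple group as a sub-quotient, it suffices to prove that for every minimal simple group $S$ we have $\gamma(\mathscr{I}(S))>1$, $\overline{\gamma}(\mathscr{I}(S))>1$ and $\mathscr{I}(S)$ contains $K_5$. By the Thompson classification of minimal simple groups, $S$ is one of $L_2(2^p)$ ($p$ prime), $L_2(3^p)$ ($p$ odd prime), $L_2(p)$ ($p>3$ prime with $p^2+1\equiv 0 \pmod 5$), $L_3(3)$, or $Sz(2^p)$ ($p$ odd prime). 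So the real work is to handle these five families.

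For each family I would exhibit inside $S$ an explicit configuration of eight proper subgroups realizing one of the toroidal obstructions $\mathcal{A}_1,\dots,\mathcal{A}_3$ or $\mathcal{B}_1,\dots,\mathcal{B}_4$ (preferably $\mathcal{A}_1$, since it contains $K_{3,5}$ and hence simultaneously forces $\gamma>1$, $\overline{\gamma}>1$, and the presence of $K_5$). The cleanest uniform device is the dihedral subgroups: $L_2(q)$ for $q\geq 5$ contains dihedral subgroups $D_{q-1}$ and $D_{q+1}$, one of which has order divisible by $8$ once $q$ is large enough, so Lemma~\ref{intersecting graph l1} applies directly to give a $D_{4n}$ subgroup with $n>2$ and hence $\mathscr{I}(D_{4n})$ — and therefore $\mathscr{I}(S)$ — contains $\mathcal{A}_1$ and $K_5$. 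This covers $L_2(2^p)$ for $p\geq 3$ (so $q=2^p\geq 8$), $L_2(3^p)$ for $p\geq 3$ (so $q\geq 27$), and $L_2(p)$ for all the relevant primes (the smallest is $p=7$, giving $D_8\leq L_2(7)$; in fact for $L_2(7)$ and $L_2(11)$ I would just note the $S_4$, resp.\ $A_5$, subgroups or argue directly via Sylow subgroups). For $L_2(4)\cong L_2(5)\cong A_5$ and $L_2(9)\cong A_6$, which are not covered by the dihedral argument, one checks directly: $A_5$ has $D_{10}$ and $A_4$ subgroups, and a direct listing of its subgroup-intersection structure produces $\mathcal{B}_1$ (two copies of $K_5$ sharing a vertex is already enough, as is used repeatedly in Section~\ref{sec:3}), while $A_6\supset A_5$ reduces to the previous case. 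For $L_3(3)$ one uses a parabolic subgroup (of order $p^2\cdot\text{stuff}$) or the $\mathrm{SL}_2(3)$-type subgroups to find an $\mathcal{A}_1$; and for $Sz(2^p)$ one uses the Frobenius structure — $Sz(2^p)$ has a Borel subgroup which is a Frobenius group with kernel of order $2^{2p}$ and complement cyclic of order $2^p-1$, whose $p$-subgroup already has many subgroups all containing the center, yielding $K_8$ once $2^p\geq 8$, i.e.\ always since $p\geq 5$.

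The main obstacle I expect is not any single family but the bookkeeping for the smallest groups — $A_5$, $A_6$, $L_2(7)$, $L_2(11)$, $L_3(3)$ — where the dihedral/$p$-group shortcuts are tightest and one genuinely has to pin down enough of the subgroup lattice to locate an obstruction; these are finite checks but need care that the eight chosen subgroups are pairwise distinct and that the claimed containments and non-trivial intersections hold. A secondary point to get right is the sub-quotient step itself: one must be sure that if $H\leq G$ then $\mathscr{I}(H)$ is (isomorphic to) a subgraph of $\mathscr{I}(G)$ — proper subgroups of $H$ are proper subgroups of $G$ and intersections are computed the same way — and combine this with the quotient observation so that a minimal simple sub-quotient of $G$ always contributes its intersection graph as a subgraph of $\mathscr{I}(G)$; then $\gamma(\mathscr{I}(G))\geq \gamma(\mathscr{I}(S))>1$, $\overline{\gamma}(\mathscr{I}(G))\geq\overline{\gamma}(\mathscr{I}(S))>1$, and $K_5\leq\mathscr{I}(S)\leq\mathscr{I}(G)$, which is exactly the assertion.
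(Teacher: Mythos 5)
Your overall architecture is exactly the paper's: reduce to minimal simple sub-quotients (using that both subgroups and quotients of $G$ contribute subgraphs of $\mathscr{I}(G)$), invoke Thompson's list, and then locate an obstruction in each family. However, there is a concrete gap in your ``uniform dihedral device'' that leaves one entire Thompson family uncovered. For $q$ even, the dihedral subgroups of $L_2(q)$ have order $2(q-1)$ and $2(q+1)$, and since $q\pm 1$ is odd these orders are congruent to $2\pmod 4$; they are never of the form $4n$, so Lemma~\ref{intersecting graph l1} (which is stated only for $D_{4n}$ with $n>2$, and whose proof genuinely uses the chain $\langle a^{2n}\rangle\le\langle a^n\rangle\le\langle a\rangle$ of index-two steps) cannot be applied to any dihedral subgroup of $L_2(2^p)$. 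Thus your argument says nothing about $L_2(8)$, $L_2(32)$, $L_2(128),\dots$, i.e.\ the whole family $L_2(2^p)$ with $p$ an odd prime. The paper handles this family differently: it uses the unipotent subgroup $\bigl\{\overline{\bigl(\begin{smallmatrix}1&a\\0&1\end{smallmatrix}\bigr)}: a\in\mathbb F_{q^p}\bigr\}\cong(\mathbb Z_q)^p$ of the Borel and then quotes Proposition~\ref{intersecting graph t1} (Case 8, elementary abelian rank $\ge 3$, which already contains $\mathcal B_1$). Your treatment of $Sz(2^p)$ via the Sylow $2$-subgroup is essentially this same elementary-abelian reduction (the center of the Suzuki $2$-group is $(\mathbb Z_2)^p$), so the fix is available to you, but as written the $L_2(2^p)$ case fails.

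Two smaller points. First, even for odd $q$ your quantifier is off: Lemma~\ref{intersecting graph l1} needs order divisible by $4$ and exceeding $8$, not divisible by $8$; exactly one of $q\pm1$ is divisible by $4$, and you must check that the corresponding $D_{4n}$ has $n>2$ (this is why the paper treats $p=5$ and $p=7$ separately, via $A_5$ and the maximal $S_4\le L_2(7)$ respectively, and why your proposed ``$D_8\le L_2(7)$'' does not suffice, since $D_8=D_{4\cdot 2}$ has $n=2$). Second, your worry about $A_6$ is moot: $L_2(9)$ is not minimal simple and does not appear in Thompson's list. With the $L_2(2^p)$ family repaired by the elementary-abelian argument, the rest of your plan matches the paper's proof case for case.
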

 \begin{proof} The classification of minimal simple groups is given in \cite[Corollary 1]{thomp}.
As mentioned above, to prove the result, it is enough to show that these minimal simple groups are non-toroidal, not projective-planar and contains $K_5$.
 Now we investigate each of these groups. Her we will denote the image of a matrix $A$ in $L_m(n)$ by $\overline{A}$.

\noindent\textbf{Case 1:}  $G \cong L_2(q^p)$, where $q=2,3$ and $p$ is any prime.
If $p=2$, then the only non-solvable group is $ L_2(4)$. Also $ L_2(4) \cong A_5$ (see \cite{atlas}). $A_5$ has five copies of $A_4$ and any two $A_4$ in $A_5$ have
non-trivial intersection, for otherwise $|A_4A_4|=144$, which is not possible. Also $H_1:=\langle (12, 34)\rangle$, $H_2:=\langle (12,34), (13, 24)\rangle$,
$H_3:=\langle (12,34), (12354)\rangle$, $H_4:=\langle (12,34), (12453)\rangle$, $H_5:=\langle (12,34),(345)\rangle$ are proper subgroups of $A_5$. Here
$H_1$ is a subgroup of $H_i$, for every $i=1$, 2, 3, 4. It follows that $\mathscr{I}(A_5)$ has a subgraph $\mathcal B_1$ as in Figure~\ref{fig:int f3}, so
$\gamma(\mathscr{I}(G))>1$ and $\overline{\gamma}(\mathscr{I}(G))>1$.


If $p> 2$, then $ L_2(q^p)$ contains a subgroup isomorphic to $(\mathbb Z_q)^p$,
 namely the subgroup of matrices of the form $\overline{
\bigl( \begin{smallmatrix}
1 & a\\
0 & 1
\end{smallmatrix}\bigr)
}$ with $a \in \mathbb F_{q^p}$.
By Proposition~\ref{intersecting graph t1}, $\gamma(\mathscr{I}((\mathbb Z_q)^p ))>1$, and $\overline{\gamma}(\mathscr{I}((\mathbb Z_q)^p ))>1$, $p >2$
and $\mathscr{I}((\mathbb Z_q)^p )$ contains $K_5$.

\noindent\textbf{Case 2:} $G \cong  L_3(3)$. In $SL_3(3)$ the only matrix in the subgroup $H$ is the identity matrix, so $L_3(3) \cong SL_3(3)$.
Let us consider the subgroup consisting of matrices of the form
$\left(\begin{smallmatrix}
1 & a & b\\
0 & 1 & c\\
0 & 0 & 1
\end{smallmatrix}\right)$
with $a, b, c \in \mathbb F_3$. This subgroup is isomorphic to the group
$(\mathbb Z_p \times \mathbb Z_p) \rtimes \mathbb Z_p$ with $p=3$.
 By Proposition~\ref{intersecting graph t2}, $\gamma(\mathscr{I}((\mathbb Z_p \times \mathbb Z_p) \rtimes \mathbb Z_p))>1$,
$\overline{\gamma}(\mathscr{I}((\mathbb Z_p \times \mathbb Z_p) \rtimes \mathbb Z_p))>1$ and
$\mathscr{I}((\mathbb Z_p \times \mathbb Z_p) \rtimes \mathbb Z_p)$ contains $K_5$.

\noindent\textbf{Case 3:} $G \cong L_2(p)$, where $p$ is any prime exceeding $3$ such that $p^2  +1 \equiv 0~(\text{mod}~ 5)$. We have to consider two subcases:

\noindent\textbf{Subcase 3a:} $p \equiv 1  ~(\text{mod}~ 4)$. It is shown in \cite[p.~222]{boh-reid} that  $L_2(p)$
has a  subgroup isomorphic to $D_{p-1}$. So by Lemma~\ref{intersecting graph l1}, $\gamma(\mathscr{I}(D_{p-1}))>1$,
$\overline{\gamma}(\mathscr{I}(D_{p-1}))>1$ and
$\mathscr{I}(D_{p-1})$ contains $K_5$.

If $p=5$, then $L_2(5) \cong A_5 \cong L_2(4)$. By Case 1,
$\gamma(\mathscr{I}(A_5))>1$, $\overline{\gamma}(\mathscr{I}(A_5))>1$ and $\mathscr{I}(A_5)$ contains $K_5$.

\noindent\textbf{Subcase 3b:} $p \equiv 3 ~(\text{mod}~ 4)$.  $L_2(p)$
has  a subgroup isomorphic to $D_{p+1}$\cite[p.~222]{boh-reid}.  By Lemma~\ref{intersecting graph l1}, $\gamma(\mathscr{I}(D_{p+1}))>1$,
$\overline{\gamma}(\mathscr{I}(D_{p+1}))>1$ and
$\mathscr{I}(D_{p+1})$ contains $K_5$.
If $p=7$, then $S_4$ is a maximal subgroup of $L_2(7)$\cite{atlas}. Also by Theorem~\ref{intersecting graph t5}, $\gamma(\mathscr{I}(S_4))>1$,
$\overline{\gamma}(\mathscr{I}(S_4))>1$ and
$\mathscr{I}(S_4)$ contains $K_5$.


\noindent\textbf{Case 4:}  $G \cong Sz(2^q)$, where $q$ is any odd prime. Then  $ Sz(2^q)$ has a subgroup isomorphic to
$(\mathbb Z_2)^q, q \geq 3$ \cite[p.~466]{goren}. But by Proposition~\ref{intersecting graph t1}, $\gamma(\mathscr{I}((\mathbb Z_2)^q))>1$,
$\overline{\gamma}(\mathscr{I}((\mathbb Z_2)^q))>1$, $q \geq 3$ and
$\mathscr{I}((\mathbb Z_2)^q)$ contains $K_5$.

Combining all these cases together, the proof follows.
\end{proof}

\section{Main results}\label{sec: 5}
By combining all the results obtained in Sections~\ref{sec:3} and \ref{sec:4} above, we have the following general main results,
which classifies the finite groups whose intersection graphs of subgroups are toroidal or projective-planar, and classifies the finite non-cyclic groups whose intersection graphs of subgroups are one of $K_5$ free, $C_3$-free, acyclic or bipartite.

\begin{thm}\label{intersecting graph t12}
 Let $G$ be a finite group and $p$, $q$, $r$ be distinct primes. Then
\begin{enumerate}[{\normalfont (1)}]
\item $\mathscr{I}(G)$ is toroidal if and only if $G$ is isomorphic to one of the following:
\begin{enumerate}[\normalfont (a)]
\item $\mathbb Z_{p^\alpha} (\alpha=6,7,8)$, $\mathbb Z_{p^\alpha q}(\alpha=3,4)$, $\mathbb Z_{p^2q^2}$, $\mathbb Z_{p^2qr}$, $\mathbb Z_9\times \mathbb Z_3$, $\mathbb Z_{25}\times \mathbb Z_5$, $\mathbb Z_{3q}\times \mathbb Z_3$, $M_{p^3} (p= 3, 5)$, $M_{16}$, $\mathbb Z_3\rtimes \mathbb Z_4$, $\mathbb Z_5\rtimes \mathbb Z_4$, $\mathbb Z_9\rtimes \mathbb Z_2$, $\mathbb Z_{25}\rtimes \mathbb Z_2$;
\item $\langle a, b, c~|~ a^5=b^5=c^6=1, ab=ba, cac^{-1}=b,
cbc^{-1}= ab^{l} \rangle$, where $\bigl(\begin{smallmatrix}
  0 & -1\\ 1 & l
\end{smallmatrix} \bigr)$ has order $6$ in $GL_2(5)$.
\end{enumerate}
\item $\mathscr{I}(G)$ is projective-planar if and only if $G$ is isomorphic to one of
 $\mathbb Z_{p^\alpha}$$(\alpha=6, 7)$, $\mathbb Z_{p^3q}$, $\mathbb Z_9\times \mathbb Z_3$, $\mathbb Z_{3q}\times \mathbb Z_3$, $M_{27}$, $\mathbb Z_3\rtimes \mathbb Z_4$, $\mathbb Z_9\rtimes \mathbb Z_2$.
\end{enumerate}
\end{thm}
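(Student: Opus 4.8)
The plan is to obtain the two classifications above as a direct synthesis of the case analyses already carried out in Sections~\ref{sec:3} and \ref{sec:4}. Every finite group $G$ lies in exactly one of the families treated there, so it suffices to run through these families and collect those $G$ for which $\gamma(\mathscr{I}(G))=1$ (respectively $\overline{\gamma}(\mathscr{I}(G))=1$); the converse direction is then automatic, since for every group not appearing in the resulting lists one of the cited results already shows $\gamma(\mathscr{I}(G))>1$ (resp. $\overline{\gamma}(\mathscr{I}(G))>1$). First I would set aside the groups whose intersection graph of subgroups is already known to be planar: by Theorem~\ref{1000}, together with the explicit descriptions of $\mathscr{I}(G)$ recorded just after it, such $G$ are neither toroidal nor projective-planar and contribute nothing to either list. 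In particular the degenerate orders $1$, $p$, $p^2$, $pq$ require no further attention.

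Next I would partition the remaining groups and quote the relevant result in each case. If $G$ is cyclic, Proposition~\ref{intersecting graph t100} records exactly which $\mathbb{Z}_n$ are toroidal and which are projective-planar; if $G$ is non-cyclic abelian, Proposition~\ref{intersecting graph t1} does the same. If $G$ is non-abelian solvable, I would sub-divide by the number $k$ of distinct prime divisors of $|G|$: for $k=1$ apply Proposition~\ref{intersecting graph t2}; for $k=2$ apply Proposition~\ref{intersecting graph t4} when $|G|=p^2q$, Proposition~\ref{intersecting graph t5} when $|G|=p^\alpha q$ with $\alpha\geq 3$, Proposition~\ref{intersecting graph t6} when $|G|=p^2q^2$, and Proposition~\ref{intersecting graph t7} when $|G|=p^\alpha q^\beta$ with $\alpha,\beta\geq 2$ (the remaining case $|G|=pq$ being planar); for $k=3$ apply, according as $|G|$ equals $pqr$, $p^2qr$, $p^3qr$, $p^2q^2r$, a product whose exponents sum to $6$, or a product whose exponents sum to at least $7$, Propositions~\ref{intersecting graph t8}, \ref{intersecting graph t10}, \ref{intersection graph t11}, \ref{intersection graph t12}, \ref{intersection graph t13} and \ref{intersection graph t14}, respectively; and for $k\geq 4$ apply Proposition~\ref{intersecting graph t9}. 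Finally, if $G$ is non-solvable, Proposition~\ref{intersecting graph t11} applies. Of all these results, only Propositions~\ref{intersecting graph t100}, \ref{intersecting graph t1}, \ref{intersecting graph t2}, \ref{intersecting graph t4} and \ref{intersecting graph t10} ever yield a group with $\gamma(\mathscr{I}(G))=1$ or $\overline{\gamma}(\mathscr{I}(G))=1$; all the others give $\gamma(\mathscr{I}(G))>1$ and $\overline{\gamma}(\mathscr{I}(G))>1$ uniformly. Collecting the positive instances from those five propositions then assembles precisely the two lists in the statement.

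Since the substance of the argument has already been carried out in the propositions, the remaining work is pure bookkeeping, and the only point that needs real care is verifying that the partition above is both exhaustive and non-overlapping. For $p$-groups, for abelian groups, and for non-solvable groups this is immediate; the delicate part --- and the step I expect to demand the most attention --- is confirming that every non-abelian solvable group whose order has at most three distinct prime factors matches exactly one of the order-types handled in Subsection~\ref{sec:4a}, in particular keeping the three-prime sub-cases ($pqr$; $p^2qr$; $p^3qr$; $p^2q^2r$; exponent sum $6$; exponent sum at least $7$) mutually disjoint and jointly complete. Once this is checked, the theorem follows by simply reading off and merging the toroidal (resp. projective-planar) verdicts from the five productive propositions, and no new graph-embedding computation is required.
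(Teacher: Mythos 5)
Your proposal coincides with the paper's own argument: the paper proves this theorem by the single remark that it follows ``by combining all the results obtained in Sections~\ref{sec:3} and \ref{sec:4}'', and your partition into cyclic, non-cyclic abelian, non-abelian solvable (subdivided by the prime factorization type of $|G|$) and non-solvable groups, with the positive instances harvested from Propositions~\ref{intersecting graph t100}, \ref{intersecting graph t1}, \ref{intersecting graph t2}, \ref{intersecting graph t4} and \ref{intersecting graph t10}, is exactly that synthesis. The one place where faithfully ``reading off'' the propositions does not reproduce the stated list verbatim is the entry $\mathbb Z_{p^2qr}$ in part (1)(a), which Case~4 of the proof of Proposition~\ref{intersecting graph t100} actually declares non-toroidal --- an internal discrepancy of the paper rather than a defect of your argument, but one you should flag rather than assert that the collection ``assembles precisely'' the published lists.
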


\begin{thm}\label{intersecting graph t233}
 Let $G$ be a finite non-cyclic group and $p$, $q$, $r$ be distinct primes. Then
\begin{enumerate}[{\normalfont (1)}]
\item $\mathscr{I}(G)$ is $K_5$-free if and only if $G$ is isomorphic to one of the following groups:
\begin{enumerate}[{\normalfont(a)}]
\item $\mathbb Z_p \times \mathbb Z_p$, $\mathbb Z_4 \times \mathbb Z_2$, $\mathbb Z_6\times \mathbb Z_2$, $Q_8$, $M_8$, $\mathbb Z_q \rtimes \mathbb Z_p$, $\mathbb Z_q \rtimes_2 \mathbb Z_{p^2}$, $A_4$;
\item $\langle a, b, c~|~ a^p=b^p=c^q=1, ab=ba, cac^{-1}=b,
cbc^{-1}= a^1b^{l} \rangle$, where $\bigl(\begin{smallmatrix}
  0 & -1\\ 1 & l
\end{smallmatrix} \bigr)$ has order $q$ in $GL_2(p)$,  $q|(p+1)$;
\item $\langle a,b,c~|~a^p=b^p=c^{q^2}=1, ab=ba, cac^{-1}=b^{-1}, cbc^{-1}=a^1b^l\rangle$, where
 $\bigl(\begin{smallmatrix}
  0 & -1\\ 1 & l
\end{smallmatrix} \bigr)$ has order $q^2$ in $GL_2(p)$,  $q^2|(p+1)$;
\item $\langle a,b,c~|~a^p=b^q=c^r=1, b^{-1}ab=a^\mu, c^{-1}ac=a^v, bc=cb\rangle$, where $r$, $q$ are divisor of $p$ and $v$, $\mu\neq 1$;
\item $\langle a, b, c~|~ a^p=b^p=c^{qr}=1, ab=ba, cac^{-1}=b,
cbc^{-1}= ab^{l} \rangle$, where $\bigl(\begin{smallmatrix}
  0 & -1\\ 1 & l
\end{smallmatrix} \bigr)$ has order $qr$ in $GL_2(p)$, $qr|(p+1)$.
\end{enumerate}
\item The following are equivalent:
\begin{enumerate}[{\normalfont(a)}]
\item $G$ is isomorphic to one of the following:
$\mathbb Z_{p^\alpha}(\alpha=2,3)$, $\mathbb Z_{pq}$, $\mathbb Z_p \times \mathbb Z_p$, $\mathbb Z_q \rtimes \mathbb Z_p$, $A_4$ or $\langle a, b, c~|~ a^p=b^p=c^q=1, ab=ba, cac^{-1}=b,
cbc^{-1}= a^1b^{l} \rangle$, where $\bigl(\begin{smallmatrix}
  0 & -1\\ 1 & l
\end{smallmatrix} \bigr)$ has order $q$ in $GL_2(p)$, $q|(p+1)$;
\item $\mathscr{I}(G)$ is $C_3$-free;
\item $\mathscr{I}(G)$ is acyclic;
\item $\mathscr{I}(G)$ is bipartite.
\end{enumerate}
\end{enumerate}
\end{thm}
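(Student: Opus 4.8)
The statement is a synthesis of the case analysis carried out throughout Sections~\ref{sec:3} and~\ref{sec:4}. Every finite group $G$ has order of one of the shapes $p^{\alpha}$, $p^{\alpha}q^{\beta}$, $p^{\alpha}q^{\beta}r^{\delta}$, or a multiple of at least four distinct primes, and $G$ is either solvable or not; the plan is to run through these possibilities, quote the appropriate earlier result, and assemble the two lists.

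For part~(1), I would first eliminate all but finitely many shapes of $|G|$. By Propositions~\ref{intersecting graph t5} (order $p^{\alpha}q$, $\alpha\ge 3$) and~\ref{intersecting graph t7} (order $p^{\alpha}q^{\beta}$, $\alpha,\beta\ge 2$, $\alpha+\beta\ge 5$), by Propositions~\ref{intersection graph t11},~\ref{intersection graph t12},~\ref{intersection graph t13},~\ref{intersection graph t14} (orders with three distinct primes of total multiplicity at least $5$), by Proposition~\ref{intersecting graph t9} (at least four distinct primes), and by Proposition~\ref{intersecting graph t11} (the non-solvable case), $\mathscr{I}(G)$ contains $K_{5}$ in all these situations. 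Hence a $K_{5}$-free group has order $p^{\alpha}$ ($\alpha\le 5$), $p^{\alpha}q$ ($\alpha\le 2$), $p^{2}q^{2}$, $pqr$, or $p^{2}qr$. On these finitely many shapes I would invoke \eqref{intersection graph e1} together with Propositions~\ref{intersecting graph t1}(1) and~\ref{intersecting graph t2}(1) for $p$-groups; Theorem~\ref{l1}, Proposition~\ref{intersecting graph t1}(1) and Proposition~\ref{intersecting graph t4}(1) for order $p^{\alpha}q$; Proposition~\ref{intersecting graph t6} for order $p^{2}q^{2}$; Theorem~\ref{l1}(1)--(2) and Proposition~\ref{intersecting graph t8}(2) for order $pqr$; and Proposition~\ref{intersecting graph t10}(2) for order $p^{2}qr$. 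Discarding the cyclic groups thus produced (excluded by hypothesis) and collecting the survivors yields exactly the list (a)--(e).

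For part~(2), I would begin from the trivial observations that an acyclic graph is a forest, hence bipartite, and that a bipartite graph is $C_{3}$-free; so it suffices to establish (i) if $\mathscr{I}(G)$ is $C_{3}$-free then $G$ lies in list~(2)(a), and (ii) if $G$ lies in list~(2)(a) then $\mathscr{I}(G)$ is acyclic, whereupon the cycle of implications $C_{3}\text{-free}\Rightarrow(2)(a)\Rightarrow\text{acyclic}\Rightarrow\text{bipartite}\Rightarrow C_{3}\text{-free}$ closes and all four conditions become equivalent. For (i), since $C_{3}$ is a subgraph of $K_{5}$, every shape of $|G|$ eliminated in part~(1), together with the orders on which $\mathscr{I}(G)$ is shown to contain $C_{3}$ (Propositions~\ref{intersecting graph t2}(2),~\ref{intersecting graph t6},~\ref{intersecting graph t8}(1),~\ref{intersecting graph t10}(1), and the relevant abelian subcases of Proposition~\ref{intersecting graph t1}), is excluded, leaving only the orders $p^{2}$, $p^{3}$, $pq$, $p^{2}q$; on these the $C_{3}$-free groups are read off from Theorem~\ref{l1}, \eqref{intersection graph e1}, \eqref{intersection graph eqn2}, \eqref{e4}, \eqref{e8}, \eqref{e10}, \eqref{e11} and Proposition~\ref{intersecting graph t4}(2), producing precisely list~(2)(a). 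For (ii), one checks directly that for every group in~(2)(a) the graph $\mathscr{I}(G)$ — namely $K_{1}$, $K_{2}$, $\overline{K}_{2}$, $\overline{K}_{p+1}$, $\overline{K}_{q+1}$, $K_{1,3}\cup\overline{K}_{4}$, or $K_{1,p+1}\cup\overline{K}_{p^{2}}$ — is a forest, hence acyclic.

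The real difficulty lies not in the present theorem but in having made the bookkeeping airtight beforehand: confirming that the shapes of $|G|$ treated by the propositions of Sections~\ref{sec:3} and~\ref{sec:4} exhaust all finite groups with no gaps, and, on the small borderline orders $p^{3}$, $p^{2}q$, $p^{2}q^{2}$, $pqr$, $p^{2}qr$, correctly matching the defining relations of the semidirect-product families $\mathcal{G}_{1}$, $\mathcal{G}_{2}$, $\mathcal{G}_{3}$ (and of the order-$p^{2}qr$ family) against those groups whose intersection graph is $K_{5}$-free (resp.\ acyclic). Once that verification is in place, the theorem follows at once by assembling the cited classifications.
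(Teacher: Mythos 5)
Your proposal is correct and follows essentially the same route as the paper, which states Theorem~\ref{intersecting graph t233} without a separate proof precisely as the assembly "by combining all the results obtained in Sections~\ref{sec:3} and \ref{sec:4}"; your citations of the order-shape eliminations (Propositions~\ref{intersecting graph t5}, \ref{intersecting graph t7}, \ref{intersection graph t11}--\ref{intersection graph t14}, \ref{intersecting graph t9}, and the non-solvable case) and of the remaining small-order classifications match the paper's intended bookkeeping, and your closed cycle of implications for part~(2) together with the explicit forest check of the graphs in list~(2)(a) is exactly the argument the paper relies on. No gaps.
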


\begin{cor}\label{intersecting graph c1}
 Let $G$ be a finite group and $p,q,r$ are distinct primes. Then
 \begin{enumerate}[{\normalfont(1)}]
%
%
\item $\mathscr{I}(G)$ is unicyclic if and only if $G$ is either $\mathbb Z_{p^4}$ or $\mathbb Z_{p^2q}$;
\item $\mathscr{I}(G)\cong C_n$ if and only if $n=3$ and $G \cong \mathbb Z_{p^4}$;
\item $\mathscr{I}(G)\cong P_n$ if and only if $n=1$ and $G \cong \mathbb Z_{p^3}$;

\item $\mathscr{I}(G)$ is $C_5$-free if and only if $G$ is one of $\mathbb Z_{p^\alpha} (\alpha=2,3,4,5)$, $\mathbb Z_{p^\alpha q} (\alpha=1,2)$,
$\mathbb Z_p\times \mathbb Z_p$, $\mathbb Z_4\times \mathbb Z_2$, $Q_8$, $\mathbb Z_q\rtimes \mathbb Z_p$, $\mathbb Z_q\rtimes_2 \mathbb Z_{p^2}$, $A_4$,
$\langle a, b, c~|~ a^p=b^p=c^q=1, ab=ba, cac^{-1}=b,
cbc^{-1}= a^1b^{l} \rangle$, where $\bigl(\begin{smallmatrix}
  0 & -1\\ 1 & l
\end{smallmatrix} \bigr)$ has order $q$ in $GL_2(p)$, $q~|~(p+1)$ or $\langle a,b,c~|~a^p=b^p=c^{q^2}=1, ab=ba, cac^{-1}=b^{-1}, cbc^{-1}=a^1b^l\rangle$,
 where $\bigl(\begin{smallmatrix}
  0 & -1\\ 1 & l
\end{smallmatrix} \bigr)$ has order $q^2$ in $GL_2(p)$, $q^2|(p+1)$;
\item $\mathscr{I}(G)$ is $C_4$-free if and only if $G$ is one of $\mathbb Z_{p^\alpha}(\alpha=2,3,4)$, $\mathbb Z_{p^\alpha q}(\alpha= 1,2)$,
$\mathbb Z_p \times \mathbb Z_p$, $\mathbb Z_q \rtimes \mathbb Z_p$, $\mathbb Z_q \rtimes_2 \mathbb Z_{p^2}$, $A_4$ or
$\langle a, b, c~|~ a^p=b^p=c^q=1, ab=ba, cac^{-1}=b,
cbc^{-1}= a^1b^{l} \rangle$, where $\bigl(\begin{smallmatrix}
  0 & -1\\ 1 & l
\end{smallmatrix} \bigr)$ has order $q$ in $GL_2(p)$, $q|(p+1)$;

\item $\mathscr{I}(G)$ is $P_4$-free if and only if $G$ is one of $\mathbb Z_{p^\alpha}(\alpha=2,3,4,5)$, $\mathbb Z_{p^\alpha q}(\alpha= 1,2)$,
$Q_8$, $\mathbb Z_p \times \mathbb Z_p$, $\mathbb Z_q \rtimes \mathbb Z_p$ or
$\langle a, b, c~|~ a^p=b^p=c^q=1, ab=ba, cac^{-1}=b,
cbc^{-1}= a^1b^{l} \rangle$, where $\bigl(\begin{smallmatrix}
  0 & -1\\ 1 & l
\end{smallmatrix} \bigr)$ has order $q$ in $GL_2(p)$, $q|(p+1)$ or $A_4$;
\item $\mathscr{I}(G)$ is $P_3$-free if and only if $G$ is one of $\mathbb Z_{p^\alpha}(\alpha=2,3,4)$, $\mathbb Z_{pq}$,
$\mathbb Z_p \times \mathbb Z_p$, $\mathbb Z_q \rtimes \mathbb Z_p$, $A_4$ or
$\langle a, b, c~|~ a^p=b^p=c^q=1, ab=ba, cac^{-1}=b,
cbc^{-1}= a^1b^{l} \rangle$, where $\bigl(\begin{smallmatrix}
  0 & -1\\ 1 & l
\end{smallmatrix} \bigr)$ has order $q$ in $GL_2(p)$, $q|(p+1)$;
\item $\mathscr{I}(G)$ is $P_2$-free if and only if $G$ is one of $\mathbb Z_{p^\alpha}(\alpha=2,3)$, $\mathbb Z_{pq}$,
$\mathbb Z_p \times \mathbb Z_p$ or $\mathbb Z_q \rtimes \mathbb Z_p$;
\item $\mathscr{I}(G)$ is totally disconnected if and only if $G$ is isomorphic to one of $\mathbb Z_{p^2}$, $\mathbb Z_{pq}$, $\mathbb Z_p\times \mathbb Z_p$ or $\mathbb Z_q\rtimes \mathbb Z_p$;
\item $\mathscr{I}(G)$ is $K_{2,3}$-free if and only if $G$ is one of $\mathbb Z_{p^\alpha}(\alpha=2,3,4,5)$, $\mathbb Z_{p^\alpha q}(\alpha=1,2)$,
$\mathbb Z_{pqr}$,
$\mathbb Z_p\times \mathbb Z_p$, $\mathbb Z_4\times \mathbb Z_2$, $Q_8$, $\mathbb Z_q\rtimes Z_p$, $\mathbb Z_q\rtimes_2 \mathbb Z_{p^2}$, $A_4$ or
$\langle a, b, c~|~ a^p=b^p=c^q=1, ab=ba, cac^{-1}=b,
cbc^{-1}= a^1b^{l} \rangle$, where $\bigl(\begin{smallmatrix}
  0 & -1\\ 1 & l
\end{smallmatrix} \bigr)$ has order $q$ in $GL_2(p)$, $q|(p+1)$;
\item $\mathscr{I}(G)$ is $K_4$-free if and only if $G$ is one of
 $\mathbb Z_{p^\alpha} (\alpha=2,3,4)$, $\mathbb Z_{p^\alpha q} (\alpha=1,2)$, $\mathbb Z_{pqr}$,
$\mathbb Z_p\times \mathbb Z_p$, $\mathbb Z_q\rtimes_2 \mathbb Z_p$, $\mathbb Z_q\rtimes \mathbb Z_{p^2}$,  $A_4$ or
$\langle a, b, c~|~ a^p=b^p=c^q=1, ab=ba, cac^{-1}=b,
cbc^{-1}= a^1b^{l} \rangle$, where $\bigl(\begin{smallmatrix}
  0 & -1\\ 1 & l
\end{smallmatrix} \bigr)$ has order $q$ in $GL_2(p)$, $q|(p+1)$ ;
\item $\mathscr{I}(G)$ is $K_{1,4}$-free if and only if $G$ is one of $\mathbb Z_{p^\alpha} (\alpha=2,3,4,5)$, $\mathbb Z_{p^\alpha q} (\alpha=1,2)$,
$\mathbb Z_p\times \mathbb Z_p$, $Q_8$, $\mathbb Z_q\rtimes \mathbb Z_p$ or $A_4$;
\item $\mathscr{I}(G)$ is claw-free if and only if $G$ is one of $\mathbb Z_{p^\alpha}(\alpha=2,3,4)$, $\mathbb Z_{pq}$,
$\mathbb Z_p \times \mathbb Z_p$ or $\mathbb Z_q \rtimes \mathbb Z_p$;
\item The following are equivalent:
\begin{enumerate}[{\normalfont(a)}]
\item $G\cong \mathbb Z_{p^\alpha}(\alpha=2,3)$;
\item $\mathscr{I}(G)$ is a tree;
\item $\mathscr{I}(G)$ is a star graph;
\item $\mathscr{I}(G)$ is complete bipartite.
\end{enumerate}
\item $girth(\mathscr{I}(G))$ is $\infty$, if $G$ is one of $\mathbb Z_{p^\alpha}(\alpha=2,3)$, $\mathbb Z_{pq}$, $\mathbb Z_p \times \mathbb Z_p$, $\mathbb Z_q \rtimes \mathbb Z_p$, $A_4$ or $\langle a, b, c~|~ a^p=b^p=c^q=1, ab=ba, cac^{-1}=b,
cbc^{-1}= a^1b^{l} \rangle$, where $\bigl(\begin{smallmatrix}
  0 & -1\\ 1 & l
\end{smallmatrix} \bigr)$ has order $q$ in $GL_2(p)$, $q|(p+1)$;
otherwise $girth(\mathscr{I}(G))=3$.
\end{enumerate}
\end{cor}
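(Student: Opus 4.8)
The plan is to derive every assertion of the corollary as a bookkeeping consequence of the explicit structural information already assembled: the planar classification (Theorem~\ref{1000}) together with the explicit descriptions of the intersection graphs in \eqref{intersection graph e1}--\eqref{33} and Figure~\ref{int f1}, and the toroidal/projective-planar classification (Theorem~\ref{intersecting graph t12}). The key observation organizing the whole argument is that a graph $X$ with a bounded number of vertices (such as $K_4$, $K_5$, $C_3$, $C_4$, $C_5$, $P_2$, $P_3$, $P_4$, $K_{1,3}$, $K_{1,4}$, $K_{2,3}$) can occur as a subgraph of $\mathscr{I}(G)$ only if $\mathscr{I}(G)$ has enough vertices and enough edges; and, conversely, once $\mathscr{I}(G)$ has a large complete subgraph or a moderately rich structure, all of these small graphs appear. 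So for most parts of the statement I would first isolate a short finite list of candidate groups and then, for each candidate, read off whether $X$ embeds from the known graph.

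First I would handle the ``small/degenerate'' items (7)--(9),(14),(15) and the $P_n$, $C_n$ parts. For $P_2$-freeness one needs $\mathscr{I}(G)$ to have no edge, i.e.\ to be totally disconnected; by \eqref{intersection graph eqn2},\eqref{e4},\eqref{e8} this holds exactly for $\mathbb Z_{p^2}$ ($\mathscr{I}=K_1$, no edge), $\mathbb Z_{pq}$, $\mathbb Z_p\times\mathbb Z_p$, $\mathbb Z_q\rtimes\mathbb Z_p$, and a quick scan of all other entries of Theorem~\ref{1000} (and the fact that non-planar cases contain $K_5$) shows no further group qualifies — this simultaneously proves (7), (9), and item (14) in one stroke, since a connected acyclic $\mathscr{I}(G)$ with at least one edge must be $K_{1,1}$ or $K_{1,2}$ which by \eqref{intersection graph e1} forces $\mathbb Z_{p^2}$ or $\mathbb Z_{p^3}$, and those are exactly the trees/stars/complete bipartite acyclic graphs. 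For $P_3$- and $P_4$-freeness I would run through the (finite, short) list of groups whose $\mathscr{I}(G)$ is small — precisely those appearing in Theorem~\ref{1000} — and check in each explicit graph whether a path on $3$ (resp.\ $4$) vertices exists; the groups with $\mathscr{I}(G)\cong\overline{K}_n$, $K_1$, $\overline{K}_2$, $K_2$, $K_{1,3}$ survive the $P_3$ test, and one extra family ($Q_8$, $A_4$, $\mathcal G_1$, etc.) survives the $P_4$ test because $K_4$, $K_{1,3}$ and $K_{1,3}\cup\overline K$ contain no $P_4$. Items (1)--(3) (unicyclic; $\mathscr{I}(G)\cong C_n$; $\mathscr{I}(G)\cong P_n$) then fall out: a unicyclic graph has exactly one cycle, so reading \eqref{intersection graph e1} gives $K_3$ (from $\mathbb Z_{p^4}$) and \eqref{intersection graph e2} gives $K_1+(K_2\cup K_1)$ (from $\mathbb Z_{p^2q}$) as the only unicyclic possibilities, and the $C_n$ / $P_n$ claims are immediate since the only cycle-graphs and path-graphs among all listed $\mathscr{I}(G)$ are $C_3=K_3=\mathscr{I}(\mathbb Z_{p^4})$ and $P_1=K_1=\mathscr{I}(\mathbb Z_{p^3})$.

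For the remaining ``$X$-free'' items (4),(5),(6),(10)--(13) I would argue uniformly: if $\mathscr{I}(G)$ is non-planar then it contains $K_5$ (Theorem~\ref{1000}(2)-type reasoning, already implicit in Propositions~\ref{intersecting graph t100}--\ref{intersecting graph t11}, which show every non-planar case contains $K_5$), hence contains $K_4$, $C_4$, $C_5$ (via $K_5\supseteq C_5$), $K_{2,3}$, $K_{1,4}$, and $P_4$ — so all these non-trivial-freeness properties force $G$ planar. Restricting to the planar list of Theorem~\ref{1000}, I then check each of the finitely many explicit graphs in \eqref{intersection graph e1}--\eqref{33} and Figure~\ref{int f1}: e.g.\ $C_4$ appears iff the graph has an induced or non-induced $4$-cycle, which happens precisely once the graph contains $K_4$ or $K_{2,3}$ or $C_4$ itself — $\mathbb Z_4\times\mathbb Z_2$, $M_8$, $\mathbb Z_6\times\mathbb Z_2$, $\mathbb Z_{pqr}$ get excluded for $C_4$ but $\mathbb Z_{p^2}\times\mathbb Z_p$-type $K_1+(K_{p+1}\cup\overline K_p)$ is excluded only for $p\ge\dots$, etc. The clique-type bound is the crucial quantitative tool for $K_4$-free ($\mathscr{I}(G)$ planar with no $K_4$: this kills $Q_8$, $M_8$, $\mathbb Z_4\times\mathbb Z_2$, $\mathbb Z_6\times\mathbb Z_2$, $\mathbb Z_{p^5}$ but keeps $\mathbb Z_{p^4}$ since $\mathscr{I}(\mathbb Z_{p^4})=K_3$) and for $K_{1,4}$-free and $K_{2,3}$-free one simply checks the maximum degree / the presence of a $K_{2,3}$ subgraph in each explicit planar graph. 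Finally, item (16) on the girth is a corollary of (4): a graph is $C_3$-free iff its girth is $>3$; combining the $C_3$-free classification from Theorem~\ref{intersecting graph t233}(2) (restated here as the list giving $girth=\infty$, since those $\mathscr{I}(G)$ are acyclic) with the fact that every other group has $C_3\subseteq\mathscr{I}(G)$ (shown in Propositions~\ref{intersecting graph t100}--\ref{intersecting graph t11}) gives $girth(\mathscr{I}(G))=3$ in all remaining cases.

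The main obstacle is purely combinatorial stamina, not conceptual difficulty: one must verify, for each of the roughly dozen explicit planar intersection graphs, which of the ten forbidden subgraphs it contains, and be careful that the lists are mutually consistent across items (e.g.\ that $\mathbb Z_q\rtimes_2\mathbb Z_{p^2}$, whose graph is $K_1+(K_1\cup qK_2)$ by \eqref{e9}, genuinely is $C_5$-free and $K_4$-free but not $C_3$-free, and similarly for the three exceptional families $\mathcal G_1,\mathcal G_2,\mathcal G_3$). I would present this as a single table-driven case check referencing \eqref{intersection graph e1}--\eqref{33} and Figure~\ref{int f1}, since writing each verification out longhand is routine once the table is fixed.
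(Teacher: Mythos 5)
Your overall strategy --- reduce to a finite list of candidate groups via a ``contains $K_5$'' observation and then read off each property from the explicit graphs \eqref{intersection graph e1}--\eqref{33} and the figures --- is exactly the route the paper takes, and your treatment of items (1)--(3), (7)--(9), (14) and the girth claim matches the paper's. The gap is in your reduction step for the remaining $X$-free items: you assert that ``if $\mathscr{I}(G)$ is non-planar then it contains $K_5$'' and accordingly restrict the case check to the planar list of Theorem~\ref{1000}. That implication is false, and the paper itself emphasizes this in the remark following the corollary (item (ii)): the group $\langle a,b,c \mid a^p=b^p=c^{qr}=1,\ ab=ba,\ cac^{-1}=b,\ cbc^{-1}=ab^{l}\rangle$ with $qr\mid(p+1)$, which appears in Theorem~\ref{intersecting graph t233}(1)(e) and is analysed in Proposition~\ref{intersecting graph t10}, has a $K_5$-free intersection graph (Figure~\ref{fig:int f111}) that nevertheless contains $K_{3,3}$ and so is not planar. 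Hence the class of groups with $K_5$-free intersection graph properly contains the class with planar intersection graph, and the correct reduction is to the $K_5$-free classification of Theorems~\ref{l1}(2) and \ref{intersecting graph t233}(1), not to Theorem~\ref{1000}.

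Concretely, your case analysis for items (4), (5), (6), (10), (11), (12) and (13) never examines this $p^2qr$ group, so the ``only if'' directions are not established for it: one must verify from Figure~\ref{fig:int f111} that its graph contains $C_4$, $C_5$, $P_4$, $K_{2,3}$, $K_4$, $K_{1,4}$ and $K_{1,3}$ --- which it does, e.g.\ $P$, $PQ$, $PR$ together with a common order-$p$ subgroup $H_1$ already give a $K_4$, and the $K_{3,p+1}$ structure on $\{P,PQ,PR\}$ versus the order-$p$ subgroups supplies the rest --- so that it is correctly excluded from those lists. Once the reduction is restated in terms of $K_5$-freeness and this one extra family is added to your table, the argument goes through and coincides with the paper's proof.
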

\begin{proof}
Note that if the intersection graph of subgroups of a group has $K_5$ as a subgraph, then it is none of the following graphs: unicyclic, cycle, path, claw-free,  $C_5$-free,   $C_4$-free, $P_4$-free, $P_3$-free,  $P_2$-free, totally disconnected, $K_{2,3}$-free, $K_4$-free, $K_{1,4}$-free.
So to classify the finite groups whose intersection graph of subgroups is one of
outerplanar, unicyclic, claw-free, path, cycle, $C_4$-free, $C_5$-free, totally disconnected, $P_2$-free, $P_3$-free, $P_4$-free, $K_{1,4}$-free,
$K_{2,3}$-free,
it is enough to consider the finite groups whose intersection graph of subgroups are $K_5$-free. Thus, we need to investigate these properties only for groups
given in Theorems~\ref{l1}(2) and \ref{intersecting graph t233}(1).

\noindent By Theorem~\ref{l1}(2) and using \eqref{intersection graph e1}, \eqref{intersection graph e2}, the only groups $G$ such that $\mathscr{I}(G)$  is unicyclic are $\mathbb Z_{p^4}$ and $\mathbb Z_{p^2q}$. By Theorem~\ref{intersecting graph t233}(1) and using \eqref{e4}-\eqref{33}, Figure~\ref{int f1}(b),~\ref{int f1}(c),~\ref{int f1}(d), Figure~\ref{fig:int f11}, there is no group $G$ such that $\mathscr{I}(G)$  is unicyclic. Thus, the proof of $(1)$ follows.

\noindent Proof of parts $(2)$ to $(14)$ of this Corollary are similar to the proof of part $(1)$.

If $\mathscr{I}(G)$ contains $C_3$, then obviously $girth(G)=3$. Now assume that $\mathscr{I}(G)$ is $C_3$-free.  If $G$ is cyclic, then by Theorem~\ref{l1}(4), $\mathscr{I}(G)$ is acyclic, since $\mathscr{I}(Z_{p^\alpha})\cong K_{p^{\alpha -1}}(\alpha = 2,3)$ and $\mathscr{I}(Z_{pq}) \cong \overline{K}_2$.  If $G$ is non-cyclic, then by Theorem~\ref{intersecting graph t233}(2),  $\mathscr{I}(G)$ is also acyclic. So  in both these cases $girth(G)=\infty$. This completes the proof.
\end{proof}

 \begin{rem}\normalfont

 \begin{enumerate}[\normalfont (i)]

 \item In Theorem~\ref{intersecting graph t12}, we show that all the projective-planar intersection graphs of subgroups of groups are toroidal, which is not the case for arbitrary graphs (e.g., see pp.\,367-368 and Figure~13.33 in \cite{KK2005}).

 \item In \cite{sel}, Selc¸uk Kayacan \emph{et al.} showed that the intersection graph of subgroups of groups other than those listed in Theorem~\ref{1000}, the group $G_2$ given in Case 1b in the proof of Proposition~\ref{intersecting graph t4}, and the group of order $p^2qr$ given in part (2) of Proposition~\ref{intersecting graph t8} contains $K_5$ as a subgraph. Also they showed that the intersection graph of subgroups of these two groups contains $K_{3,3}$ as a subgraph. In this paper, we proved by using another method that the intersection graph of subgroups of the first group contains $K_5$ as a subgraph, but not the second group. Thus it follows that the class of all groups having $K_5$-free intersection graph of subgroups properly contains the class of all groups having planar intersection graph of subgroups.
 \item In \cite{akbari_2} Akbari \emph{et al.} classified all groups whose intersection graphs of subgroups are one of $C_3$-free, tree, totally disconnected. Also they obtained the girth of the intersection graph of groups. In this paper, we proved these results for finite groups in a different method.
 \end{enumerate}
 \end{rem}

 In the next result we characterize some finite groups by using their intersection graph of subgroups.

\begin{cor}\label{intersecting graph c2}
Let $G$ be a group and $p, q, r$ are distinct primes. Then
\begin{enumerate}[\normalfont (1)]
\item $\mathscr{I}(G)\cong \mathscr{I}(M_8)$ if and only if $G\cong M_8$.
\item $\mathscr{I}(G)\cong \mathscr{I}(\mathbb Z_q\rtimes_2 \mathbb Z_{p^2})$ if and only if $G\cong \mathbb Z_q\rtimes_2 \mathbb Z_{p^2}$;
\item  $\mathscr{I}(G)\cong \mathscr{I}(\mathcal{G}_1)$ if and only if $G\cong \mathcal{G}_1$, where $\mathcal {G}_1$ is the group described in Theorem~\ref{1000}.
\item $\mathscr{I}(G)\cong \mathscr{I}(A_4)$ if and only if $G\cong A_4$;
\item $\mathscr{I}(G)\cong \mathscr{I}(\mathcal{G}_3)$ if and only if $G\cong \mathcal{G}_2$, where  $\mathcal {G}_2$ is the group described in Theorem~\ref{1000};
\item $\mathscr{I}(G)\cong \mathscr{I}(\mathcal{G}_2)$ if and only if $G\cong \mathcal{G}_3$, where  $\mathcal {G}_3$ is the group described in Theorem~\ref{1000};
\item $\mathscr{I}(G)\cong \mathscr{I}(G_1)$ if and only if $G\cong G_1$, where $G_1$ is the group of order $p^2qr$ described in Theorem~\ref{intersecting graph t233}(1).
\end{enumerate}
\end{cor}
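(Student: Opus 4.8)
The ``if'' direction of each equivalence is trivial --- isomorphic groups have isomorphic subgroup lattices, hence isomorphic intersection graphs --- so the plan is to prove, in each item, that $\mathscr{I}(G)\cong\mathscr{I}(H)$ forces $G\cong H$, and I would do this uniformly in two stages.

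The first stage is to confine $G$ to a short list. In items (1)--(6) the graph $\mathscr{I}(H)$ is planar (the groups $M_8$, $\mathbb Z_q\rtimes_2\mathbb Z_{p^2}$, $\mathcal G_1$, $A_4$, $\mathcal G_2$, $\mathcal G_3$ all appear in Theorem~\ref{1000}), so $\mathscr{I}(G)$ is planar too and Theorem~\ref{1000} forces $G$ to be isomorphic to one of the finitely many group types listed there. In item (7) the group of order $p^2qr$ has the $K_5$-free intersection graph drawn in Figure~\ref{fig:int f111} (which need not be planar --- it is already toroidal for $p=5$), so $\mathscr{I}(G)$ is $K_5$-free; then Theorem~\ref{intersecting graph t233}(1) together with Theorem~\ref{l1}(2) and Theorem~\ref{1000} (covering the cyclic case, where $K_5$-freeness and planarity coincide) confine $G$ to the explicit list of $K_5$-free group types.

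The second stage is to decide, for each target $H$, which of the group types on the relevant list actually have intersection graph isomorphic to $\mathscr{I}(H)$; all of these intersection graphs are written out explicitly in \eqref{intersection graph e1}--\eqref{33}, Figure~\ref{int f1} and Figure~\ref{fig:int f111}, so this is a finite comparison. Most candidates fall to coarse invariants: the number of vertices (equal to the number of proper subgroups), the number of connected components, the number of isolated vertices, the clique number, and --- where those do not separate --- the degree sequence or a direct look at Figure~\ref{int f1}. For example $\mathscr{I}(Q_8)\cong K_4$ by \eqref{23} is the unique four-vertex complete graph on the list; $\mathscr{I}(A_4)\cong K_{1,3}\cup\overline{K}_4$ by \eqref{e10} is the one that is disconnected with a single non-trivial component equal to a four-vertex star together with four isolated vertices; $\mathscr{I}(\mathcal G_1)\cong K_{1,p+1}\cup\overline{K}_{p^2}$ by \eqref{e11} has a unique non-trivial component which is a star, from whose number of leaves $p$ is recovered; $\mathscr{I}(\mathcal G_2)\cong K_1+(K_{1,p+1}\cup p^2K_2)$ by \eqref{33} is the cone over such a graph; $\mathscr{I}(\mathbb Z_q\rtimes_2\mathbb Z_{p^2})\cong K_1+(K_1\cup qK_2)$ by \eqref{e9} is a universal vertex joined to $q$ disjoint edges and one further vertex, with $q$ read off from the vertex count; and $\mathscr{I}(M_8)$ and $\mathscr{I}(\mathcal G_3)$ are matched against Figure~\ref{int f1}(c) and (d). The same inspection against Figure~\ref{fig:int f111} settles item (7). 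In every case $H$ turns out to be the only group type producing $\mathscr{I}(H)$, which gives the corollary.

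I expect the second stage to be where the real work lies: one must check, for each $H$, that \emph{no other} type on the classification yields an isomorphic graph, which amounts to simultaneously matching all the invariants above and recovering the prime parameters (and their arithmetic constraints, such as $q\mid(p+1)$, $q^2\mid(p+1)$ or $qr\mid(p+1)$) from the graph itself. Because the classifications involved list only finitely many types, each with an explicitly described intersection graph, this is ultimately a bounded, if somewhat tedious, case check rather than a conceptual difficulty.
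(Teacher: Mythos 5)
Your proposal takes essentially the same route as the paper's proof: both first confine $G$ to a finite classification list via a hereditary graph property (the paper invokes $K_5$-freeness and Theorem~\ref{intersecting graph t233}(1) uniformly, you invoke planarity and Theorem~\ref{1000} for items (1)--(6) and $K_5$-freeness for item (7)), and then argue that the explicitly computed intersection graphs in \eqref{intersection graph e1}--\eqref{33}, Figure~\ref{int f1} and Figure~\ref{fig:int f111} single out the target group among all types on that list. The one step you leave unstated, which the paper includes, is that $G$ must first be shown to be finite before either classification theorem can be applied; this is immediate because an infinite group has infinitely many proper subgroups, so its intersection graph cannot be isomorphic to any of these finite graphs.
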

\begin{proof} (1)-(7): By Theorem~\ref{intersecting graph t233}(1), we can see that the groups mentioned in this Corollary are having $K_5$-free intersection graph of subgroups. Also by
\eqref{e4}-\eqref{33}, Figure~\ref{int f1}(b)-(d), Figure~\ref{fig:int f11}, the intersection graphs of subgroups of these groups are unique. Since an infinite group has infinite number of subgroups, so its intersection graph of subgroups can not be isomorphic to any of the intersection graphs of subgroups of these groups.
If the intersection graph of a given finite group is isomorphic to any of these intersection graphs of subgroups of these groups, then by the uniqueness of these graphs, the given group must be isomorphic to that corresponding group. Proof of the result follows from this fact.
\end{proof}

\begin{thm}\label{intersection graph 300}
Let $G$ be a group. Then $\theta(\mathscr{I}(G))=m$, where $m$ is the number of prime order subgroups of groups of $G$.
\end{thm}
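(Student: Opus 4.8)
The plan is to establish the two inequalities $\theta(\mathscr{I}(G))\le m$ and $\theta(\mathscr{I}(G))\ge m$ separately: the first by exhibiting an explicit clique cover of size $m$, and the second by producing an independent set of size $m$ and invoking the elementary fact that $\theta(\Gamma)\ge\alpha(\Gamma)$ for every graph $\Gamma$.

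For the upper bound, let $P_1,\dots,P_m$ be the subgroups of $G$ of prime order, and for each $i$ put $\mathcal{C}_i:=\{\,H \mid H \text{ is a vertex of } \mathscr{I}(G) \text{ and } P_i\le H\,\}$. First I would note that each $\mathcal{C}_i$ is a clique: if $H,K\in\mathcal{C}_i$ then $P_i\le H\cap K$, so $H\cap K$ is nontrivial and $H,K$ are adjacent. Next, since $G$ is finite, every vertex $H$ of $\mathscr{I}(G)$ is a nontrivial finite subgroup, so by Cauchy's theorem it contains an element of prime order, hence a subgroup of prime order; that subgroup is one of the $P_i$, so $H\in\mathcal{C}_i$. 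Thus $\mathcal{C}_1,\dots,\mathcal{C}_m$ cover all vertices of $\mathscr{I}(G)$, whence $\theta(\mathscr{I}(G))\le m$.

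For the lower bound I would first check that $\{P_1,\dots,P_m\}$ is an independent set in $\mathscr{I}(G)$: if $i\ne j$, then $P_i\cap P_j$ is a subgroup of the prime-order group $P_i$ and cannot equal $P_i$ (otherwise $P_i\le P_j$ with $|P_i|=|P_j|$ prime would force $P_i=P_j$), so $P_i\cap P_j=\{e\}$ and $P_i,P_j$ are non-adjacent. Hence $\alpha(\mathscr{I}(G))\ge m$. Finally, for any graph $\Gamma$ and any clique cover $\{C_1,\dots,C_k\}$ of $\Gamma$, each $C_t$ meets a fixed independent set in at most one vertex, so $k$ is at least the size of that independent set; taking a maximum independent set gives $\theta(\Gamma)\ge\alpha(\Gamma)$. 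Therefore $\theta(\mathscr{I}(G))\ge\alpha(\mathscr{I}(G))\ge m$, and combining with the previous paragraph yields $\theta(\mathscr{I}(G))=m$.

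I do not anticipate a genuine obstacle: the single place that uses a hypothesis is the assertion that \emph{every} vertex contains a subgroup of prime order, which rests on finiteness via Cauchy's theorem (it would fail for $G=\mathbb{Z}$, where $\mathscr{I}(G)$ is complete but $m=0$). As a by-product the argument shows $\alpha(\mathscr{I}(G))=\theta(\mathscr{I}(G))=m$, so $\mathscr{I}(G)$ is weakly $\alpha$-perfect, which is presumably the content of the corollary to follow.
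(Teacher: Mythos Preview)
Your proof is correct and follows essentially the same strategy as the paper: the upper bound via the clique cover $\mathcal{C}_i=\{H:P_i\le H\}$ is identical, and your lower bound via the independent set $\{P_1,\dots,P_m\}$ together with the general inequality $\theta\ge\alpha$ is precisely what the paper's pigeonhole argument amounts to, just phrased more cleanly. One point worth noting: the paper asserts that ``a similar argument also works when $G$ is infinite,'' whereas you correctly flag that the covering step relies on every nontrivial subgroup containing a prime-order subgroup, which fails for $G=\mathbb{Z}$ (there $\mathscr{I}(G)$ is complete, so $\theta=1$, yet $m=0$); your caution here is well placed.
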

\begin{proof} Let $G$ be  finite and $|G|=p_1^{\alpha_1}p_2^{\alpha_2}\ldots p_k^{\alpha_k}$, where $p_i$'s are distinct primes, and $\alpha_i\geq 1$.
For each $i=1$, 2, $\ldots$, $k$, let $t_i$ be the number of subgroups of order $p_i$ and let $H(j,p_i)$, $j=1$, 2, $\ldots$, $t_i$ be a
subgroup of $G$
of order $p_i$. For each $j=1$, 2, $\ldots$, $t_i$, let $\mathcal S(j,p_i)$ be the set of all  subgroups of $G$ having $H(j,p_i)$ in common.
Clearly $\mathcal S(j, p_i)$ forms a clique in $\mathscr{I}(G)$ and $\mathcal S:=\{\mathcal S(j,p_i)~|~i=1$, 2, $\ldots$, $k$ and $j=1$, 2, $\ldots$, $t_i\}$
forms a clique cover of $\mathscr{I}(G)$ with  $|\mathcal S|=t_1+t_2+\ldots+t_k$. Therefore, $\Theta(\mathscr{I}(G))\leq |\mathcal S|$. Let $\mathcal{T}$ be a clique cover of $\mathscr{I}(G)$ such that $\Theta(\mathscr{I}(G))=|\mathcal{T}|$. If $|\mathcal{T}|<|\mathcal S|$, then by pigeonhole principle, $\mathcal{T}$ has a clique which contains atleast two subgroups, say $H(j,p_i)$, $H(l,p_r)$, for some $i\neq r$,
$j\in \{1$, 2, $\ldots$, $t_i\}$, $l\in \{1$, 2, $\ldots$, $t_r\}$, which is not possible, since $H(j,p_i)$ and $H(l,p_r)$ are not adjacent in $\mathscr{I}(G)$. So $|\mathcal{T}|=|\mathcal S|=$ the number of prime order subgroups of $G$. A similar argument also works when $G$ is infinite.
\end{proof}

Zelinka shown in \cite{zelinka} that for any group $G$, $\alpha(\mathscr(G))=m$, where $m$ is the number of prime order proper subgroups of $G$. This fact together with Theorem~\ref{intersection graph 300} gives the following result.
\begin{cor}\label{124}
If $G$ is a group, then $\mathscr{I}(G)$ is weakly $\alpha$-perfect.
\end{cor}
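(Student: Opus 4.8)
The plan is to deduce the statement immediately by combining the two explicit evaluations of the relevant parameters that are already at hand. Theorem~\ref{intersection graph 300} computes $\theta(\mathscr{I}(G))$ as the number of subgroups of $G$ of prime order, while Zelinka's result in~\cite{zelinka} computes $\alpha(\mathscr{I}(G))$ as the number of \emph{proper} subgroups of $G$ of prime order. Since for every graph one has $\alpha \leq \theta$ --- each cell of a clique cover meets a fixed independent set in at most one vertex, so any clique cover has at least $\alpha$ cells --- it is enough to verify that these two counts coincide, and then $\alpha(\mathscr{I}(G)) = \theta(\mathscr{I}(G))$, i.e.\ $\mathscr{I}(G)$ is weakly $\alpha$-perfect.

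First I would note that a subgroup of prime order is automatically proper in $G$ unless $G$ itself has prime order. Hence whenever $|G|$ is not prime the prime-order subgroups of $G$ are exactly its prime-order proper subgroups, so the numbers delivered by Theorem~\ref{intersection graph 300} and by~\cite{zelinka} are literally equal and the corollary follows. The degenerate cases $|G| = 1$ and $|G|$ prime I would treat separately: there $G$ has no proper nontrivial subgroup, so $\mathscr{I}(G)$ is the null graph and $\alpha(\mathscr{I}(G)) = \theta(\mathscr{I}(G)) = 0$. The same dichotomy applies unchanged when $G$ is infinite, since the argument only uses the correspondence between the clique-cover cells of $\mathscr{I}(G)$ and the prime-order subgroups set up in the proof of Theorem~\ref{intersection graph 300}.

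There is no genuinely hard step: the whole content is carried by the two cited results, and the only care needed is the bookkeeping of when a prime-order subgroup counts as proper, which is exactly where the harmless cases $|G| = 1$ and $|G|$ prime come in. It is worth stressing which direction is the substantive one: $\alpha(\mathscr{I}(G)) \leq \theta(\mathscr{I}(G))$ is automatic for any graph, so weak $\alpha$-perfectness amounts to $\theta(\mathscr{I}(G)) \leq \alpha(\mathscr{I}(G))$, an inequality that fails for general graphs and holds here only because the clique cover $\mathcal{S}$ constructed in Theorem~\ref{intersection graph 300} has exactly as many cells as Zelinka's maximum independent set has vertices.
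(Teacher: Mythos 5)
Your proposal is correct and is essentially the paper's own argument: the authors likewise obtain the corollary by combining Theorem~\ref{intersection graph 300} with Zelinka's computation of $\alpha(\mathscr{I}(G))$ as the number of prime-order proper subgroups. Your extra bookkeeping about the degenerate case where $|G|$ is prime (so that ``prime-order subgroup'' and ``prime-order proper subgroup'' could differ) is a small point the paper glosses over, but it does not change the route.
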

\section*{Acknowledgement}
The authors would like to thank Professor Ronald Solomon, Department of Mathematics, Ohio State University,  Columbus,  United States for his valuable suggestions to complete this work.

\end{document}